\newtheorem{theorem}{Theorem}[section]
\newtheorem{proposition}[theorem]{Proposition}
\newtheorem{lemma}[theorem]{Lemma}
\newtheorem{corollary}[theorem]{Corollary}
\theoremstyle{remark}
\newtheorem*{remark}{Remark}
\newtheorem{example}{Example}
\newtheorem*{definition}{Definition}
\def\@eqnnum{(\theequation)}
\numberwithin{equation}{section}
\begin{document}

\title[]
{On the $\Gamma$-equivalence of binary quadratic forms}

\author{Bumkyu Cho}
\address{Department of Mathematics, Dongguk University-Seoul, 30 Pildong-ro 1-gil, Jung-gu, Seoul, 04620, Republic of Korea}

\email{bam@dongguk.edu}

\subjclass[2010]{Primary 11F37; Secondary 11F27, 11F50}

\thanks{The author was supported by NRF-2015R1C1A1A02037526 and the Dongguk University Research Fund of 2017.}

\keywords{}

\dedicatory{}

\begin{abstract}
For a congruence subgroup $\Gamma$, we define the notion of $\Gamma$-equivalence on binary quadratic forms which is the same as proper equivalence if $\Gamma = \mathrm{SL}_2(\mathbb Z)$. We develop a theory on $\Gamma$-equivalence such as the finiteness of $\Gamma$-reduced forms, the isomorphism between $\Gamma_0(N)$-form class group and the ideal class group, $N$-representation of integers, and $N$-genus of binary quadratic forms. As an application, we deal with representations of integers by binary quadratic forms under certain congruence condition on variables.
\end{abstract}

\maketitle

\section{Introduction}

\begin{definition}
Let $Q(x, y)$, $Q'(x, y)$ be two binary quadratic forms with coefficients in $\mathbb Z$ and let $\Gamma$ be a congruence subgroup of $\mathrm{SL}_2(\mathbb Z)$. If $Q'(x, y) = Q(ax + by, cx + dy)$ for some $(\begin{smallmatrix} a & b \\ c & d \end{smallmatrix}) \in \Gamma$, then we say that $Q'(x, y)$ is $\Gamma$-equivalent to $Q(x, y)$.
\end{definition}

The $\mathrm{SL}_2(\mathbb Z)$-equivalence is clearly the same as the usual proper equivalence. The purpose of this article is to develop a theory of $\Gamma$-equivalence just as the theory of proper equivalence. To do this, we mainly follow the course given in Cox's book \cite{Cox}.

\medskip

The beginning of this work is based on the following observation. Suppose that $Q(x, y)$ and $Q'(x, y)$ are $\Gamma$-equivalent.

i) If $\Gamma = \Gamma_0(N)$, then
\[
\{ Q(x, y) \in \mathbb Z \, | \, x, y \in \mathbb Z, \, y \equiv 0 \, (\bmod \, N) \} \, = \, \{ Q'(x, y) \in \mathbb Z \, | \, x, y \in \mathbb Z, \, y \equiv 0 \, (\bmod \, N) \}.
\]

ii) If $\Gamma = \Gamma_1(N)$ and $r \in \mathbb Z$, then
\[
\{ Q(x, y) \in \mathbb Z \, | \, x \equiv r \, (\bmod \, N), \, y \equiv 0 \, (\bmod \, N) \} \, = \, \{ Q'(x, y) \in \mathbb Z \, | \, x \equiv r \, (\bmod \, N), \, y \equiv 0 \, (\bmod \, N) \}.
\]

iii) If $\Gamma = \Gamma(N)$ and $r, s \in \mathbb Z$, then
\[
\{ Q(x, y) \in \mathbb Z \, | \, x \equiv r \, (\bmod \, N), \, y \equiv s \, (\bmod \, N) \} \, = \, \{ Q'(x, y) \in \mathbb Z \, | \, x \equiv r \, (\bmod \, N), \, y \equiv s \, (\bmod \, N) \}.
\]

In Section 3, we deal with representations of integers by binary quadratic forms under the congruence condition given in Case i) above.

\section{The $\Gamma$-equivalence of binary quadratic forms}

We denote by $\mathrm{QF}$ the set of all primitive positive-definite binary quadratic forms, and denote by $\mathrm{QT}$ the set of all points $\tau \in \mathbb H$ such that $[\mathbb Q(\tau) : \mathbb Q] = 2$. Then a congruence subgroup $\Gamma$ of $SL_2(\mathbb Z)$ acts on both $\mathrm{QF}$ and $\mathrm{QT}$ in a natural way as
\begin{eqnarray*}
Q \cdot \gamma & = & (x \ y) \tensor*[^t]\gamma{} A_Q \gamma \left(\begin{smallmatrix} x \\ y \end{smallmatrix}\right), \\
\gamma(\tau) & = & \frac{a\tau + b}{c \tau + d},
\end{eqnarray*}
where $Q \in \mathrm{QF}$, $\gamma = \left(\begin{smallmatrix} a & b \\ c & d \end{smallmatrix}\right) \in \Gamma$, $\tau \in \mathrm{QT}$, and $A_Q$ is a symmetric matrix associated with $Q$. By definition, a form $Q'$ is $\Gamma$-equivalent to a form $Q$ if and only if $Q' = Q\cdot \gamma$ for some $\gamma \in \Gamma$.

Given a form $Q \in \mathrm{QF}$, let $\tau_Q$ denote the unique solution of $Q(x, 1) = 0$ in $\mathbb H$. Then it is not difficult to see that the map defined by $Q \mapsto \tau_Q$ is a one-to-one correspondence between $\mathrm{QF}$ and $\mathrm{QT}$, and moreover
\[ \tau_{Q \cdot \gamma} \, = \, \gamma^{-1}(\tau_Q). \]

\begin{definition}
Let $\mathfrak F_\Gamma$ be a fundamental region for a congruence subgroup $\Gamma$.

(1) A form $Q \in \mathrm{QF}$ is said to be $\Gamma$-reduced if $\tau_Q \in \mathfrak F_\Gamma$. The set of all $\Gamma$-reduced forms in $\mathrm{QF}$ is denoted by $\Gamma\text{-}\mathrm{RF}$.

(2) Given a negative integer $D \equiv 0, 1 \pmod 4$, we denote by $\mathrm{QF}(D)$ the set of all forms in $\mathrm{QF}$ of discriminant $D$, and denote by $\Gamma\text{-}\mathrm{RF}(D)$ the set of all $\Gamma$-reduced forms in $\mathrm{QF}(D)$.
\end{definition}

It is worthwhile to remark that the definition of $\Gamma$-reduced forms depends on the choice of the fundamental region $\mathfrak F_\Gamma$.

\begin{example}
As is well known, a fundamental region for $\mathrm{SL}_2(\mathbb Z)$ is
\begin{eqnarray*}
\mathfrak F_{\mathrm{SL}_2(\mathbb Z)} & = & \big\{ \tau \in \mathbb H \, \big| \, |\mathrm{Re}(\tau)| \leq 1/2, \ |\tau| \geq 1, \\
& & \qquad \quad \ \ \big( |\mathrm{Re}(\tau)| = 1/2 \mbox{ or } |\tau| = 1 \big) \Longrightarrow \mathrm{Re}(\tau) \leq 0 \big\}.
\end{eqnarray*}
With respect to this choice of $\mathfrak F_{\mathrm{SL}_2(\mathbb Z)}$, a form $ax^2 + bxy + cy^2 \in \mathrm{QF}$ is $\mathrm{SL}_2(\mathbb Z)$-reduced if and only if it satisfies

(1) $|b| \leq a \leq c$

(2) $(|b| = a \mbox{ or } a = c) \Longrightarrow b \geq 0$.\\
Notice that it coincides with the usual definition of reduced forms for the proper equivalence. We comment that this relation between reduced forms and fundamental domains is not new to mathematicians (see \cite[Section 5.3.1]{Coh} and \cite[Exercise 11.4]{Cox}).
\end{example}

\begin{example}
For $p = 2, 3$ it is not difficult to see that
\begin{eqnarray*}
\mathfrak F_{\Gamma_0(p)} & = & \big\{ \tau \in \mathbb H \, \big| \, |\mathrm{Re}(\tau)| \leq 1/2, \ |\tau - 1/p| \geq 1/p, \ |\tau + 1/p| \geq 1/p, \\
& & \qquad \quad \ \ \big( |\mathrm{Re}(\tau) = 1/2| \mbox{ or } |\tau - 1/p| = 1/p \mbox{ or } |\tau + 1/p| = 1/p \big) \Longrightarrow \mathrm{Re}(\tau) < 0 \big\}.
\end{eqnarray*}
Therefore $ax^2 + bxy + cy^2$ is defined to be $\Gamma_0(p)$-reduced if

(1) $|b| \leq a$, $|b| \leq pc$

(2) $(|b| = a \mbox{ or } |b| = pc) \Longrightarrow b > 0$.\\
Under this defintion, we easily compute that
\begin{eqnarray*}
\Gamma_0(2)\text{-}\mathrm{RF}(-3) & = & \{ x^2 + xy + y^2 \}, \\
\Gamma_0(2)\text{-}\mathrm{RF}(-4) & = & \{ x^2 + y^2, \ 2x^2 + 2xy + y^2 \}, \\
\Gamma_0(2)\text{-}\mathrm{RF}(-7) & = & \{ x^2 + xy + 2y^2, \ 2x^2 \pm xy + y^2 \}, \\
\Gamma_0(2)\text{-}\mathrm{RF}(-8) & = & \{ x^2 + 2y^2, \ 2x^2 + y^2, \ 3x^2 + 2xy + y^2 \},
\end{eqnarray*}
and so on.
\end{example}

\begin{remark}
The reader will see soon that the coefficient of $x^2$ must be relatively prime to $N$ in order for $\Gamma_0(N)\text{-}\mathrm{RF}(D)$ to be an abelian group under the Dirichlet composition.
\end{remark}

\begin{example}
Let $p \geq 5$ be a prime number. Then $ax^2 + bxy + cy^2 \in \mathrm{QF}$ is defined to be $\Gamma_0(p)$-reduced if it satisfies the following properties:

\begin{enumerate}
\item $|b| \leq a$

\item $|b| \leq \frac{p^2c + (k^2 - 1)a}{p \cdot |k|}$ for all $k \in S_p$

\item $|b| = a \ \Longrightarrow \ b = a$

\item $b \neq -pc$

\item $b = - \frac{p^2c + (k^2 - 1)a}{pk}$ for some $k \in E_p^{(2)}$ $\Longrightarrow$ $b \geq -\frac{2ka}{p}$

\item $b = - \frac{p^2c + (k^2 - 1)a}{pk}$ for some $k \in S_p - (\{ \pm 1 \} \cup E_p^{(2)})$ $\Longrightarrow$ $b \geq -\frac{(2k_{(2)} + 1)a}{p}$

\item $b^2 - 4ac = - \frac{3a^2}{p^2}$ $\Longrightarrow$ $b \neq \frac{(1 - 2k)a}{p}$ for all $k \in S_p - (\{ 1 \} \cup E_p^{(3)})$ with $k \neq k_{(3)}$,
\end{enumerate}
where the definitions of $S_p$, $E_p^{(2)}$, $k_{(2)}$, $E_p^{(3)}$, and $k_{(3)}$ are given in Section 4. The definition of $\Gamma_0(p)$-reduction above corresponds to the fundamental region for $\Gamma_0(p)$ described explicitly in Section 4.

From this definition, we can directly deduce that there are at most finitely many $\Gamma_0(p)$-reduced forms of a given discriminant $D$. In fact, it will turn out that the conditions (1) and (2) are good enough to imply the finiteness of $\Gamma_0(p)\text{-}\mathrm{RF}(D)$. Note that
\begin{eqnarray*}
& & |b| \leq \frac{p^2c + (k^2 - 1)a}{p \cdot |k|} \ \mbox{ for all } k \in S_p \\
& \Longrightarrow & \big|\tau_Q - k/p\big| \geq 1/p \ \mbox{ for all } k \in S_p, \mbox{ where } \tau_Q = (-b + \sqrt{D})/2a \\
& \Longrightarrow & \mathrm{Im}(\tau_Q) \geq \sqrt{3}/2 \mbox{ or } \big(|\mathrm{Re}(\tau)| \leq 1/2p, \ |\tau_Q - 1/p| \geq 1/p, \ |\tau_Q + 1/p| \geq 1/p \big) \\
& \Longrightarrow & a \leq \sqrt{-D/3} \mbox{ or } \big(|b| \leq a/p, \ |b| \leq pc \big).
\end{eqnarray*}

Fix a negative discriminant $D$.

\medskip

\underline{Case 1} $|b| \leq a$, $a \leq \sqrt{-D/3}$

Clearly, there are at most finitely many triplets $(a, b, c)$ of integers satisfying these conditions.

\medskip

\underline{Case 2} $|b| \leq a/p$, $|b| \leq pc$

Since $b^2 = |b| \cdot |b| \leq \frac{a}{p} \cdot pc = ac$, the finiteness follows immediately from the fact that
\begin{eqnarray*}
3b^2 \leq 4ac - b^2 = -D & \Longrightarrow & |b| \leq \sqrt{-D/3}, \\
4ac = b^2 - D \leq -4D/3 & \Longrightarrow & ac \leq -D/3.
\end{eqnarray*}

\end{example}

We now prove the finiteness of $\Gamma$-reduced forms of a given discriminant for all $\Gamma$.

\begin{theorem}
Let $\Gamma\text{-}\mathrm{RF}(D)$ be the set of all $\Gamma$-reduced forms of discriminant $D$ with respect to $\mathfrak F_\Gamma$. Then we have the following.

(1) Any two distinct $\Gamma$-reduced forms in $\Gamma\text{-}\mathrm{RF}(D)$ are not $\Gamma$-equivalent.

(2) Every form $Q \in \mathrm{QF}(D)$ is $\Gamma$-equivalent to a unique $\Gamma$-reduced form in $\Gamma\text{-}\mathrm{RF}(D)$.

(3) The cardinality of $\Gamma\text{-}\mathrm{RF}(D)$ is independent of the choice of $\mathfrak F_\Gamma$.

(4) The cardinality $|\Gamma\text{-}\mathrm{RF}(D)|$ is finite.
\end{theorem}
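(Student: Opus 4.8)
The plan is to reduce everything to the bijection $Q \mapsto \tau_Q$ between $\mathrm{QF}$ and $\mathrm{QT}$ together with its equivariance property $\tau_{Q\cdot\gamma} = \gamma^{-1}(\tau_Q)$, and to invoke the defining property of a fundamental region: every $\Gamma$-orbit in $\mathbb H$ meets $\mathfrak F_\Gamma$ in exactly one point (the careful boundary conditions in the preceding examples are precisely what make $\mathfrak F_\Gamma$ a strict set of orbit representatives). With this in hand, parts (1)--(3) become essentially formal, while part (4) is the only step needing genuine input from the classical theory, namely the finiteness of the form class number $h(D)$ and of the index $[\mathrm{SL}_2(\mathbb Z):\Gamma]$.

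For (1), I would suppose $Q, Q' \in \Gamma\text{-}\mathrm{RF}(D)$ are $\Gamma$-equivalent, say $Q' = Q\cdot\gamma$. Then $\tau_{Q'} = \gamma^{-1}(\tau_Q)$ lies in the same $\Gamma$-orbit as $\tau_Q$, and both $\tau_Q, \tau_{Q'} \in \mathfrak F_\Gamma$; the fundamental-region property forces $\tau_Q = \tau_{Q'}$, whence $Q = Q'$ by injectivity of $Q\mapsto\tau_Q$, so distinct $\Gamma$-reduced forms cannot be $\Gamma$-equivalent. For (2), given $Q \in \mathrm{QF}(D)$, the orbit of $\tau_Q$ meets $\mathfrak F_\Gamma$ in a unique point $\tau_0 = \gamma^{-1}(\tau_Q)$ for some $\gamma \in \Gamma$; since $\mathrm{SL}_2(\mathbb Z)$ preserves $\mathrm{QT}$ we have $\tau_0 \in \mathrm{QT}$, so $\tau_0 = \tau_{Q'}$ with $Q' = Q\cdot\gamma$. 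As $\gamma$ preserves the discriminant, $Q' \in \mathrm{QF}(D)$, and $\tau_{Q'} \in \mathfrak F_\Gamma$ makes $Q'$ a $\Gamma$-reduced form $\Gamma$-equivalent to $Q$; its uniqueness is immediate from (1). Part (3) then follows because (1) and (2) say exactly that $Q \mapsto$ (its $\Gamma$-reduced representative) induces a bijection from the set of $\Gamma$-equivalence classes in $\mathrm{QF}(D)$ onto $\Gamma\text{-}\mathrm{RF}(D)$; since the former set is defined with no reference to $\mathfrak F_\Gamma$, the cardinality $|\Gamma\text{-}\mathrm{RF}(D)|$ equals the number of $\Gamma$-classes and is independent of the chosen fundamental region.

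For (4), which I expect to be the main obstacle, it suffices by (3) to bound the number of $\Gamma$-equivalence classes in $\mathrm{QF}(D)$. I would fix a form $Q$, set $H = \{h \in \mathrm{SL}_2(\mathbb Z) : Q\cdot h = Q\}$, and identify its $\mathrm{SL}_2(\mathbb Z)$-orbit with the coset space $H\backslash\mathrm{SL}_2(\mathbb Z)$ via $Hg \mapsto Q\cdot g$. The residual right action of $\Gamma$, sending $Q\cdot g$ to $Q\cdot(g\gamma)$, corresponds to right multiplication $Hg \mapsto Hg\gamma$, so the $\Gamma$-orbits inside a single $\mathrm{SL}_2(\mathbb Z)$-orbit are precisely the double cosets $H\backslash\mathrm{SL}_2(\mathbb Z)/\Gamma$, of which there are at most $[\mathrm{SL}_2(\mathbb Z):\Gamma]$. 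This index is finite because $\Gamma$ is a congruence subgroup, and $\mathrm{QF}(D)$ consists of exactly $h(D) < \infty$ proper ($\mathrm{SL}_2(\mathbb Z)$-)equivalence classes, so the total number of $\Gamma$-classes is at most $[\mathrm{SL}_2(\mathbb Z):\Gamma]\cdot h(D) < \infty$. The only delicate point here is keeping the left- versus right-action conventions straight in the double-coset count; once that bookkeeping is fixed, the argument is purely formal.
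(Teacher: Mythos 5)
Your parts (1)--(3) are essentially the paper's own argument: the injectivity-via-$\tau_Q$ proof of (1), the existence-of-a-representative proof of (2), and the bijection-with-$\Gamma$-classes proof of (3) (the paper phrases (3) as an injection $f:\Gamma\text{-}\mathrm{RF}(D)\to\Gamma\text{-}\mathrm{RF}(D)'$ plus symmetry, which is the same content as your ``both sets biject with the set of $\Gamma$-equivalence classes''). Where you genuinely diverge is (4). The paper picks coset representatives $\gamma_1,\ldots,\gamma_n$ of $\Gamma$ in $\mathrm{SL}_2(\mathbb Z)$, \emph{chooses} a fundamental region $\mathfrak F_\Gamma\subset\bigcup_{i=1}^n\gamma_i^{-1}(\mathfrak F_{\mathrm{SL}_2(\mathbb Z)})$, and shows directly that every $\Gamma$-reduced form for that choice is $Q'\cdot\gamma_i$ for some $\mathrm{SL}_2(\mathbb Z)$-reduced $Q'$, giving $|\Gamma\text{-}\mathrm{RF}(D)|\leq n\cdot|\mathrm{SL}_2(\mathbb Z)\text{-}\mathrm{RF}(D)|$; finiteness for an arbitrary $\mathfrak F_\Gamma$ then follows from (3). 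You instead never touch the fundamental region in (4): you count $\Gamma$-classes abstractly via orbit--stabilizer, identifying the $\mathrm{SL}_2(\mathbb Z)$-orbit of $Q$ with $H\backslash\mathrm{SL}_2(\mathbb Z)$ and the $\Gamma$-orbits inside it with double cosets $H\backslash\mathrm{SL}_2(\mathbb Z)/\Gamma$. Your bookkeeping is correct: for the right action, $Q\cdot g = Q\cdot g'$ iff $g'\in Hg$, the residual action $Hg\mapsto Hg\gamma$ has orbits $Hg\Gamma$, and each double coset is a nonempty union of left cosets $g\Gamma$, so there are at most $[\mathrm{SL}_2(\mathbb Z):\Gamma]$ of them, yielding the bound $h(D)\cdot[\mathrm{SL}_2(\mathbb Z):\Gamma]$ --- the same bound the paper obtains. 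What each route buys: the paper's argument is concrete and exhibits the reduced forms explicitly as translates $Q'\cdot\gamma_i$, but it rests on the unverified (standard, yet not proved in the paper) assertion that $\mathfrak F_\Gamma$ \emph{can} be chosen inside that union of translates, and it needs (3) to transfer finiteness to other choices; your argument is choice-free, needs only the classical finiteness of $h(D)$ (the same classical input as the paper's $|\mathrm{SL}_2(\mathbb Z)\text{-}\mathrm{RF}(D)|<\infty$) together with the bijection from (3), and is arguably the cleaner logical structure, at the cost of being less explicit about what the reduced forms look like.
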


\begin{proof}
(1) Suppose that $Q, Q' \in \Gamma\text{-}\mathrm{RF}(D)$ are $\Gamma$-equivalent to each other. Since $Q' = Q \cdot \gamma$ for some $\gamma \in \Gamma$, we have $\tau_{Q'} = \gamma^{-1}(\tau_Q)$. This says that $\tau_Q, \tau_{Q'} \in \mathfrak F_\Gamma$ are in the same $\Gamma$-orbit, from which we deduce that $\tau_Q = \tau_{Q'}$. Appealing to the one-to-one correspondence between $\mathrm{QF}$ and $\mathrm{QT}$, it follows that $Q =Q'$.

(2) Given a form $Q \in \mathrm{QF}(D)$, there exists $\gamma \in \Gamma$ such that $\tau_{Q \cdot \gamma} = \gamma^{-1}(\tau_Q)$ is contained in $\mathfrak F_\Gamma$. By definition, $Q$ is $\Gamma$-equivalent to $Q \cdot \gamma \in \Gamma\text{-}\mathrm{RF}(D)$. The uniqueness is an immediate consequence of (1).

(3) Let $\Gamma\text{-}\mathrm{RF}(D)$ and $\Gamma\text{-}\mathrm{RF}(D)'$ be the sets of $\Gamma$-reduced forms in $\mathrm{QF}(D)$ with respect to $\mathfrak F_\Gamma$ and $\mathfrak F_\Gamma'$, respectively. According to (2) there is a function $f:\Gamma\text{-}\mathrm{RF}(D) \rightarrow \Gamma\text{-}\mathrm{RF}(D)'$ by mapping $Q$ to a unique $Q'$ in $\Gamma\text{-}\mathrm{RF}(D)'$ which is $\Gamma$-equivalent to $Q$. Note that $f$ is injective by (1). For if $f(Q_1) = f(Q_2)$, then $Q_1, Q_2 \in \Gamma\text{-}\mathrm{RF}(D)$ are $\Gamma$-equivalent, so we have $Q_1 = Q_2$. Now the symmetry immediately implies our desired assertion.

(4) Let $\gamma_1, \ldots, \gamma_n$ be all the distinct left coset representatives of $\Gamma$ in $\mathrm{SL}_2(\mathbb Z)$, where $n = [\mathrm{SL}_2(\mathbb Z) : \Gamma]$. Then we can choose $\mathfrak F_\Gamma$ so that $\mathfrak F_\Gamma$ is contained in
\[ \bigcup_{i=1}^n \gamma_i^{-1}(\mathfrak F_{\mathrm{SL}_2(\mathbb Z)}). \]
Under this choice of $\mathfrak F_\Gamma$ we have
\begin{eqnarray*}
\Gamma\text{-}\mathrm{RF}(D) & = & \{ Q \in \mathrm{QF}(D) \, | \, \tau_Q \in \mathfrak F_\Gamma \} \\
& \subset & \{ Q \in \mathrm{QF}(D) \, | \, \gamma_i(\tau_Q) \in \mathfrak F_{\mathrm{SL}_2(\mathbb Z)} \mbox{ for some } i = 1, \ldots, n \} \\
& = & \{ Q \in \mathrm{QF}(D) \, | \, Q \cdot \gamma_i^{-1} \mbox{ is $\mathrm{SL}_2(\mathbb Z)$-reduced for some } i = 1, \ldots, n \} \\
& = & \bigcup_{i=1}^n \{ Q' \cdot \gamma_i \in \mathrm{QF}(D) \, | \, Q' \in \mathrm{SL}_2(\mathbb Z)\text{-}\mathrm{RF}(D) \}.
\end{eqnarray*}
Because $|\mathrm{SL}_2(\mathbb Z)\text{-}\mathrm{RF}(D)|$ is finite, we can derive that $|\Gamma\text{-}\mathrm{RF}(D)|$ is also finite.
\end{proof}

For a negative discriminant $D$ and a positive integer $N$ we define
\[ \mathcal C(D, \Gamma_0(N)) \ = \ \{ ax^2 + bxy + cy^2 \in \mathrm{QF}(D) \, | \, \gcd(a, N) = 1 \} / \sim \]
where $\sim$ denotes the $\Gamma_0(N)$-equivalence. We simply denote $\mathcal C(D, \Gamma_0(1))$ by $\mathcal C(D)$ in accordance with standard notation.

Given an imaginary quadratic field $K$, let $d_K$ denote the discriminant of $K$, and $\mathcal O_K = \mathbb Z + \mathbb Z \frac{d_K + \sqrt{d_K}}{2}$ the ring of integers in $K$, and $\mathcal O := \mathbb Z + f\mathcal O_K$ an order of conductor $f$ in $K$. Then $\mathcal O$ has discriminant $D := f^2 d_K$. Every fractional $\mathcal O$-ideal is of the form $\alpha \mathfrak a$ for some $\alpha \in K^\ast$ and some $\mathcal O$-ideal $\mathfrak a$, and vice versa. A fractional $\mathcal O$-ideal $\mathfrak a$ is said to be proper if it satisfies
\[
\mathcal O \ = \ \{ \alpha \in K \, | \, \alpha \mathfrak a \subset \mathfrak a  \}.
\]
Then it is easy to see that all principal $\mathcal O$-ideals and all $\mathcal O_K$-ideals are proper. An $\mathcal O$-ideal $\mathfrak a$ is said to be relatively prime to $N \in \mathbb N$ if $\mathfrak a + N\mathcal O = \mathcal O$. Then we see from the proof of \cite[Lemma 7.18]{Cox} that $\mathfrak a$ is relatively prime to $N$ if and only if its norm $\mathrm{N}(\mathfrak a) := |\mathcal O / \mathfrak a|$ is relatively prime to $N$, and that every $\mathcal O$-ideal relatively prime to $f$ is proper.

Let $\mathcal I(\mathcal O)$ denote the group of all proper fractional $\mathcal O$-ideals, and $\mathcal I(\mathcal O, N)$ the subgroup of $\mathcal I(\mathcal O)$ generated by all proper $\mathcal O$-ideals relatively prime to $N$. We also denote by $\mathcal P(\mathcal O, N)$ the subgroup of $\mathcal I(\mathcal O)$ generated by all principal $\mathcal O$-ideals that are relativel prime to $N$, and we denote by $\mathcal P_0(\mathcal O, N)$ the subgroup of $\mathcal I(\mathcal O)$ generated by all principal ideals $\alpha \mathcal O$ where $\alpha \in \mathcal O$, $\alpha \equiv a \ (\bmod \ N\mathcal O)$ for some $a \in \mathbb Z$ with $(a, N) = 1$. Then $\mathcal P_0(\mathcal O, N) \subset \mathcal P(\mathcal O, N) \subset \mathcal I(\mathcal O, N)$ and hence we may consider the quotient groups
\begin{eqnarray*}
\mathcal C(\mathcal O, N) & := & \mathcal I(\mathcal O, N)/\mathcal P(\mathcal O, N), \\
\mathcal C_0(\mathcal O, N) & := & \mathcal I(\mathcal O, N)/\mathcal P_0(\mathcal O, N).
\end{eqnarray*}
We employ the standard notation $\mathcal C(\mathcal O)$ and $\mathcal P(\mathcal O)$ instead of $\mathcal C(\mathcal O, 1)$ and $\mathcal P(\mathcal O, 1)$, respectively. It is trivial that $\mathcal I(\mathcal O) = \mathcal I(\mathcal O, 1)$, $\mathcal P(\mathcal O) = \mathcal P_0(\mathcal O, 1)$, and $\mathcal C(\mathcal O) = \mathcal C_0(\mathcal O, 1)$. Note also that $\mathcal I(\mathcal O_K, N)$ and $\mathcal P_0(\mathcal O_K, N)$ are equal to the standard notation $\mathcal I_K(N)$ and $\mathcal P_{K, \mathbb Z}(N)$ in Cox's book, respectively.

\begin{proposition}\label{Proposition - bijection}
If $\mathcal O$ is the order of discriminant $D$ in an imaginary quadratic field, then the map
\[
\Phi: \mathcal C(D, \Gamma_0(N)) \longrightarrow \mathcal C_0(\mathcal O, N)
\]
taking $[Q]$ to $[\mathbb Z + \mathbb Z \tau_Q]$ is a bijection.
\end{proposition}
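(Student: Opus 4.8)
The plan is to build everything on the classical lattice dictionary (Cox, Lemma 7.5): for a form $Q = ax^2+bxy+cy^2\in\mathrm{QF}(D)$ the module $\mathbb Z+\mathbb Z\tau_Q$ is a proper fractional $\mathcal O$-ideal, and $\mathbb Z+\mathbb Z\tau_Q = \tfrac1a\,\mathfrak a_Q$, where $\mathfrak a_Q = \mathbb Z a + \mathbb Z\tfrac{-b+\sqrt D}{2}$ is an integral proper $\mathcal O$-ideal of norm $a$. First I would check that $\Phi$ genuinely lands in $\mathcal C_0(\mathcal O,N)$: since $\gcd(a,N)=1$, the norm of $\mathfrak a_Q$ is prime to $N$, so $\mathfrak a_Q$ is relatively prime to $N$, and $\mathbb Z+\mathbb Z\tau_Q = \mathfrak a_Q\,(a\mathcal O)^{-1}\in\mathcal I(\mathcal O,N)$; moreover $[\mathbb Z+\mathbb Z\tau_Q]=[\mathfrak a_Q]$ in $\mathcal C_0(\mathcal O,N)$ because $a\mathcal O\in\mathcal P_0(\mathcal O,N)$. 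The other tool I need is the transformation rule $\mathbb Z+\mathbb Z\,h(\tau)=(c_h\tau+d_h)^{-1}(\mathbb Z+\mathbb Z\tau)$ for $h=\left(\begin{smallmatrix}a_h&b_h\\c_h&d_h\end{smallmatrix}\right)\in\SL_2(\mathbb Z)$.

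For well-definedness, suppose $Q' = Q\cdot\gamma$ with $\gamma=\left(\begin{smallmatrix}p&q\\r&s\end{smallmatrix}\right)\in\Gamma_0(N)$, so $N\mid r$. Using $\tau_{Q'}=\gamma^{-1}(\tau_Q)$ and the transformation rule with $h=\gamma^{-1}$ gives $\mathbb Z+\mathbb Z\tau_{Q'}=\lambda(\mathbb Z+\mathbb Z\tau_Q)$ with $\lambda^{-1}=p-r\tau_Q$. The decisive step is to scale into $\mathcal O$: because $a\tau_Q=\tfrac{-b+\sqrt D}{2}\in\mathcal O$, we get $a\lambda^{-1}=ap-r\cdot\tfrac{-b+\sqrt D}{2}\in\mathcal O$, and since $N\mid r$ this element is $\equiv ap\pmod{N\mathcal O}$. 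Here $ap$ is a rational integer prime to $N$ ($a$ is prime to $N$ by hypothesis, and $ps\equiv1\pmod N$ forces $\gcd(p,N)=1$). Thus $(a\lambda^{-1})\mathcal O\in\mathcal P_0(\mathcal O,N)$, hence $\lambda\mathcal O=(a\lambda^{-1})^{-1}(a\mathcal O)\in\mathcal P_0(\mathcal O,N)$, and therefore $[\mathbb Z+\mathbb Z\tau_{Q'}]=[\mathbb Z+\mathbb Z\tau_Q]$.

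Surjectivity I would obtain cheaply: every class in $\mathcal C_0(\mathcal O,N)$ has an integral representative $\mathfrak a$ relatively prime to $N$, which can be written in the standard basis $\mathfrak a=\mathbb Z a_0+\mathbb Z\tfrac{-b_0+\sqrt D}{2}$ with $a_0=\mathrm N(\mathfrak a)$ prime to $N$; the associated primitive positive-definite form $Q=a_0x^2+b_0xy+c_0y^2$ then lies in $\mathrm{QF}(D)$ with $\gcd(a_0,N)=1$, and $\Phi([Q])=[\tfrac1{a_0}\mathfrak a]=[\mathfrak a]$. Injectivity is the reverse of the well-definedness computation and is where the real work lies. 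If $[\mathbb Z+\mathbb Z\tau_{Q_1}]=[\mathbb Z+\mathbb Z\tau_{Q_2}]$, then $\mathbb Z+\mathbb Z\tau_{Q_2}=\lambda(\mathbb Z+\mathbb Z\tau_{Q_1})$ with $\lambda\mathcal O\in\mathcal P_0(\mathcal O,N)$; an orientation argument (both $\tau_{Q_i}\in\mathbb H$) produces $g=\left(\begin{smallmatrix}p&q\\r&s\end{smallmatrix}\right)\in\SL_2(\mathbb Z)$ with $\tau_{Q_2}=g(\tau_{Q_1})$ and $\lambda^{-1}=r\tau_{Q_1}+s$ up to a unit of $\mathcal O$. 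Writing $a_1\lambda^{-1}=a_1 s+r\cdot\tfrac{-b_1+\sqrt D}{2}\in\mathcal O$ and reducing the membership $\lambda\mathcal O\in\mathcal P_0(\mathcal O,N)$ modulo $N\mathcal O$, the coefficient of $\tfrac{-b_1+\sqrt D}{2}$ in the basis $\{1,\tfrac{-b_1+\sqrt D}{2}\}$ of $\mathcal O$ must vanish mod $N$; since $a_1$ is prime to $N$ this forces $N\mid r$, i.e.\ $g\in\Gamma_0(N)$, and then $Q_2=Q_1\cdot g^{-1}$ exhibits the $\Gamma_0(N)$-equivalence.

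The main obstacle is exactly this congruence bridge: I must show that membership in $\mathcal P_0(\mathcal O,N)$ — a priori only the statement that $\lambda\mathcal O$ is a quotient of principal ideals generated by elements congruent to rational integers prime to $N$ — really does force the single congruence $N\mid r$. This rests on two points worth isolating first: that the generators of $\mathcal P_0(\mathcal O,N)$ are closed under multiplication, so that every element of $\mathcal P_0(\mathcal O,N)$ is $\tfrac\mu\nu\mathcal O$ with $\mu\equiv m,\ \nu\equiv n\pmod{N\mathcal O}$ for rational integers $m,n$ prime to $N$; and the unit ambiguity in $\lambda$. For $D<-4$ the units are $\pm1$ and cause no trouble, but for $D=-3,-4$ the extra roots of unity can move the congruence into the wrong coordinate, and I expect to absorb them by replacing $g$ with $g$ times the appropriate generator of $\mathrm{Stab}_{\SL_2(\mathbb Z)}(\tau_{Q_1})$, which multiplies $\lambda$ by precisely these units and restores $N\mid r$. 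The repeated, clean use of $\gcd(a,N)=1$ to cancel $a$ modulo $N$ is what makes both directions go through, confirming the earlier remark that coprimality of the leading coefficient to $N$ is essential.
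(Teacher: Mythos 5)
Your proposal is correct and takes essentially the same route as the paper's proof: the same scaling of the multiplier by the leading coefficient $a$ into $\mathcal O = \mathbb Z + \mathbb Z a\tau_Q$ with comparison of coefficients modulo $N\mathcal O$ for both well-definedness and injectivity, and the same reduction of an arbitrary class to an integral proper ideal of norm prime to $N$ for surjectivity (the paper rederives your citation of Cox's Lemma 7.5 via the minimal positive integer $t \in \mathfrak b$). The one divergence is cosmetic: the unit ambiguity you absorb via the stabilizer of $\tau_{Q_1}$ never arises in the paper, which first fixes the generator $\alpha = \beta\delta^{-1}$ of the quotient ideal (using exactly the multiplicative closure of the generators of $\mathcal P_0(\mathcal O, N)$ you isolate) and only then extracts the matrix from that chosen $\alpha$.
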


\begin{proof}
(well-defined) Let $Q(x, y) = ax^2 + bxy + cy^2 \in \mathrm{QF}(D)$ satisfy $(a, N) = 1$. Then $\mathcal O = \mathbb Z + \mathbb Z a \tau_Q$ and $\mathbb Za + \mathbb Z a\tau_Q$ is a proper $\mathcal O$-ideal. Because its norm
\[ \mathrm{N}(\mathbb Za + \mathbb Z a\tau_Q) \ = \ |\mathcal O / (\mathbb Za + \mathbb Z a \tau_Q)| = a \]
is relatively prime to $N$, we see that $\mathbb Z + \mathbb Z\tau_Q \in \mathcal I(\mathcal O, N)$.


Now assume that $Q(x, y)$ and $Q'(x, y)$ are $\Gamma_0(N)$-equivalent to each other. Then we have $\tau_{Q'} = \gamma (\tau_Q)$ for some $\gamma = ( \begin{smallmatrix} a' & b' \\ c' & d' \end{smallmatrix} ) \in \Gamma_0(N)$. Note that
\[
(c'\tau_Q + d')(\mathbb Z + \mathbb Z \tau_{Q'}) \ = \ \mathbb Z(c'\tau_Q + d') + \mathbb Z(a'\tau_Q + b') \ = \ \mathbb Z + \mathbb Z \tau_Q
\]
and that 
\[ (c'\tau_Q + d')\mathcal O \ = \ (ac'\tau_Q + ad')a^{-1}\mathcal O \ \in \ \mathcal P_0(\mathcal O, N) \]
because $c'a\tau_Q + ad' \in \mathcal O$, $c'a\tau_Q + ad' \equiv a \pmod{N\mathcal O}$, and $(a, N) = 1$. Thus we have $[\mathbb Z + \mathbb Z \tau_Q] = [\mathbb Z + \mathbb Z \tau_{Q'}]$ in $\mathcal C_0(\mathcal O, N)$.

(1-to-1) Assume that $[\mathbb Z + \mathbb Z \tau_Q] = [\mathbb Z + \mathbb Z \tau_{Q'}]$, i.e. that $\alpha(\mathbb Z + \mathbb Z \tau_{Q'}) = \mathbb Z + \mathbb Z \tau_Q$ for some $\alpha \mathcal O \in \mathcal P_0(\mathcal O, N)$. Then we see that $\alpha \tau_{Q'} = a \tau_Q + b$ and $\alpha = c\tau_Q + d$ for some $a, b, c, d \in \mathbb Z$, from which we obtain that $\tau_{Q'} = \gamma(\tau_Q)$ with $\gamma := (\begin{smallmatrix} a & b \\ c & d \end{smallmatrix}) \in M_2(\mathbb Z)$. Since
\[ \mathbb Z + \mathbb Z \tau_Q \ = \ \alpha(\mathbb Z + \mathbb Z \tau_{Q'}) \ = \ \mathbb Z(c\tau_Q + d) + \mathbb Z(a\tau_Q + b) \]
and $\tau_Q, \tau_{Q'} \in \mathbb H$, we derive that $\gamma \in SL_2(\mathbb Z)$.

Now we show that $\gamma \in \Gamma_0(N)$. By definition, we may set $\alpha = \beta \delta^{-1}$, where $\beta, \delta \in \mathcal O$ and $\beta \equiv b' \pmod {N\mathcal O}$, $\delta \equiv d' \pmod {N\mathcal O}$ and $b', d' \in \mathbb Z$, $(b'd', N) = 1$. Let $a' = Q(1, 0)$. Then $\mathcal O = \mathbb Z + \mathbb Z a'\tau_Q$. Since $a'\alpha = c a' \tau_Q + a'd \in \mathcal O$, we have
\begin{eqnarray*}
a'\alpha \delta & \equiv & c d' a' \tau_Q + a'dd' \pmod{N\mathcal O},\\
a'\alpha \delta & = & a'\beta \ \equiv \ a'b' \pmod{N\mathcal O},
\end{eqnarray*}
and hence $c \equiv 0 \pmod N$. This says that $Q(x, y)$ is $\Gamma_0(N)$-equivalent to $Q'(x, y)$.

(onto) Let $\mathfrak a \in \mathcal I(\mathcal O, N)$. Then we may write $\mathfrak a = s^{-1} \mathfrak b$, where $\mathfrak b$ is a proper $\mathcal O$-ideal relatively prime to $N$ and $s$ is an integer relatively prime to $N$. Let $t = \min \{ k \in \mathbb N \, | \, k \in \mathfrak b \}$. Since $\mathfrak b \bar{\mathfrak b} = \mathrm{N}(\mathfrak b) \mathcal O$, we have $\mathrm{N}(\mathfrak b) \in \mathfrak b$ and so $t \leq \mathrm{N}(\mathfrak b)$. We write $\mathrm{N}(\mathfrak b) = tq + r$ for some $0 \leq r < t$. Since $r = \mathrm{N}(\mathfrak b) - tq \in \mathfrak b$, we deduce that $r = 0$. Thus $t$ divides $\mathrm{N}(\mathfrak b)$ and hence $(t, N) = 1$.

Write $\mathfrak b = \mathbb Z \alpha + \mathbb Z \beta$ for some $\alpha, \beta \in \mathcal O$ and put $t = u \alpha + v \beta$ for some $u, v \in \mathbb Z$. Notice that $(u, v) = 1$ by the minimality of $t$. So there exist $w, z \in \mathbb Z$ such that $uz - vw = 1$. Since $(\begin{smallmatrix} u & v \\ w & z \end{smallmatrix}) \in \mathrm{SL}_2(\mathbb Z)$, we have
\[ \mathfrak b \ = \ \mathbb Z \alpha + \mathbb Z \beta \ = \ \mathbb Z(u\alpha + v\beta) + \mathbb Z(w\alpha + z\beta) \ = \ t(\mathbb Z + \mathbb Z \tau), \]
where $\tau = \frac{w\alpha + z\beta}{t}$. Replacing $\tau$ by $-\tau$ if necessary, we may assume that $\tau \in \mathbb H$. Let $ax^2 + bx + c \in \mathbb Z[x]$ be the minimal polynomial having $\tau$ as its zero, where $(a, b, c) = 1$ and $a > 0$. Clearly, $Q(x, y) := ax^2 + bxy + cy^2$ is a primitive positive-definite quadratic form. Because $\mathbb Z + \mathbb Z\tau$ is a proper fractional ideal of both $\mathcal O$ and $\mathbb Z + \mathbb Z a\tau$, we see that $\mathcal O = \mathbb Z + \mathbb Z a\tau$ and hence that $Q$ has discriminant $D$. Moreover, we deduce that $(a, N) = 1$ because $t \tau \in \mathfrak b \subset \mathcal O = \mathbb Z + \mathbb Z a\tau$ implies that $a$ divides $t$. Finally, since $st^{-1}\mathcal O \in \mathcal P_0(\mathcal O, N)$, we have
\[
\Phi([Q(x, y)]) \ = \ [\mathbb Z + \mathbb Z \tau] \ = \ [ st^{-1} \mathfrak a] \ = \ [\mathfrak a].
\]
This completes the proof.
\end{proof}

\begin{lemma}\label{Lemma - relatively prime for quadratic forms}
For any $Q \in \mathrm{QF}(D)$ and $M, N \in \mathbb N$ with $(M, N) = 1$, there exists $Q' \in \mathrm{QF}(D)$ such that $Q'$ is $\Gamma_0(N)$-equivalent to $Q$ and $Q'(1, 0)$ is relatively prime to $M$.
\end{lemma}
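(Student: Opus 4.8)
The plan is to recast the lemma as a representation problem and solve it with the Chinese Remainder Theorem, using the hypothesis $(M,N)=1$ in an essential way. Write $Q(x,y)=ax^2+bxy+cy^2$. If $\sigma=\left(\begin{smallmatrix}\alpha & \beta\\ \gamma & \delta\end{smallmatrix}\right)\in\Gamma_0(N)$, then $Q':=Q\cdot\sigma$ lies in $\mathrm{QF}(D)$, is $\Gamma_0(N)$-equivalent to $Q$, and satisfies $Q'(1,0)=Q(\alpha,\gamma)$. Conversely, any pair $(\alpha,\gamma)\in\mathbb Z^2$ with $\gcd(\alpha,\gamma)=1$ and $N\mid\gamma$ is the first column of some element of $\Gamma_0(N)$: complete it to a matrix of determinant one by B\'ezout, whose lower-left entry is $\gamma\equiv 0\ (\mathrm{mod}\ N)$. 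Hence it suffices to produce integers $\alpha,\gamma$ satisfying $\gcd(\alpha,\gamma)=1$, $N\mid\gamma$, and $\gcd(Q(\alpha,\gamma),M)=1$.

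For the representation condition I would first work one prime at a time. For each prime $p\mid M$, the primitivity of $Q$ lets me pick $(\alpha_p,\gamma_p)\in\mathbb F_p^2$ with $Q(\alpha_p,\gamma_p)\not\equiv 0\ (\mathrm{mod}\ p)$: take $(1,0)$ if $p\nmid a$, else $(0,1)$ if $p\nmid c$, and otherwise $(1,1)$, since $p\mid a$ and $p\mid c$ force $p\nmid b$ and then $Q(1,1)\equiv b$. In every case $(\alpha_p,\gamma_p)\neq(0,0)$, so $p$ cannot divide both coordinates. Writing $M_0=\prod_{p\mid M}p$ and using that $(M_0,N)=1$, the Chinese Remainder Theorem produces $\alpha_0,\gamma_0\in\mathbb Z$ with $\alpha_0\equiv\alpha_p$ and $\gamma_0\equiv\gamma_p\ (\mathrm{mod}\ p)$ for every $p\mid M$, together with the independent conditions $\gamma_0\equiv 0\ (\mathrm{mod}\ N)$ and $\alpha_0\equiv 1\ (\mathrm{mod}\ N)$. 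I then fix $\gamma$ to be a positive integer congruent to $\gamma_0$ modulo $M_0N$; this secures $N\mid\gamma$, and, for every $\alpha\equiv\alpha_0\ (\mathrm{mod}\ M_0)$, forces $\gcd(Q(\alpha,\gamma),M)=1$ since then $Q(\alpha,\gamma)\equiv Q(\alpha_p,\gamma_p)\not\equiv 0\ (\mathrm{mod}\ p)$ for each $p\mid M$.

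It remains to choose $\alpha$ so that $\gcd(\alpha,\gamma)=1$ as well, and this is the step that requires the decoupling provided by $(M,N)=1$. Inspecting the prime divisors of the fixed integer $\gamma$: if $p\mid M_0$ divides $\gamma$ then $\gamma_p\equiv 0$, so by construction $\alpha_p\not\equiv 0$ and hence $p\nmid\alpha$ for any $\alpha\equiv\alpha_0\ (\mathrm{mod}\ M_0)$; if $q\mid N$ then $\alpha\equiv 1\ (\mathrm{mod}\ N)$ gives $q\nmid\alpha$; and for the finitely many primes $r\mid\gamma$ with $r\nmid M_0N$ I still have complete freedom modulo $r$. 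Imposing $\alpha\equiv\alpha_0\ (\mathrm{mod}\ M_0N)$ and $\alpha\not\equiv 0\ (\mathrm{mod}\ r)$ for each such $r$ is a system over pairwise coprime moduli, so the Chinese Remainder Theorem yields an integer $\alpha$ meeting all constraints, and then $\gcd(\alpha,\gamma)=1$. The resulting $(\alpha,\gamma)$ gives the desired $Q'=Q\cdot\sigma$. The main obstacle is precisely this simultaneous enforcement of $\gcd(\alpha,\gamma)=1$, the divisibility $N\mid\gamma$, and the representation condition modulo $M$; it is overcome by noting that $(M,N)=1$ makes the constraints modulo $M$ and modulo $N$ independent, while the local non-vanishing of $Q$ prevents any prime of $M$ from dividing both $\alpha$ and $\gamma$.
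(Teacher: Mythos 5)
Your proof is correct, and it shares the paper's overall skeleton: realize $Q'(1,0)$ as $Q(\alpha,\gamma)$ where $(\alpha,\gamma)$ is the first column of a matrix in $\Gamma_0(N)$, use primitivity of $Q$ to find, for each prime $p \mid M$, one of the pairs $(1,0)$, $(0,1)$, $(1,1)$ at which $Q$ is a unit modulo $p$, and glue the local data by the Chinese Remainder Theorem using $(M,N)=1$. Where you genuinely diverge is the coprimality step $\gcd(\alpha,\gamma)=1$. The paper fixes $c$ by CRT (with $c \equiv 0 \pmod N$) and then manufactures $a = p_{r+1}\cdots p_s q$, invoking Dirichlet's theorem on primes in arithmetic progressions to produce a prime $q$ with prescribed residues modulo the remaining $p_i$ and with $q \nmid c$; coprimality of $a$ and $c$ then falls out of the primality of $q$. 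You instead fix $\gamma$ first and run a second CRT pass over the prime divisors of the now-fixed $\gamma$: primes dividing $M_0$ are disposed of by your observation that $p \mid \gamma$ forces the local pair to be $(1,0)$, hence $\alpha \equiv 1 \pmod p$; primes dividing $N$ by the normalization $\alpha \equiv 1 \pmod N$; and the finitely many leftover primes $r \mid \gamma$ with $r \nmid M_0N$ by the free congruences $\alpha \not\equiv 0 \pmod r$, all over pairwise coprime moduli. This eliminates Dirichlet's theorem entirely, making the lemma self-contained, elementary, and effective — the paper's appeal to Dirichlet is strictly stronger machinery than the statement requires — at the price of slightly fussier two-stage bookkeeping; the paper's route, by delegating coprimality to the primality of $q$, avoids tracking the prime divisors of the fixed second coordinate but imports a deep analytic input into what your argument shows is really a finite congruence problem.
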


\begin{proof}
For now, let us assume that for any prime divisor $p_k$ $(1 \leq k \leq s)$ of $M$ there is a pair $(a_k, c_k)$ of integers such that $\mathrm{gcd}(a_k, c_k) = 1$ and $p_k \nmid Q(a_k, c_k)$. Applying the Chinese remainder theorem, we can find $c \in \mathbb Z$ such that $c \equiv 0 \pmod N$ and $c \equiv c_k \pmod{p_k}$ for all $k = 1, \ldots, s$. Now, relabeling the indices if necessary, we may assume that $p_i \nmid a_i$ for $1 \leq i \leq r$ and that $p_j|a_j$ for $r < j \leq s$. Clearly, $p_j \nmid c_j$ for $r < j \leq s$. According to both the Chinese remainder theorem and the Dirichlet theorem on primes in arithmetic progressions, there is a prime $q$ such that $q \equiv [p_{r+1} \cdots p_s]_{p_i}^{-1} a_i \pmod {p_i}$ for all $i = 1, \ldots, r$ and $q \nmid c$.

Set $a = p_{r+1} \cdots p_s q$. Note that $\mathrm{gcd}(a, c) = 1$ because $q \nmid c$ and $\mathrm{gcd}(p_j, c) = \mathrm{gcd}(p_j, c_j) = 1$ for $r < j \leq s$. Moreover, we see that $\mathrm{gcd}(M, Q(a, c)) = 1$ because $Q(a, c) \equiv Q(a_i, c_i) \not \equiv 0 \pmod{p_i}$ for $1 \leq i \leq r$ and $Q(a, c) \equiv Q(0, c_j) \equiv Q(a_j, c_j) \not \equiv 0 \pmod {p_j}$ for $r < j \leq s$. Since $\mathrm{gcd}(a, c) = 1$, we have $\gamma := (\begin{smallmatrix} a & b \\ c & d \end{smallmatrix}) \in \Gamma_0(N)$ for some $b, d \in \mathbb Z$. Put $Q' = Q \cdot \gamma$. Then we have shown that $Q'(1, 0) = Q(a, c)$ is relatively prime to $M$.

Finally, it remains to show that for any prime $p$ there is a pair $(a_0, c_0)$ of integers such that $\mathrm{gcd}(a_0, c_0) = 1$ and $p \nmid Q(a_0, c_0)$. Since $Q$ is primitive, it is readily seen that $\mathrm{gcd}(Q(1, 0),$ $Q(1, 1),$ $Q(0, 1)) = 1$, from which it follows directly that at least one of $Q(1, 0)$, $Q(1, 1)$, and $Q(0, 1)$ is not divisible by $p$.
\end{proof}

We now review the formula for the Dirichlet composition. Given two forms $Q(x, y) = ax^2 + bxy + cy^2$, $Q'(x, y) = a'x^2 +b'xy + c'y^2$ $\in$ $\mathrm{QF}(D)$ satisfying $\gcd(a, a', \frac{b+b'}{2}) = 1$, there exists a unique integer $B$ modulo $2aa'$ such that
\begin{eqnarray*}
B & \equiv & b \pmod {2a} \\
B & \equiv & b' \pmod {2a'} \\
B^2 & \equiv & D \pmod {4aa'}.
\end{eqnarray*}
Then the Dirichlet composition of $Q(x, y)$ and $Q'(x, y)$ is defined to be the form
\[ Q''(x, y) \ = \ aa' x^2 + Bxy + \frac{B^2 - D}{4aa'} y^2 \ \in \ \mathrm{QF}(D). \]

We will show that the Dirichlet composition makes $\mathcal C(D, \Gamma_0(N))$ an abelian group. Let $[Q(x, y)]$, $[Q'(x, y)]$ $\in$ $\mathcal C(D, \Gamma_0(N))$ be given as $Q(x, y) = ax^2 + bxy + cy^2$, $Q'(x, y) = a'x^2 + b'xy + c'y^2$. According to the previous lemma we may assume that $(a, a') = 1$. Then the binary operation on $\mathcal C(D, \Gamma_0(N))$ is defined by
\[ [Q(x, y)] \cdot [Q'(x, y)] \ = \ [Q''(x, y)], \]
where $Q''(x, y)$ is the Dirichlet composition of $Q(x, y)$ and $Q'(x, y)$.

\begin{theorem}\label{Theorem - isomorphism}
The set $\mathcal C(D, \Gamma_0(N))$ is an abelian group under the operation defined as above and the map $\Phi: \mathcal \mathcal C(D, \Gamma_0(N)) \rightarrow \mathcal C_0(\mathcal O, N)$ in Proposition \ref{Proposition - bijection} is an isomorphism.
\end{theorem}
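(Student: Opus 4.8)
The plan is to leverage Proposition \ref{Proposition - bijection}, which already supplies that $\Phi$ is a bijection, together with the fact that $\mathcal C_0(\mathcal O, N) = \mathcal I(\mathcal O, N)/\mathcal P_0(\mathcal O, N)$ is a quotient of an abelian group and hence is itself an abelian group. It therefore suffices to prove the single identity
\[
\Phi([Q] \cdot [Q']) \ = \ \Phi([Q]) \cdot \Phi([Q']),
\]
where the product on the left is Dirichlet composition and the one on the right is multiplication of ideal classes. Once this is established, the well-definedness of the composition on $\mathcal C(D,\Gamma_0(N))$, the group axioms, and the isomorphism claim all follow by transport of structure across the bijection $\Phi$.

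First I would rewrite $\Phi([Q])$ in a form adapted to classical ideal arithmetic. For $Q = ax^2+bxy+cy^2$ with $(a,N)=1$ one has $\tau_Q = (-b+\sqrt D)/(2a)$, so that $a(\mathbb Z + \mathbb Z\tau_Q) = \mathbb Z a + \mathbb Z\frac{-b+\sqrt D}{2} =: \mathfrak a_Q$, the standard ideal attached to $Q$. Because $a$ is a rational integer coprime to $N$, the ideal $a^{-1}\mathcal O$ lies in $\mathcal P_0(\mathcal O, N)$, whence $[\mathbb Z + \mathbb Z\tau_Q] = [\mathfrak a_Q]$ in $\mathcal C_0(\mathcal O, N)$, i.e. $\Phi([Q]) = [\mathfrak a_Q]$. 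This is precisely the step where the coprimality condition $(a,N)=1$ built into the definition of $\mathcal C(D,\Gamma_0(N))$ is indispensable: it is what makes the rescaling trivial in $\mathcal C_0(\mathcal O,N)$ rather than only in the coarser group $\mathcal C(\mathcal O)$, and it explains the remark preceding Theorem \ref{Theorem - isomorphism}.

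The heart of the argument is then the classical ideal identity. By Lemma \ref{Lemma - relatively prime for quadratic forms} I may choose representatives with $(a,a')=1$; let $B$ be the integer modulo $2aa'$ determined by $B\equiv b \ (\mathrm{mod}\ 2a)$, $B \equiv b' \ (\mathrm{mod}\ 2a')$, $B^2 \equiv D\ (\mathrm{mod}\ 4aa')$, and set $Q'' = aa'x^2 + Bxy + \frac{B^2-D}{4aa'}y^2$. I would then verify directly that
\[
\mathfrak a_Q \cdot \mathfrak a_{Q'} \ = \ \mathbb Z\, aa' + \mathbb Z \tfrac{-B+\sqrt D}{2} \ = \ \mathfrak a_{Q''}
\]
by multiplying out the four products of the $\mathbb Z$-generators of $\mathfrak a_Q$ and $\mathfrak a_{Q'}$ and collapsing them using $(a,a')=1$ together with the three congruences defining $B$. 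Combined with the previous paragraph this yields $\Phi([Q'']) = [\mathfrak a_{Q''}] = [\mathfrak a_Q][\mathfrak a_{Q'}] = \Phi([Q])\Phi([Q'])$, the desired homomorphism property.

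Finally, since $\Phi$ is a bijection onto an abelian group, this single identity simultaneously shows that $[Q'']$ depends only on the classes $[Q]$ and $[Q']$ (so that Dirichlet composition descends to a genuine binary operation on $\mathcal C(D,\Gamma_0(N))$, independent of the choices of $a$ and $a'$), and that $\mathcal C(D,\Gamma_0(N))$ inherits an abelian group structure, with identity $\Phi^{-1}([\mathcal O])$ represented by the principal form and inverses pulled back from $\mathcal C_0(\mathcal O,N)$; the map $\Phi$ is then an isomorphism by construction. I expect the main obstacle to be the explicit verification of $\mathfrak a_Q\mathfrak a_{Q'} = \mathfrak a_{Q''}$: the product module a priori has up to four generators, and reducing it to the two-generator form requires exploiting $(a,a')=1$ to produce the leading coefficient $aa'$ and the simultaneous congruences on $B$ to produce $\frac{-B+\sqrt D}{2}$, all while keeping track of the constraint that these ideals remain relatively prime to $N$ so that the computation genuinely takes place inside $\mathcal I(\mathcal O,N)$.
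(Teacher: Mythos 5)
Your proposal is correct and follows essentially the same route as the paper: both establish the ideal identity $(\mathbb Z a + \mathbb Z\frac{-b+\sqrt D}{2})(\mathbb Z a' + \mathbb Z\frac{-b'+\sqrt D}{2}) = \mathbb Z aa' + \mathbb Z\frac{-B+\sqrt D}{2}$ and then transport the group structure of $\mathcal C_0(\mathcal O, N)$ through the bijection $\Phi$, with $(a,N)=1$ ensuring $a^{-1}\mathcal O \in \mathcal P_0(\mathcal O, N)$ so that $\Phi([Q]) = [\mathbb Z a + \mathbb Z\frac{-b+\sqrt D}{2}]$. The only cosmetic difference is that the paper checks independence of the choice of $B$ modulo $2aa'$ directly via the matrix $(\begin{smallmatrix} 1 & 1 \\ 0 & 1 \end{smallmatrix}) \in \Gamma_0(N)$, whereas your argument absorbs this into the injectivity of $\Phi$, which works equally well.
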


\begin{proof}
We need to show that the operation induced from the Dirichlet composition is well defined. Let $Q$, $Q'$, and $Q''$ be given as above. First of all, the operation is independent of the choice of $B$. For, if $B' \equiv B \pmod {2aa'}$, then $aa' x^2 + B'xy + \frac{(B')^2 - D}{4aa'} y^2$ is $\Gamma_0(N)$-equivalent to $Q''$ because $\Gamma_0(N)$ contains $(\begin{smallmatrix} 1 & 1 \\ 0 & 1 \end{smallmatrix})$.

Now we consider the three proper $\mathcal O$-ideals
\[ \mathbb Za + \mathbb Z\frac{-b + \sqrt{D}}{2}, \ \mathbb Za' + \mathbb Z\frac{-b' + \sqrt{D}}{2}, \ \mathbb Zaa' + \mathbb Z\frac{-B + \sqrt{D}}{2}, \]
which are associated with $Q$, $Q'$, and $Q''$, respectively. We claim that
\[ (\mathbb Za + \mathbb Z\frac{-b + \sqrt{D}}{2}) \cdot (\mathbb Za' + \mathbb Z\frac{-b' + \sqrt{D}}{2}) \ = \ \mathbb Zaa' + \mathbb Z\frac{-B + \sqrt{D}}{2} \]
in $\mathcal I(\mathcal O, N)$. Setting $\Delta = \frac{-B + \sqrt{D}}{2}$ and making use 
of $B \equiv b \pmod{2a}$ and $B \equiv b' \pmod{2a'}$, the above three ideals can be written as $\mathbb Za + \mathbb Z\Delta$, $\mathbb Za' + \mathbb Z\Delta$, and $\mathbb Zaa' + \mathbb Z\Delta$, respectively. Since $\Delta^2 \equiv -B\Delta \ (\bmod \ {aa'})$ by the congruence equation $B^2 \equiv D \ (\bmod \ {4aa'})$, we have
\begin{eqnarray*}
(\mathbb Za + \mathbb Z\Delta) \cdot (\mathbb Za' + \mathbb Z\Delta) & = & \mathbb Z aa' + \mathbb Z a \Delta + \mathbb Za'\Delta + \mathbb Z\Delta^2 \\
& = & \mathbb Z aa' + \Delta(\mathbb Za + \mathbb Za' + \mathbb ZB) \\
& = & \mathbb Zaa' + \mathbb Z\Delta.
\end{eqnarray*}
Here we have used the property that $(a, a', B) = 1$.

Let $[Q] = [Q_0]$ and $[Q'] = [Q'_0]$, where $Q_0 = a_0x^2 + b_0xy + c_0y^2$ and $Q'_0 = a'_0x^2 + b'_0xy + c'_0y^2$ with $(a_0, a'_0, \frac{b_0 + b_0'}{2}) = 1$. Let $Q''_0 = a_0a'_0x^2 + B_0xy + \frac{B_0^2 - D}{4a_0a'_0}y^2$ be the Dirichlet composition of $Q_0$ and $Q'_0$. To complete the proof of the well-definedness, it suffices to show that $\Phi([Q'']) = \Phi([Q''_0])$ as $\Phi$ is bijective. Note that
\begin{eqnarray*}
\Phi([Q'']) & = & [\mathbb Z + \mathbb Z \tau_{Q''}] \ = \ \lbrack \mathbb Zaa' + \mathbb Z\Delta \rbrack \ = \ \lbrack \mathbb Za + \mathbb Z\Delta \rbrack \cdot \lbrack \mathbb Za' + \mathbb Z \Delta \rbrack \\
& = & \Phi([Q]) \cdot \Phi([Q']) \ = \ \Phi([Q_0]) \cdot \Phi([Q'_0]) \\
& = & \lbrack \mathbb Z a_0 + \mathbb Z\frac{-b_0 + \sqrt{D}}{2} \rbrack \cdot \lbrack \mathbb Z a'_0 + \mathbb Z\frac{-b'_0 + \sqrt{D}}{2} \rbrack \\
& = & \lbrack \mathbb Z a_0 a'_0 + \mathbb Z \frac{-B_0 + \sqrt{D}}{2} \rbrack \\
& = & \Phi([Q''_0]).
\end{eqnarray*}
Here we have employed the claim above. Therefore the operation is well defined and automatically it has been proved that $\mathcal C(D, \Gamma_0(N))$ is an abelian group and that $\Phi$ is an isomorphism.
\end{proof}

\begin{corollary}
The identity element of $\mathcal C(D, \Gamma_0(N))$ is
\[
\begin{array}{ll}
\lbrack x^2 - \frac{D}{4} y^2 \rbrack & \mbox{if } D \equiv 0 \ (\bmod \ 4), \\
\lbrack x^2 + xy + \frac{1 - D}{4} y^2 \rbrack & \mbox{if } D \equiv 1 \ (\bmod \ 4),
\end{array}
\]
and the inverse element of $[ax^2 + bxy + cy^2]$ in $\mathcal C(D, \Gamma_0(N))$ is $[ax^2 - bxy + cy^2]$.
\end{corollary}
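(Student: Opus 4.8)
The plan is to verify both assertions by reducing them to computations that have, in essence, already appeared. For the identity element I would argue directly with the Dirichlet composition, while for the inverse I would pass through the isomorphism $\Phi$ of Theorem~\ref{Theorem - isomorphism}.

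First, for the identity, let $Q_0$ denote the proposed form, i.e.\ the principal form of discriminant $D$; since its leading coefficient is $1$, which is coprime to $N$, we have $[Q_0] \in \mathcal C(D, \Gamma_0(N))$. Given an arbitrary $Q = ax^2 + bxy + cy^2$ with $\gcd(a,N)=1$, the leading coefficients $1$ and $a$ are automatically coprime, so the composition $[Q_0]\cdot[Q]$ is defined without invoking Lemma~\ref{Lemma - relatively prime for quadratic forms}. I would then check that $B = b$ satisfies the three defining congruences for the composition coefficient: the condition $B \equiv b \pmod{2a}$ is immediate, the condition $B \equiv b_0 \pmod 2$ holds because $b$ and $b_0$ share the parity forced by $D \bmod 4$ (both even when $D \equiv 0$, both odd when $D \equiv 1$), and $B^2 = b^2 \equiv D \pmod{4a}$ follows from $b^2 - D = 4ac$. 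Substituting $B = b$ into the composition formula returns $ax^2 + bxy + \frac{b^2-D}{4a}y^2 = ax^2+bxy+cy^2 = Q$, so $[Q_0]\cdot[Q] = [Q]$ and $[Q_0]$ is the identity.

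For the inverse, write $\tilde Q = ax^2 - bxy + cy^2$; this is again primitive positive-definite of discriminant $D$ with leading coefficient $a$, so $[\tilde Q] \in \mathcal C(D, \Gamma_0(N))$. Since $\Phi$ is an isomorphism, it suffices to show that $\Phi([Q])\cdot\Phi([\tilde Q])$ is the identity $[\mathcal O]$ of $\mathcal C_0(\mathcal O, N)$. Setting $\mathfrak a = \mathbb Za + \mathbb Z\frac{-b+\sqrt D}{2}$, I would first record that $\Phi([Q]) = [\mathfrak a]$, the factor $a$ relating $\mathbb Z + \mathbb Z\tau_Q$ to $\mathfrak a$ being harmless in the quotient. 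Computing $\tau_{\tilde Q} = \frac{b+\sqrt D}{2a}$ and clearing the denominator then shows $\Phi([\tilde Q]) = [\bar{\mathfrak a}]$, the class of the complex-conjugate ideal, since $\mathbb Za + \mathbb Z\frac{b+\sqrt D}{2} = \mathbb Za + \mathbb Z\frac{-b-\sqrt D}{2} = \bar{\mathfrak a}$. Consequently $\Phi([Q])\cdot\Phi([\tilde Q]) = [\mathfrak a\bar{\mathfrak a}] = [\mathrm N(\mathfrak a)\mathcal O] = [a\mathcal O]$, invoking the identity $\mathfrak a\bar{\mathfrak a} = \mathrm N(\mathfrak a)\mathcal O$ already used in the proof of Proposition~\ref{Proposition - bijection}.

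The step demanding the most care — and the reason the subgroup $\mathcal P_0$ rather than $\mathcal P$ enters — is the final identification $[a\mathcal O] = [\mathcal O]$ in $\mathcal C_0(\mathcal O, N)$, together with the analogous reduction $[\mathbb Z+\mathbb Z\tau_Q] = [\mathfrak a]$. Neither is automatic from principality alone; both hold precisely because $a$ is a \emph{rational} integer with $\gcd(a,N)=1$, so that $a \equiv a \pmod{N\mathcal O}$ exhibits $a\mathcal O \in \mathcal P_0(\mathcal O, N)$. Once this is in place, applying $\Phi^{-1}$ yields $[Q]\cdot[\tilde Q] = [Q_0]$, which completes both assertions.
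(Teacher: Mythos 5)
Your proof is correct, but it takes a partially different route from the paper, which handles \emph{both} assertions on the ideal side through the isomorphism $\Phi$. For the identity, the paper simply computes $\Phi(\lbrack x^2 - \frac{D}{4}y^2 \rbrack) = \lbrack \mathbb Z + \mathbb Z\frac{\sqrt D}{2} \rbrack = \lbrack \mathcal O \rbrack$ (and the analogous statement for $D \equiv 1 \pmod 4$) and lets the isomorphism do the rest; you instead stay entirely on the form side, observing that with $a_0 = 1$ one may take $B = b$ in the Dirichlet composition, so that composing the principal form with $Q$ literally returns $Q$. Your version is more elementary for this half (no ideals needed) at the cost of the parity bookkeeping $B \equiv b_0 \pmod 2$, which you handle correctly since $b \equiv D \pmod 2$; and your use of the pair $(Q_0, Q)$ as representatives is legitimate because the well-definedness proved in Theorem \ref{Theorem - isomorphism} covers any representatives satisfying the gcd condition, not only coprime leading coefficients. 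For the inverse, the two arguments essentially coincide: both reduce to $[\mathfrak a][\bar{\mathfrak a}] = [a\mathcal O] = [\mathcal O]$ with $\mathfrak a = \mathbb Za + \mathbb Z\frac{-b+\sqrt D}{2}$. The difference is that the paper expands the product of generators by hand and uses primitivity $\gcd(a, b, c) = 1$ to collapse $\mathbb Za^2 + \mathbb Zab + \mathbb Zac$ to $\mathbb Za$, whereas you quote the norm identity $\mathfrak a\bar{\mathfrak a} = \mathrm{N}(\mathfrak a)\mathcal O$ for proper ideals --- a fact the paper itself invokes elsewhere (e.g.\ in the surjectivity part of Proposition \ref{Proposition - bijection}), so the citation is sound. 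A point in your favor: you make explicit a step the paper's computation leaves silent, namely that $[a\mathcal O] = [\mathcal O]$ in $\mathcal C_0(\mathcal O, N)$ does not follow from principality alone but from $a$ being a rational integer with $\gcd(a, N) = 1$, which places $a\mathcal O$ in $\mathcal P_0(\mathcal O, N)$; the same observation is what justifies the normalization $[\mathbb Z + \mathbb Z\tau_Q] = [\mathfrak a]$ used throughout.
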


\begin{proof}
It follows immediately from the facts that
\[
\begin{array}{ll}
\Phi(\lbrack x^2 - \frac{D}{4} y^2 \rbrack) \ = \ \lbrack \mathbb Z + \mathbb Z \frac{\sqrt{D}}{2} \rbrack \ = \ \lbrack \mathcal O \rbrack & \mbox{if } D \equiv 0 \ (\bmod \ 4), \\
\Phi(\lbrack x^2 + xy + \frac{1 - D}{4} y^2 \rbrack) \ = \ \lbrack \mathbb Z + \mathbb Z \frac{-1 + \sqrt{D}}{2} \rbrack \ = \ \lbrack \mathcal O \rbrack \quad & \mbox{if } D \equiv 1 \ (\bmod \ 4),
\end{array}
\]
and that
\begin{eqnarray*}
& & \hspace{-.6cm} \Phi(\lbrack ax^2 + bxy + cy^2 \rbrack) \cdot \Phi(\lbrack ax^2 - bxy + cy^2 \rbrack) \\
& = & \lbrack \mathbb Za + \mathbb Z\frac{-b + \sqrt{D}}{2} \rbrack \cdot \lbrack \mathbb Za + \mathbb Z\frac{b + \sqrt{D}}{2} \rbrack \\
& = & \lbrack \mathbb Za^2 + \mathbb Z ab + \mathbb Z ac + \mathbb Z \frac{-ab + a\sqrt{D}}{2} \rbrack \\
& = & \lbrack a \cdot (\mathbb Z + \mathbb Z\frac{-b + \sqrt{D}}{2}) \rbrack \\
& = & [ \mathcal O ].
\end{eqnarray*}
\end{proof}

The following proposition is a slight generalization of Proposition 7.20 in Cox's book \cite{Cox} and can be proved by the same argument as in the proof of Cox's. However, we present here the proof for the sake of completeness.

\begin{proposition}\label{Proposition - norm-preserving isomorphism}
Let $\mathcal O$ be an order of conductor $f$ in an imaginary quadratic field $K$ and let $N$ denote a positive integer. Then the map $\mathfrak a$ $\longmapsto$ $\mathfrak a \cap \mathcal O$ induces a norm-preserving isomorphism from $\mathcal I(\mathcal O_K, Nf)$ to $\mathcal I(\mathcal O, Nf)$ and its inverse map is given by $\mathfrak b$ $\longmapsto$ $\mathfrak b \mathcal O_K$.
\end{proposition}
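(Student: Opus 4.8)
The plan is to deduce the statement from the conductor-$f$ case, which is Cox's Proposition 7.20 in \cite{Cox}, by tracking the behavior of the norm. First I would treat the two maps on generators. Write $\psi(\mathfrak a) = \mathfrak a \cap \mathcal O$ and $\phi(\mathfrak b) = \mathfrak b \mathcal O_K$, and let $\mathfrak a$ be an $\mathcal O_K$-ideal relatively prime to $Nf$. The crux of the whole argument is the identity $\mathcal O + \mathfrak a = \mathcal O_K$: since $\mathfrak a$ is prime to $f$ we have $\mathfrak a + f\mathcal O_K = \mathcal O_K$, and because $f\mathcal O_K \subset \mathcal O = \mathbb Z + f\mathcal O_K$ this upgrades to $\mathcal O + \mathfrak a = \mathcal O_K$. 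The second isomorphism theorem then gives $\mathcal O/(\mathfrak a \cap \mathcal O) \cong \mathcal O_K/\mathfrak a$, so $\psi(\mathfrak a)$ is an $\mathcal O$-ideal with $\mathrm N(\psi(\mathfrak a)) = \mathrm N(\mathfrak a)$; in particular $\psi$ is norm-preserving on generators, and $\mathfrak a \cap \mathcal O$ is proper by the remark recorded earlier that every $\mathcal O$-ideal prime to $f$ is proper.

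Next I would record, following Cox, that $\psi$ and $\phi$ are mutually inverse and multiplicative: one checks $(\mathfrak a \cap \mathcal O)\mathcal O_K = \mathfrak a$ and $(\mathfrak b \mathcal O_K) \cap \mathcal O = \mathfrak b$ for ideals prime to $f$, and that both maps respect products. These rest on the same localization-away-from-$f$ principle, namely that $\mathcal O$ and $\mathcal O_K$ agree at the primes not dividing $f$. This yields the norm-preserving isomorphism $\mathcal I(\mathcal O_K, f) \cong \mathcal I(\mathcal O, f)$ of Cox's Proposition 7.20, with the stated maps as mutual inverses.

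It then remains to restrict these isomorphisms to the subgroups of ideals prime to $Nf$. Here I would invoke the norm criterion quoted earlier in the text: an ideal is relatively prime to $Nf$ if and only if its norm is. Since $\psi$ preserves norms and sends an integral generator $\mathfrak a$ of $\mathcal I(\mathcal O_K, Nf)$ to an integral $\mathcal O$-ideal with the same norm, $\psi(\mathfrak a)$ is again prime to $Nf$, hence a generator of $\mathcal I(\mathcal O, Nf)$; being a homomorphism, $\psi$ therefore maps $\mathcal I(\mathcal O_K, Nf)$ into $\mathcal I(\mathcal O, Nf)$, and symmetrically $\phi$ maps $\mathcal I(\mathcal O, Nf)$ into $\mathcal I(\mathcal O_K, Nf)$. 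Because $\psi$ and $\phi$ are already mutually inverse on the larger groups, their restrictions are mutually inverse norm-preserving isomorphisms between the two subgroups, which is exactly the assertion.

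The routine content lies in the generator-level verifications of the second paragraph, and the only genuinely delicate point is the identity $\mathcal O + \mathfrak a = \mathcal O_K$ together with the equalities $(\mathfrak a \cap \mathcal O)\mathcal O_K = \mathfrak a$ and $(\mathfrak b\mathcal O_K)\cap\mathcal O = \mathfrak b$; all of these hinge on primality to $f$ (not merely to $N$) and on the structure $\mathcal O = \mathbb Z + f\mathcal O_K$. Once the conductor-$f$ isomorphism is in hand, the passage from $f$ to $Nf$ is essentially free, being nothing more than the observation that a norm-preserving map automatically preserves the property of being prime to $Nf$. I expect this norm-tracking step to be the main---but quite mild---obstacle, since it is the only place where the generalization beyond Cox's statement actually occurs.
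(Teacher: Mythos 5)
Your argument is correct, but it reaches the statement by a different route than the paper. The paper does not treat Cox's Proposition 7.20 as a black box; it re-runs Cox's entire argument with $Nf$ in place of $f$: it proves directly that the natural map $\mathcal O/(\mathfrak a \cap \mathcal O) \rightarrow \mathcal O_K/\mathfrak a$ is bijective (injectivity being clear, surjectivity from $\mathfrak a + Nf\mathcal O_K = \mathcal O_K$ and $f\mathcal O_K \subset \mathcal O$, which is the same computation as your second-isomorphism-theorem step), then establishes $(\mathfrak a \cap \mathcal O)\mathcal O_K = \mathfrak a$ via the chain $\mathfrak a = \mathfrak a(\mathfrak a\cap\mathcal O + Nf\mathcal O) \subset (\mathfrak a\cap\mathcal O)\mathcal O_K + f\mathfrak a \subset (\mathfrak a\cap\mathcal O)\mathcal O_K$, verifies $\mathfrak b\mathcal O_K + Nf\mathcal O_K = (\mathfrak b + Nf\mathcal O)\mathcal O_K = \mathcal O_K$ so that $\mathfrak b\mathcal O_K$ is prime to $Nf$ by an explicit ideal identity rather than by norm bookkeeping, and proves $\mathfrak b\mathcal O_K \cap \mathcal O = \mathfrak b$ by a similar chain. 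You instead quote the conductor-level isomorphism $\mathcal I(\mathcal O_K, f) \cong \mathcal I(\mathcal O, f)$ and restrict, using the norm criterion (an ideal is prime to a modulus if and only if its norm is --- stated in the paper for $\mathcal O$-ideals and applicable to $\mathcal O_K$ as the order of conductor $1$) to convert norm preservation into preservation of $Nf$-primality; since primality to $Nf$ implies primality to $f$, the subgroups $\mathcal I(\mathcal O_K, Nf)$ and $\mathcal I(\mathcal O, Nf)$ do lie inside the domain and codomain of Cox's isomorphism, generators map to generators, and mutual inverseness (hence surjectivity of each restriction) is inherited --- all points you handle, including the observation that $\phi$ preserves norms because it inverts the norm-preserving $\psi$. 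Both proofs are sound: the paper's rewrite is self-contained and yields the $Nf$-level identities in explicit form, while your reduction is more modular and isolates the only genuinely new content beyond Cox --- the passage from modulus $f$ to $Nf$ --- in the single norm-tracking step, exactly as you say.
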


\begin{proof}
Let $\mathfrak a$ be an $\mathcal O_K$-ideal relatively prime to $Nf$. Then the natural map
\[ \mathcal O / (\mathfrak a \cap \mathcal O) \longrightarrow \mathcal O_K / \mathfrak a \]
is an injective homomorphism. Moreover, this map is also surjective since $\mathfrak a + Nf\mathcal O_K = \mathcal O_K$ and $f \mathcal O_K \subset \mathcal O$. Thus we have $\mathrm{N}(\mathfrak a) = \mathrm{N}(\mathfrak a \cap \mathcal O)$ and so $\mathfrak a \cap \mathcal O \in \mathcal I(\mathcal O, Nf)$. We further deduce that $(\mathfrak a \cap \mathcal O)\mathcal O_K = \mathfrak a$ because $(\mathfrak a \cap \mathcal O)\mathcal O_K \subset \mathfrak a$ is clear and
\[ \mathfrak a \ = \ \mathfrak a \mathcal O \ = \ \mathfrak a(\mathfrak a \cap \mathcal O + Nf\mathcal O) \ \subset \ (\mathfrak a \cap \mathcal O)\mathcal O_K + f \mathfrak a \ \subset \ (\mathfrak a \cap \mathcal O)\mathcal O_K. \]

Now let $\mathfrak b$ be an $\mathcal O$-ideal relatively prime to $Nf$. Then we see
\[ \mathfrak b \mathcal O_K + Nf \mathcal O_K \ = \ (\mathfrak b + Nf\mathcal O)\mathcal O_K \ = \ \mathcal O \mathcal O_K \ = \ \mathcal O_K, \]
whence $\mathfrak b\mathcal O_K$ is an $\mathcal O_K$-ideal relatively prime to $Nf$. We further deduce that $\mathfrak b \mathcal O_K \cap \mathcal O = \mathfrak b$ because $\mathfrak b \subset \mathfrak b \mathcal O_K \cap \mathcal O$ is clear and
\begin{eqnarray*}
\mathfrak b \mathcal O_K \cap \mathcal O & = & (\mathfrak b \mathcal O_K \cap \mathcal O) \mathcal O \ = \ (\mathfrak b \mathcal O_K \cap \mathcal O)(\mathfrak b + Nf\mathcal O) \\
& \subset & \mathfrak b + Nf(\mathfrak b\mathcal O_K \cap \mathcal O) \ \subset \ \mathfrak b + Nf \mathfrak b \mathcal O_K \\
& \subset & \mathfrak b + N \mathfrak b \ \subset \ \mathfrak b.
\end{eqnarray*}

This shows that we get a bijection between the monoids of $\mathcal O_K$-ideals and $\mathcal O$-ideals that are relatively prime to $Nf$. Since the inverse map is obviously multiplicative, the bijection induces a well-defined isomorphism between $I(\mathcal O_K, Nf)$ and $I(\mathcal O, Nf)$.
\end{proof}

\begin{lemma}\label{Lemma - relatively prime for ideals}
Let $\mathcal O$ be an order in an imaginary quadratic field and let $M, N$ be positive integers which are relatively prime. Then every ideal class in $\mathcal C_0(\mathcal O, N)$ contains a proper $\mathcal O$-ideal whose norm is relatively prime to $M$.
\end{lemma}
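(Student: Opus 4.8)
The plan is to reduce the statement to the language of binary quadratic forms via the bijection $\Phi$ of Proposition \ref{Proposition - bijection} and then invoke Lemma \ref{Lemma - relatively prime for quadratic forms}. Let $D$ be the discriminant of $\mathcal O$, so that $\mathcal C_0(\mathcal O, N)$ is isomorphic to $\mathcal C(D, \Gamma_0(N))$ through $\Phi$. Given an arbitrary class in $\mathcal C_0(\mathcal O, N)$, the surjectivity of $\Phi$ yields a form $Q = ax^2 + bxy + cy^2 \in \mathrm{QF}(D)$ with $\gcd(a, N) = 1$ such that $\Phi([Q]) = [\mathbb Z + \mathbb Z \tau_Q]$ is the prescribed class.

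Next I would apply Lemma \ref{Lemma - relatively prime for quadratic forms} to $Q$ and the coprime pair $M, N$, obtaining a form $Q' = a'x^2 + b'xy + c'y^2$ that is $\Gamma_0(N)$-equivalent to $Q$ and satisfies $\gcd(a', M) = 1$, where $a' = Q'(1, 0)$. I then want to observe that the leading coefficient stays coprime to $N$: writing $Q' = Q \cdot \gamma$ with $\gamma = \left(\begin{smallmatrix} p & q \\ r & s \end{smallmatrix}\right) \in \Gamma_0(N)$, so that $N \mid r$ and $ps - qr = 1$, one has $\gcd(p, N) = 1$ and $a' = Q(p, r) \equiv a p^2 \pmod N$, whence $\gcd(a', N) = \gcd(a, N) = 1$. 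In particular $[Q'] = [Q]$ in $\mathcal C(D, \Gamma_0(N))$, and $a'$ is a unit modulo $N$.

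Finally I would exhibit the desired ideal explicitly. Set $\mathfrak b := \mathbb Z a' + \mathbb Z \frac{-b' + \sqrt D}{2} = a'(\mathbb Z + \mathbb Z \tau_{Q'})$, which is precisely the proper $\mathcal O$-ideal associated with $Q'$ in the proof of Proposition \ref{Proposition - bijection}, where it is shown that $\mathrm N(\mathfrak b) = a'$. Since $\gcd(a', M) = 1$, the norm of $\mathfrak b$ is relatively prime to $M$, as required. To see that $\mathfrak b$ lies in the prescribed class, note that $\gcd(a', N) = 1$ forces $a'\mathcal O \in \mathcal P_0(\mathcal O, N)$, so that $[\mathfrak b] = [\mathbb Z + \mathbb Z \tau_{Q'}] = \Phi([Q']) = \Phi([Q])$, which is the given class in $\mathcal C_0(\mathcal O, N)$.

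I expect the only delicate point to be the bookkeeping in the middle step, namely verifying that the leading coefficient produced by Lemma \ref{Lemma - relatively prime for quadratic forms} remains a unit modulo $N$. This single fact does double duty: it keeps $Q'$ inside the subset of forms defining $\mathcal C(D, \Gamma_0(N))$, and it guarantees that passing from $\mathbb Z + \mathbb Z \tau_{Q'}$ to the integral representative $\mathfrak b$ (i.e.\ multiplication by $a'\mathcal O$) does not alter the class in $\mathcal C_0(\mathcal O, N)$. Everything else is a direct translation through the isomorphism $\Phi$ together with the norm computation already recorded in Proposition \ref{Proposition - bijection}.
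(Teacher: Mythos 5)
Your proof is correct and takes essentially the same route as the paper's: reduce to forms via the bijection $\Phi$ of Proposition \ref{Proposition - bijection}, apply Lemma \ref{Lemma - relatively prime for quadratic forms} to replace the leading coefficient by one coprime to $M$, and exhibit the ideal $a'(\mathbb Z + \mathbb Z \tau_{Q'})$ of norm $a'$ as the representative. Your explicit verification that $a' \equiv ap^2 \pmod{N}$ remains a unit modulo $N$ (so that $[Q']$ still defines a class in $\mathcal C(D, \Gamma_0(N))$ and $a'\mathcal O \in \mathcal P_0(\mathcal O, N)$) is a detail the paper leaves implicit, and a worthwhile one.
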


\begin{proof}
Since $\Phi$ is bijective, every ideal class in $\mathcal C_0(\mathcal O, N)$ is of the form $[\mathbb Z + \mathbb Z \tau_Q]$ for some $Q = ax^2 + bxy + cy^2 \in \mathrm{QF}(D)$ with $(a, N) = 1$, where $D$ is the discriminant of $\mathcal O$. Appealing to Lemma \ref{Lemma - relatively prime for quadratic forms}, we may assume that $a$ is also relatively prime to $M$. Since $[\mathbb Z + \mathbb Z \tau_Q] = [\mathbb Za + \mathbb Za \tau_Q]$, the desired assertion readily follows from the proof of Proposition \ref{Proposition - bijection}, which says that $\mathbb Za + \mathbb Za\tau_Q$ is a proper $\mathcal O$-ideal having norm $a$.
\end{proof}

\begin{theorem}
Let $\mathcal O$ be an order of conductor $f$ in an imaginary quadratic field $K$ and let $N$ denote a positive integer. Then the map
\[ \Psi : \mathcal C_0(\mathcal O_K, Nf) \longrightarrow \mathcal C_0(\mathcal O, N) \]
given by $[\mathfrak a \mathfrak b^{-1}] \longmapsto [(\mathfrak a \cap \mathcal O)(\mathfrak b \cap \mathcal O)^{-1}]$ is an isomorphism, where $\mathfrak a$ and $\mathfrak b$ are $\mathcal O_K$-ideals relatively prime to $Nf$.
\end{theorem}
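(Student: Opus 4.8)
The plan is to realize $\Psi$ as the map induced on ideal class groups by the norm‑preserving isomorphism of Proposition~\ref{Proposition - norm-preserving isomorphism}. Write $\varphi\colon\mathcal I(\mathcal O_K,Nf)\to\mathcal I(\mathcal O,Nf)$, $\varphi(\mathfrak a)=\mathfrak a\cap\mathcal O$, for that isomorphism. Since $f\geq 1$ we have $Nf\mathcal O\subseteq N\mathcal O$, so every $\mathcal O$-ideal relatively prime to $Nf$ is relatively prime to $N$ and hence $\mathcal I(\mathcal O,Nf)\subseteq\mathcal I(\mathcal O,N)$. Composing with this inclusion and the quotient map I obtain a homomorphism
\[ \widetilde\Psi\colon\ \mathcal I(\mathcal O_K,Nf)\ \xrightarrow{\ \varphi\ }\ \mathcal I(\mathcal O,Nf)\ \hookrightarrow\ \mathcal I(\mathcal O,N)\ \longrightarrow\ \mathcal C_0(\mathcal O,N), \]
whose effect on a genuine ideal is $\mathfrak a\mapsto[\mathfrak a\cap\mathcal O]$. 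I will prove that $\widetilde\Psi$ is surjective with kernel exactly $\mathcal P_0(\mathcal O_K,Nf)$; the first isomorphism theorem then shows at once that the stated formula for $\Psi$ is well defined and that $\Psi$ is an isomorphism.

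For surjectivity, an arbitrary class of $\mathcal C_0(\mathcal O,N)$ is represented, through Proposition~\ref{Proposition - bijection}, by a proper ideal $\mathbb Za+\mathbb Za\tau_Q$ of norm $a$ with $(a,N)=1$; applying Lemma~\ref{Lemma - relatively prime for ideals} with an $M$ divisible by every prime dividing $f$ but not $N$, I may in addition arrange the representative $\mathfrak c$ to have norm prime to $f$, hence relatively prime to $Nf$. Then $\mathfrak c\mathcal O_K\in\mathcal I(\mathcal O_K,Nf)$ and $\varphi(\mathfrak c\mathcal O_K)=\mathfrak c$, so $\widetilde\Psi(\mathfrak c\mathcal O_K)=[\mathfrak c]$. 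For the inclusion $\mathcal P_0(\mathcal O_K,Nf)\subseteq\ker\widetilde\Psi$, take a generator $\alpha\mathcal O_K$ with $\alpha\equiv a\pmod{Nf\mathcal O_K}$, $a\in\mathbb Z$, $(a,Nf)=1$. Because $f\mathcal O_K\subseteq\mathcal O$ gives $Nf\mathcal O_K\subseteq N\mathcal O$, one gets $\alpha\in\mathcal O$ and $\varphi(\alpha\mathcal O_K)=\alpha\mathcal O$ with $\alpha\equiv a\pmod{N\mathcal O}$, so $\alpha\mathcal O\in\mathcal P_0(\mathcal O,N)$ and $\widetilde\Psi(\alpha\mathcal O_K)=1$.

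The heart of the argument is the following single‑generator statement: if $\alpha\in\mathcal O$ satisfies $\alpha\equiv a\pmod{N\mathcal O}$ for some $a\in\mathbb Z$ with $(a,N)=1$, and $\alpha\mathcal O$ is relatively prime to $Nf$, then $\alpha\mathcal O_K\in\mathcal P_0(\mathcal O_K,Nf)$. To prove it I would use $\mathcal O=\mathbb Z+f\mathcal O_K$ together with $N\mathcal O=N\mathbb Z+Nf\mathcal O_K$ to write $\alpha=a'+Nf\gamma$ with $a'\in\mathbb Z$ and $\gamma\in\mathcal O_K$, so that $\alpha\equiv a'\pmod{Nf\mathcal O_K}$ and $a'\equiv a\pmod N$. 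The point is that $a'$ is \emph{automatically} prime to $f$ as well: expanding the element norm $\mathrm N(\alpha)=\alpha\bar\alpha$ gives $\mathrm N(\alpha)\equiv(a')^2\pmod{Nf}$, while the ideal norm $\mathrm N(\alpha\mathcal O)=|\mathrm N(\alpha)|$ is prime to $Nf$. Hence $(a',Nf)=1$, which is exactly the condition placing $\alpha\mathcal O_K$ in $\mathcal P_0(\mathcal O_K,Nf)$.

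It remains to prove $\ker\widetilde\Psi\subseteq\mathcal P_0(\mathcal O_K,Nf)$, which I expect to be the main obstacle, precisely because the congruence available on the $\mathcal O$-side is only modulo $N\mathcal O$, whereas membership in $\mathcal P_0(\mathcal O_K,Nf)$ demands a congruence modulo the strictly smaller ideal $Nf\mathcal O_K$. Given $\mathfrak a\in\ker\widetilde\Psi$, I would first replace it within its class in $\mathcal C_0(\mathcal O_K,Nf)$ by an integral ideal relatively prime to $Nf$ (using Lemma~\ref{Lemma - relatively prime for ideals} for the order $\mathcal O_K$, which is legitimate since $\mathcal P_0(\mathcal O_K,Nf)\subseteq\ker\widetilde\Psi$); then $\mathfrak c_0:=\mathfrak a\cap\mathcal O$ is integral, lies in $\mathcal P_0(\mathcal O,N)$, and is principal, say $\mathfrak c_0=\theta\mathcal O$ with $\theta\in\mathcal O$ relatively prime to $Nf$. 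Writing a representative of $\mathcal P_0(\mathcal O,N)$ as a single ratio $\eta\zeta^{-1}\mathcal O$ of principal ideals with integer residues prime to $N$ and comparing generators yields $\theta\equiv uk\pmod{N\mathcal O}$ for some unit $u\in\mathcal O^\times$ and some $k\in\mathbb Z$ with $(k,N)=1$. Replacing $\theta$ by $\theta u^{-1}\in\mathcal O$, which generates the same ideal, makes $\theta u^{-1}\equiv k\pmod{N\mathcal O}$, and the single‑generator statement above then gives $\mathfrak a=\theta\mathcal O_K=(\theta u^{-1})\mathcal O_K\in\mathcal P_0(\mathcal O_K,Nf)$. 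The modulus mismatch is exactly what the norm computation in the key statement overcomes, so once that is in hand the remaining steps amount to routine bookkeeping with the isomorphism $\varphi$.
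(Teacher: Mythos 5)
Your proposal is correct and takes essentially the same route as the paper's own proof: both define the homomorphism $\mathcal I(\mathcal O_K, Nf) \to \mathcal C_0(\mathcal O, N)$, $\mathfrak a \mapsto [\mathfrak a \cap \mathcal O]$, prove surjectivity via Lemma \ref{Lemma - relatively prime for ideals} together with Proposition \ref{Proposition - norm-preserving isomorphism}, and establish $\ker = \mathcal P_0(\mathcal O_K, Nf)$ by the same decisive norm computation $\mathrm N(\alpha) \equiv (a')^2 \pmod{Nf}$, which upgrades the congruence modulo $N\mathcal O$ to one modulo $Nf\mathcal O_K$. The only deviations are cosmetic: the paper reduces a kernel element to an integral ideal via $\mathfrak a \mathfrak b^{-1} = \mathrm N(\mathfrak b)^{-1}\mathfrak a \bar{\mathfrak b}$ rather than re-invoking the lemma, and it leaves implicit the unit adjustment in choosing a generator $\theta$ with $\theta \equiv k \pmod{N\mathcal O}$, which you rightly spell out.
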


\begin{proof}
We consider the homomorphism
\[ \Psi' : \mathcal I(\mathcal O_K, Nf) \longrightarrow \mathcal C_0(\mathcal O, N) \]
defined multiplicatively from the map taking $\mathfrak a$ to $[\mathfrak a \cap \mathcal O]$, where $\mathfrak a$ is an $\mathcal O_K$-ideal relatively prime to $Nf$. According to Proposition \ref{Proposition - norm-preserving isomorphism}, this map is well defined because $\mathfrak a \cap \mathcal O \in \mathcal I(\mathcal O, Nf) \subset \mathcal I(\mathcal O, N)$. The map is also surjective because Lemma \ref{Lemma - relatively prime for ideals} says that every ideal class in $\mathcal C_0(\mathcal O, N)$ contains a proper $\mathcal O$-ideal $\mathfrak b$ which is relatively prime to $Nf$ and because we have $\mathfrak b \mathcal O_K \cap \mathcal O = \mathfrak b$ by applying Proposition \ref{Proposition - norm-preserving isomorphism} again.

We claim that $\mathrm{ker}(\Psi') = \mathcal P_0(\mathcal O_K, Nf)$.

($\supset$) Let $\alpha = a + Nf \beta$ where $\beta \in \mathcal O_K$, $a \in \mathbb Z$, and $(a, Nf) = 1$. Since $f\beta \in \mathcal O$ and $(a, N) = 1$, we infer from Proposition \ref{Proposition - norm-preserving isomorphism} that
\[ \alpha \mathcal O_K \cap \mathcal O \ = \ (\alpha \mathcal O)\mathcal O_K \cap \mathcal O \ = \ \alpha \mathcal O \ \in \ \mathcal P_0(\mathcal O, N). \]

($\subset$) Let $\mathfrak a \mathfrak b^{-1} \in \mathrm{ker}(\Psi')$ where $\mathfrak a$, $\mathfrak b$ are $\mathcal O_K$-ideals relatively prime to $Nf$. Since $\mathrm{N}(\mathfrak b) \mathcal O_K = \mathfrak b \bar{\mathfrak b}$, we have $\mathfrak a \mathfrak b^{-1} = \mathrm{N}(\mathfrak b)^{-1} \mathfrak a \bar{\mathfrak b}$ and hence we see $\mathfrak a \bar{\mathfrak b} \in \mathrm{ker}(\Psi')$ and it is sufficient to show that $\mathfrak a \bar{\mathfrak b} \in \mathcal P_0(\mathcal O_K, Nf)$. Set $\mathfrak a \bar{\mathfrak b} \cap \mathcal O = \alpha \mathcal O$ where $\alpha = a + N\beta$, $\beta \in \mathcal O$, $a \in \mathbb Z$, and $(a, N) = 1$. Since $\mathcal O = \mathbb Z + f \mathcal O_K$, we may put $\alpha = b + Nf \gamma$ where $\gamma \in \mathcal O_K$, $b \in \mathbb Z$, and $(b, N) = 1$. Then we deduce that $(b, Nf) = 1$ because $(\mathrm{N}(\mathfrak a \bar{\mathfrak b}), Nf) = 1$ and
\[ \mathrm{N}(\mathfrak a \bar{\mathfrak b}) \ = \ \mathrm{N}(\mathfrak a \bar{\mathfrak b} \cap \mathcal O) \ = \ \mathrm{N}(\alpha \mathcal O) \ = \ \mathrm{N}(\alpha) \ = \ b^2 + Nf(\gamma + \bar{\gamma}) + N^2 f^2 \gamma \bar{\gamma}. \]
Therefore we obtain by applying Proposition \ref{Proposition - norm-preserving isomorphism} once more that
\[ \mathfrak a \bar{\mathfrak b} \ = \ (\mathfrak a \bar{\mathfrak b} \cap \mathcal O) \mathcal O_K \ = \ \alpha \mathcal O \mathcal O_K \ = \ \alpha \mathcal O_K \ \in \ \mathcal P_0(\mathcal O_K, Nf). \]
\end{proof}

\begin{corollary}
(1) Let $f, N$ be positive integers and let $\mathcal O$ (respectively, $\mathcal O'$) be the order of conductor $f$ (respectively, $Nf$) in an imaginary quadratic field $K$. Then we have
\[ \mathcal C_0(\mathcal O, N) \ \cong \ \mathcal C(\mathcal O'). \]

(2) Let $N$ be a positive integer and let $D \equiv 0, 1 \pmod 4$ be a negative integer. Then we have
\[ \mathcal C(D, \Gamma_0(N)) \ \cong \ \mathcal C(DN^2). \]
\end{corollary}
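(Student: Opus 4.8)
The plan is to derive both isomorphisms formally from the isomorphisms already established, the only substantive content being careful bookkeeping of conductors and discriminants.

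For part (1), I would apply the preceding theorem (the isomorphism $\Psi$) twice. Write $\mathcal O$ for the order of conductor $f$ and $\mathcal O'$ for the order of conductor $Nf$ in $K$. Applying that theorem to the order $\mathcal O$ and the integer $N$ produces $\mathcal C_0(\mathcal O_K, Nf) \cong \mathcal C_0(\mathcal O, N)$. Applying the very same theorem to the order $\mathcal O'$ --- whose conductor is $Nf$ --- together with the integer $1$ produces $\mathcal C_0(\mathcal O_K, Nf) \cong \mathcal C_0(\mathcal O', 1)$. Since $\mathcal C_0(\mathcal O', 1) = \mathcal C(\mathcal O')$ by the conventions fixed earlier, I would compose these two isomorphisms (inverting the first) to obtain $\mathcal C_0(\mathcal O, N) \cong \mathcal C(\mathcal O')$.

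For part (2), I would first invoke Theorem \ref{Theorem - isomorphism} to identify $\mathcal C(D, \Gamma_0(N)) \cong \mathcal C_0(\mathcal O, N)$, where $\mathcal O$ is the order of discriminant $D$; writing $f$ for its conductor we have $D = f^2 d_K$. Part (1) then yields $\mathcal C_0(\mathcal O, N) \cong \mathcal C(\mathcal O')$ with $\mathcal O'$ the order of conductor $Nf$, and the crucial numerical observation is that $\mathcal O'$ has discriminant $(Nf)^2 d_K = N^2 (f^2 d_K) = DN^2$. Finally, the case $N = 1$ of Theorem \ref{Theorem - isomorphism}, applied to the discriminant $DN^2$, gives $\mathcal C(DN^2) = \mathcal C(DN^2, \Gamma_0(1)) \cong \mathcal C_0(\mathcal O', 1) = \mathcal C(\mathcal O')$. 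Chaining the three isomorphisms delivers $\mathcal C(D, \Gamma_0(N)) \cong \mathcal C(DN^2)$.

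I do not expect a genuine obstacle here: the whole argument is a composition of previously proved isomorphisms, so the difficulty is entirely organizational. The one point demanding care is the double application of the preceding theorem in part (1) --- one must confirm that specializing the auxiliary integer to $1$ is permitted and recognize that $\mathcal C_0(\mathcal O', 1)$ is precisely the ordinary ideal class group $\mathcal C(\mathcal O')$, which is the only place the convention $\mathcal C_0(\mathcal O, 1) = \mathcal C(\mathcal O)$ enters.
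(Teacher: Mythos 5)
Your proposal is correct and follows exactly the paper's route: part (1) is the chain $\mathcal C_0(\mathcal O, N) \cong \mathcal C_0(\mathcal O_K, Nf) \cong \mathcal C_0(\mathcal O', 1) = \mathcal C(\mathcal O')$ obtained by two applications of the preceding theorem (with parameters $(\mathcal O, N)$ and $(\mathcal O', 1)$), and part (2) combines (1) with Theorem \ref{Theorem - isomorphism}, including its $N = 1$ case, via the conductor--discriminant bookkeeping $(Nf)^2 d_K = DN^2$. The paper compresses this into two lines, but its content is precisely your argument, including the convention $\mathcal C_0(\mathcal O', 1) = \mathcal C(\mathcal O')$ that the paper records explicitly beforehand.
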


It is also true that $\mathcal C(\mathcal O') \cong \mathcal C(\mathcal O', Nf)$ according to \cite[Proposition 7.19]{Cox}.

\begin{proof}
(1) By the above theorem, we readily see that
\[ \mathcal C_0(\mathcal O, N) \ \cong \ \mathcal C_0(\mathcal O_K, Nf) \ \cong \ \mathcal C_0(\mathcal O', 1) \ = \ \mathcal C(\mathcal O'). \]

(2) It follows from (1) and Theorem \ref{Theorem - isomorphism}.
\end{proof}

\section{The $N$-representations of integers by binary quadratic forms}

\begin{definition}
Let $f(x, y) \in \mathbb Z[x, y]$ be a binary quadratic form and let $N \in \mathbb N$. An integer $m$ is said to be $N$-represented by $f(x, y)$ if $m = f(x_0, y_0)$ for some $x_0, y_0 \in \mathbb Z$ with $(x_0, N) = 1$, $y_0 \equiv 0 \ (\bmod \ N)$. If $(x_0, y_0) = 1$ is further satisfied, then we say that $m$ is properly $N$-represented by $f(x, y)$.
\end{definition}

\begin{remark}
If $f(x, y)$ and $g(x, y)$ are $\Gamma_0(N)$-equivalent, then
\[
\{ f(x, y) \in \mathbb Z \, | \, (x, N) = 1, \, y \equiv 0 \, (\bmod \, N) \} \, = \, \{ g(x, y) \in \mathbb Z \, | \, (x, N) = 1, \, y \equiv 0 \, (\bmod \, N) \}.
\]
\end{remark}

\begin{lemma}
An integer $m$ is properly $N$-represented by $f(x, y)$ if and only if there exist $b, c \in \mathbb Z$ such that $f(x, y)$ is $\Gamma_0(N)$-equivalent to $mx^2 + bxy + cy^2$.
\end{lemma}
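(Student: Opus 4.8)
The plan is to mimic the classical correspondence between proper representations and forms with prescribed leading coefficient (Cox, Lemma~2.3), keeping careful track of the two congruence conditions $(x_0,N)=1$ and $y_0\equiv 0\pmod N$ so that the completing matrix lands in $\Gamma_0(N)$ rather than merely in $\mathrm{SL}_2(\mathbb Z)$. The key bridge in both directions is the identity $(f\cdot\gamma)(1,0)=f(a,c)$ for $\gamma=\left(\begin{smallmatrix} a & b \\ c & d\end{smallmatrix}\right)$, i.e.\ the leading coefficient of the transformed form is $f$ evaluated at the first column of $\gamma$.

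For the ``if'' direction, suppose $f$ is $\Gamma_0(N)$-equivalent to $g(x,y)=mx^2+bxy+cy^2$, say $g=f\cdot\gamma$ with $\gamma=\left(\begin{smallmatrix} a & b' \\ c' & d'\end{smallmatrix}\right)\in\Gamma_0(N)$. Evaluating at $(1,0)$ gives $m=g(1,0)=f(a,c')$, and I would then read off the required conditions directly from $\gamma$: since $\gamma\in\Gamma_0(N)$ we have $c'\equiv 0\pmod N$; since $\det\gamma=1$ we have $\gcd(a,c')=1$; and from $a d'-b'c'=1$ together with $c'\equiv 0\pmod N$ we get $a d'\equiv 1\pmod N$, hence $(a,N)=1$. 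These are exactly the conditions in the definition, so $m$ is properly $N$-represented by $f$ via the pair $(a,c')$.

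For the ``only if'' direction, suppose $m=f(x_0,y_0)$ with $(x_0,N)=1$, $y_0\equiv 0\pmod N$, and $\gcd(x_0,y_0)=1$. Because $\gcd(x_0,y_0)=1$, I can choose $b,d\in\mathbb Z$ with $x_0 d-y_0 b=1$, so that $\gamma:=\left(\begin{smallmatrix} x_0 & b \\ y_0 & d\end{smallmatrix}\right)\in\mathrm{SL}_2(\mathbb Z)$; and since $y_0\equiv 0\pmod N$, in fact $\gamma\in\Gamma_0(N)$. Then $f\cdot\gamma$ has leading coefficient $(f\cdot\gamma)(1,0)=f(x_0,y_0)=m$, so $f\cdot\gamma=mx^2+b''xy+c''y^2$ for suitable $b'',c''\in\mathbb Z$, exhibiting $f$ as $\Gamma_0(N)$-equivalent to a form of the desired shape.

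The argument is essentially routine once this dictionary is set up, and I expect no genuine obstacle beyond the bookkeeping of the congruence conditions. The only point deserving attention is the observation that the hypothesis $y_0\equiv 0\pmod N$ on the first column is precisely what forces the completing matrix into $\Gamma_0(N)$ (as opposed to all of $\mathrm{SL}_2(\mathbb Z)$), while the primitivity $\gcd(x_0,y_0)=1$ is exactly what permits the completion in the first place; verifying that these two facts dovetail correctly is the whole content of the lemma.
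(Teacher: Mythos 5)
Your proposal is correct and follows essentially the same route as the paper: in the ``if'' direction you evaluate the transformed form at $(1,0)$ to get $m=f(a,c')$ and read off $c'\equiv 0\pmod N$, $\gcd(a,c')=1$, and $(a,N)=1$ from membership in $\Gamma_0(N)$ and the determinant (the paper asserts these same facts, while you spell out the step $ad'\equiv 1\pmod N$); in the ``only if'' direction you complete the column $(x_0,y_0)$ with $\gcd(x_0,y_0)=1$ to a matrix that lies in $\Gamma_0(N)$ precisely because $y_0\equiv 0\pmod N$, exactly as the paper does with $\gamma=\left(\begin{smallmatrix} x_0 & -v \\ y_0 & u\end{smallmatrix}\right)$. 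No gaps.
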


\begin{proof}
($\Leftarrow$) By definition, there exists $(\begin{smallmatrix} a & b \\ c & d \end{smallmatrix}) \in \Gamma_0(N)$ such that $mx^2 + bxy + cy^2 = f(ax + by, cx + dy)$. Substituting $x = 1$ and $y = 0$, we see that $m = f(a, c)$. Since $(a, N) = 1$, $c \equiv 0 \ (\bmod \ N)$, and $(a, c) = 1$, $m$ is properly $N$-represented by $f(x, y)$.

($\Rightarrow$) By definition, there exist $x_0, y_0 \in \mathbb Z$ such that $m = f(x_0, y_0)$, $(x_0, N) = 1$, $y_0 \equiv 0 \ (\bmod \ N)$ and $(x_0, y_0) = 1$. Since $(x_0, y_0) = 1$, we see that $ux_0 + vy_0 = 1$ for some $u, v \in \mathbb Z$. Put $\gamma = (\begin{smallmatrix} x_0 & -v \\ y_0 & u \end{smallmatrix}) \in \Gamma_0(N)$. Then $f(x, y)$ is $\Gamma_0(N)$-equivalent to $f(x_0x - vy, y_0x + uy)$. Because the coefficient of $x^2$ in $f(x_0x - vy, y_0x + uy)$ is $f(x_0, y_0) = m$, we are done.
\end{proof}

\begin{lemma}
Let $D \equiv 0, 1 \ (\bmod \ 4)$ be an integer, $m$ an odd integer with $(m, D) = 1$, and $N \in \mathbb N$. Then $m$ is properly $N$-represented by a primitive form of discriminant $D$ if and only if $D$ is a quadratic residue modulo $m$.
\end{lemma}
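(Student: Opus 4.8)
The plan is to reduce the statement to the classical criterion (the case $N=1$) by invoking the previous lemma, and to observe that the congruence conditions defining an $N$-representation are automatically satisfied by the principal representation $m = Q(1,0)$.

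For the forward direction, I would start by assuming that $m$ is properly $N$-represented by a primitive form $f$ of discriminant $D$. By the previous lemma, $f$ is $\Gamma_0(N)$-equivalent to a form $Q(x,y) = mx^2 + bxy + cy^2$ for some $b, c \in \mathbb Z$. Since $\Gamma_0(N) \subset \mathrm{SL}_2(\mathbb Z)$, the equivalence preserves the discriminant, so $b^2 - 4mc = D$. Reducing modulo $m$ gives $b^2 \equiv D \pmod{m}$, which is precisely the assertion that $D$ is a quadratic residue modulo $m$. (Primitivity of $f$ is not even needed for this implication.)

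For the converse, suppose $D \equiv b_0^2 \pmod m$. First I would fix the parity: since $D \equiv 0, 1 \pmod 4$, there is a choice of residue for $b$ modulo $2$ making $b^2 \equiv D \pmod 4$, namely $b$ even when $D \equiv 0$ and $b$ odd when $D \equiv 1$. Because $m$ is odd we have $\gcd(4, m) = 1$, so the Chinese remainder theorem produces an integer $b$ with $b \equiv b_0 \pmod m$ and the prescribed parity; then $b^2 \equiv D \pmod{4m}$. Setting $c = (b^2 - D)/(4m) \in \mathbb Z$, the form $Q(x,y) = mx^2 + bxy + cy^2$ has discriminant $b^2 - 4mc = D$ and represents $m$ through $(x,y) = (1,0)$.

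It then remains to verify two routine points. First, $Q$ is primitive: any prime $p$ dividing $\gcd(m, b, c)$ would divide $D = b^2 - 4mc$, contradicting $(m, D) = 1$. Second, the representation $m = Q(1, 0)$ is a proper $N$-representation, since $x_0 = 1$ satisfies $(x_0, N) = 1$, $y_0 = 0$ satisfies $y_0 \equiv 0 \pmod N$, and $(x_0, y_0) = 1$. Hence $m$ is properly $N$-represented by the primitive form $Q$ of discriminant $D$. I do not anticipate any genuine obstacle: the only delicate step is the parity adjustment guaranteeing $4m \mid b^2 - D$, and the key conceptual observation is that the integer $N$ plays no essential role, because the distinguished representation by $(1,0)$ meets the congruence conditions for free.
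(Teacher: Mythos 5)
Your proof is correct and follows essentially the same route as the paper's: the forward direction via the preceding lemma together with invariance of the discriminant under $\Gamma_0(N)$-equivalence, and the converse by adjusting the parity of $b$ (using that $m$ is odd) to obtain $b^2 \equiv D \pmod{4m}$, forming $mx^2 + bxy + cy^2$, and deducing primitivity from $(m, D) = 1$. The points you spell out explicitly --- the Chinese remainder step and the observation that the representation by $(x_0, y_0) = (1, 0)$ automatically satisfies the congruence conditions of a proper $N$-representation --- are exactly what the paper leaves implicit.
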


\begin{proof}
($\Rightarrow$) By the preceding lemma, we may assume that $m$ is properly $N$-represented by a form $mx^2 + bxy + cy^2$ of discriminant $D$. Since $D = b^2 - 4mc$, $D$ is a quadratic residue modulo $m$.

($\Leftarrow$) Suppose $D \equiv b^2 \ (\bmod \ m)$ for some $b \in \mathbb Z$. Since $m$ is odd, we may assume that $D$ and $b$ have the same parity. From $D \equiv 0, 1 \ (\bmod \ 4)$, we infer that $D \equiv b^2 \ (\bmod \ 4m)$, i.e. that $D = b^2 - 4mc$ for some $c \in \mathbb Z$. Then $m$ is properly $N$-represented by the form $mx^2 + bxy + cy^2$ of discriminant $D$. Note that $mx^2 + bxy + cy^2$ is primitive because $(m, D) = 1$.
\end{proof}

\begin{corollary}
Given a positive integer $m$, let $p$ denote an odd prime not dividing $m$. Then for any $N \in \mathbb N$, we obtain that $(\frac{-m}{p}) = 1$ if and only if $p$ is $N$-represented by a primitive form of discriminant $-4m$.
\end{corollary}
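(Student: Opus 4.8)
The plan is to reduce the statement to the preceding lemma, which characterizes \emph{proper} $N$-representation of an odd integer in terms of a quadratic residue condition, and then to observe that for a prime the notions of $N$-representation and proper $N$-representation coincide.

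First I would apply the preceding lemma with the roles of its ``odd integer'' and ``discriminant'' played by $p$ and $D = -4m$, respectively. The hypotheses are met: $p$ is odd, $D = -4m \equiv 0 \pmod 4$, and $(p, D) = 1$, since $p$ is an odd prime not dividing $m$ and hence $(p, 4m) = 1$. The lemma then yields that $p$ is properly $N$-represented by a primitive form of discriminant $-4m$ if and only if $-4m$ is a quadratic residue modulo $p$. Next I would simplify this residue condition: because $p \nmid 4m$, the symbol $\left( \frac{-4m}{p} \right)$ takes a value in $\{ \pm 1 \}$, and multiplicativity of the Legendre symbol together with $\left( \frac{4}{p} \right) = 1$ gives $\left( \frac{-4m}{p} \right) = \left( \frac{-m}{p} \right)$. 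Thus ``$-4m$ is a quadratic residue modulo $p$'' is precisely the assertion $\left( \frac{-m}{p} \right) = 1$.

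The remaining point---and the only one requiring an idea rather than bookkeeping---is to bridge the gap between proper $N$-representation and $N$-representation, since the corollary is phrased in terms of the latter. One direction is immediate: a proper $N$-representation is in particular an $N$-representation. For the converse, suppose $p = f(x_0, y_0)$ is an $N$-representation, with $f$ a form, and set $d = \gcd(x_0, y_0)$. Writing $x_0 = d x_1$ and $y_0 = d y_1$ and using that $f$ is homogeneous of degree $2$, we get $f(x_0, y_0) = d^2 f(x_1, y_1)$, so $d^2$ divides $p$. Since $p$ is prime and hence not a perfect square, this forces $d = 1$, so the representation is automatically proper. Therefore the two notions coincide for the prime $p$.

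Chaining the three equivalences---$\left( \frac{-m}{p} \right) = 1$ $\Longleftrightarrow$ $-4m$ is a quadratic residue modulo $p$ $\Longleftrightarrow$ $p$ is properly $N$-represented by a primitive form of discriminant $-4m$ $\Longleftrightarrow$ $p$ is $N$-represented by such a form---completes the argument. I expect the quadratic-reciprocity-free Legendre symbol manipulation and the verification of the lemma's hypotheses to be entirely routine, so the genuine content is the squarefreeness observation in the last paragraph, which upgrades $N$-representation to proper $N$-representation for a prime.
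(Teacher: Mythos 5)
Your proposal is correct and follows essentially the same route as the paper: the paper's proof simply cites the preceding lemma together with the identity $(\frac{-4m}{p}) = (\frac{-m}{p})$. The only difference is that you explicitly verify the bridge between $N$-representation and proper $N$-representation via $d^2 \mid p$ forcing $\gcd(x_0, y_0) = 1$, a step the paper leaves implicit in its ``follows immediately.''
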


\begin{proof}
This follows immediately from the preceding lemma because $-4m$ is a quadratic residue modulo $p$ if and only if $(\frac{-4m}{p}) = (\frac{-m}{p}) = 1$.
\end{proof}

\begin{proposition}
Given a positive integer $m$, let $p$ denote an odd prime not dividing $m$. Then for any $N \in \mathbb N$, we obtain that $(\frac{-m}{p}) = 1$ if and only if $p$ is $N$-represented by one of the $\Gamma_0(N)$-reduced forms of discriminant $-4m$.
\end{proposition}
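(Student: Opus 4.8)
The plan is to deduce this proposition directly from the preceding corollary together with the reduction theory for $\Gamma_0(N)$-equivalence. The only discrepancy between the two statements is that the corollary produces $N$-representation by \emph{some} primitive form of discriminant $-4m$, whereas here we must exhibit a $\Gamma_0(N)$-\emph{reduced} representative. The bridge is the invariance of $N$-representation under $\Gamma_0(N)$-equivalence recorded in the remark at the start of this section.

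First I would dispose of the easy direction ($\Leftarrow$). If $p$ is $N$-represented by a $\Gamma_0(N)$-reduced form $Q'$ of discriminant $-4m$, then, since $\Gamma_0(N)\text{-}\mathrm{RF}(-4m) \subset \mathrm{QF}(-4m)$ consists of primitive forms, $p$ is in particular $N$-represented by a primitive form of discriminant $-4m$. The preceding corollary then gives $\left( \frac{-m}{p} \right) = 1$.

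For the forward direction ($\Rightarrow$), suppose $\left( \frac{-m}{p} \right) = 1$. By the preceding corollary there is a primitive form $Q \in \mathrm{QF}(-4m)$ that $N$-represents $p$. By part (2) of Theorem 2.1, the form $Q$ is $\Gamma_0(N)$-equivalent to a unique $\Gamma_0(N)$-reduced form $Q'$, and since $\Gamma_0(N)$-equivalence preserves the discriminant, $Q' \in \Gamma_0(N)\text{-}\mathrm{RF}(-4m)$. Because $Q$ and $Q'$ are $\Gamma_0(N)$-equivalent, the remark at the start of this section yields
\[
\{ Q(x,y) \mid (x, N) = 1,\ y \equiv 0 \ (\bmod\ N) \} = \{ Q'(x,y) \mid (x, N) = 1,\ y \equiv 0 \ (\bmod\ N) \},
\]
so the integers $N$-represented by $Q$ and by $Q'$ coincide. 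As $p$ lies in the left-hand set, it is $N$-represented by the $\Gamma_0(N)$-reduced form $Q'$, completing the argument.

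There is no serious obstacle here: the proposition is essentially a bookkeeping refinement of the corollary, and the reduction-theoretic input (existence and discriminant-preservation of the reduced representative) is already in hand. The one point that genuinely requires care—and which is exactly what the remark supplies—is that the $N$-representation condition $(x,N)=1$, $y \equiv 0 \ (\bmod\ N)$ on the representing variables is a $\Gamma_0(N)$-invariant rather than merely an $\mathrm{SL}_2(\mathbb Z)$-invariant; it is precisely for this reason that one must reduce within $\Gamma_0(N)$, and not within the full modular group, for the statement to hold.
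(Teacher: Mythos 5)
Your argument is correct and is precisely the routine deduction the paper leaves implicit (its entire proof reads ``The proof is clear''): combine the preceding corollary with Theorem 2.1(2) and the $\Gamma_0(N)$-invariance of $N$-representation from the remark. The only micro-step you elide is that the form supplied by the corollary is positive definite (it has negative discriminant and represents $p > 0$), hence lies in $\mathrm{QF}(-4m)$ so that the reduction theorem applies.
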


\begin{proof}
The proof is clear.
\end{proof}

\begin{theorem}
Let $D \equiv 0, 1 \ (\bmod \ 4)$ be a negative integer, $N \in \mathbb N$, and let $\chi:(\mathbb Z / D\mathbb Z)^\times \rightarrow \{ \pm 1 \}$ be a homomorphism given by $\chi([m]) = (\frac{D}{m})$, where $(\frac{\cdot}{\cdot})$ denotes the Kronecker symbol. Then for any odd prime $p$ not dividing $D$, the residue class $[p] \in \mathrm{ker}(\chi)$ if and only if $p$ is $N$-represented by one of the $\Gamma_0(N)$-reduced forms of discriminant $D$.
\end{theorem}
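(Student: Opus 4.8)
The plan is to establish the stated biconditional as a short chain of equivalences, drawing on the quadratic-residue characterization of proper $N$-representation, the automatic properness of prime representations, and the passage from arbitrary primitive forms to $\Gamma_0(N)$-reduced ones.

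First I would unwind the hypothesis on $\chi$. By definition, $[p] \in \mathrm{ker}(\chi)$ means $\chi([p]) = \left(\frac{D}{p}\right) = 1$. Since $p$ is an odd prime not dividing $D$, the Kronecker symbol $\left(\frac{D}{p}\right)$ coincides with the Legendre symbol, so this condition is precisely the statement that $D$ is a quadratic residue modulo $p$. I would then apply the earlier lemma (with $m = p$, which is odd and coprime to $D$): $D$ is a quadratic residue modulo $p$ if and only if $p$ is properly $N$-represented by a primitive form of discriminant $D$. At this link I would observe that for a prime the word ``properly'' is automatic: if $p = Q(x_0, y_0)$ and $d = \gcd(x_0, y_0)$, then $d^2 \mid p$ forces $d = 1$, so $p$ is $N$-represented by a primitive form of discriminant $D$ exactly when it is properly so.

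The final step is to pass to reduced forms. If $p$ is $N$-represented by some primitive form $Q \in \mathrm{QF}(D)$, then by the finiteness theorem $Q$ is $\Gamma_0(N)$-equivalent to a unique $\Gamma_0(N)$-reduced form $Q'$, and by the remark that $\Gamma_0(N)$-equivalence preserves the set of values taken on arguments with $(x, N) = 1$ and $y \equiv 0 \pmod N$, the prime $p$ is $N$-represented by $Q'$ as well. Conversely, every $\Gamma_0(N)$-reduced form of discriminant $D$ is itself a primitive form of discriminant $D$, so $N$-representation by a reduced form is a special case of $N$-representation by a primitive form. Chaining these equivalences yields the theorem.

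The main obstacle is essentially bookkeeping rather than depth: one must keep the two reductions straight, namely proper-versus-improper representation and arbitrary-versus-reduced forms, and verify that the Kronecker symbol agrees with the Legendre symbol in the relevant range. Each individual link is routine once the correct earlier result is invoked, but the argument only goes through cleanly if the properness observation for primes and the $\Gamma_0(N)$-invariance of the value set are both in place.
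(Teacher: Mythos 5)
Your proposal is correct and follows exactly the paper's proof, which is the same chain of equivalences $[p] \in \mathrm{ker}(\chi) \Leftrightarrow (\tfrac{D}{p}) = 1 \Leftrightarrow D$ is a quadratic residue modulo $p$ $\Leftrightarrow$ $p$ is $N$-represented by a primitive form of discriminant $D$ $\Leftrightarrow$ $p$ is $N$-represented by a $\Gamma_0(N)$-reduced form, stated there without further justification. You merely make explicit the links the paper leaves implicit --- properness being automatic for a prime since $\gcd(x_0,y_0)^2 \mid p$, and the passage to reduced forms via the reduction theorem together with the $\Gamma_0(N)$-invariance of the $N$-represented value set --- all of which are the intended ingredients.
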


\begin{proof}
Notice that
\begin{eqnarray*}
& & [p] \in \mathrm{ker}(\chi) \\
& \Leftrightarrow & (\frac{D}{p}) = 1 \\
& \Leftrightarrow & \mbox{$D$ is a quadratic residue modulo $p$} \\
& \Leftrightarrow & \mbox{$p$ is $N$-represented by a primitive form of discriminant $D$} \\
& \Leftrightarrow & \mbox{$p$ is $N$-represented by one of the $\Gamma_0(N)$-reduced forms of discriminant $D$.}
\end{eqnarray*}
\end{proof}

\begin{definition}
Given a negative integer $D \equiv 0, 1 \ (\bmod \ 4)$, the principal form is defined to be
\[
\left\{
\begin{array}{ll}
x^2 - \frac{D}{4}y^2 & \mbox{if } D \equiv 0 \ (\bmod \ 4) \\
x^2 + xy + \frac{1 - D}{4}y^2 & \mbox{if } D \equiv 1 \ (\bmod \ 4).
\end{array}
\right.
\]
\end{definition}

\begin{lemma}
Let $f(x, y) = ax^2 + bxy + cy^2$ be a primitive form and let $n, N \in \mathbb N$ with $(a, N) = 1$. Then there exists $m \in \mathbb N$ such that $(m, nN) = 1$ and $m$ is properly $N$-represented by $f(x, y)$.
\end{lemma}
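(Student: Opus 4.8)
The plan is to reduce everything to Lemma~\ref{Lemma - relatively prime for quadratic forms} by exploiting the fact that the leading coefficient of any $\Gamma_0(N)$-translate of $f$ is automatically a proper $N$-representation. Indeed, suppose $\gamma = \left(\begin{smallmatrix} p & q \\ r & s \end{smallmatrix}\right) \in \Gamma_0(N)$ and set $Q' := f \cdot \gamma$ and $m := Q'(1,0)$. Since $Q'(x,y) = f(px + qy, rx + sy)$, we have $m = f(p, r)$. The relations $ps - qr = 1$ and $r \equiv 0 \pmod N$ force $(p, r) = 1$ and $(p, N) = 1$, so by definition $m$ is properly $N$-represented by $f$. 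Thus it suffices to find $\gamma \in \Gamma_0(N)$ for which $m = (f \cdot \gamma)(1,0)$ is relatively prime to $nN$; that $m$ lies in $\mathbb N$ is free, since $f$, hence $f \cdot \gamma$, is positive-definite.

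The key observation is that coprimality to $N$ comes for free from the hypothesis $(a, N) = 1$. With $m = f(p, r)$ as above and $r \equiv 0 \pmod N$, we have
\[ m \;=\; a p^2 + b p r + c r^2 \;\equiv\; a p^2 \pmod N, \]
and since $(a, N) = (p, N) = 1$ this gives $(m, N) = 1$ for every such $\gamma$. Consequently $(m, nN) = 1$ will follow as soon as $m$ is relatively prime to the part of $n$ that survives after deleting the primes dividing $N$.

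So I would let $n_1$ denote the largest divisor of $n$ relatively prime to $N$, so that $(n_1, N) = 1$ and every prime divisor of $n$ either divides $n_1$ or divides $N$. Applying Lemma~\ref{Lemma - relatively prime for quadratic forms} with $M = n_1$ (legitimate precisely because $(n_1, N) = 1$) produces $Q' = f \cdot \gamma$, $\gamma \in \Gamma_0(N)$, whose leading coefficient $m := Q'(1,0)$ satisfies $(m, n_1) = 1$. By the previous two paragraphs $m$ is properly $N$-represented by $f$, lies in $\mathbb N$, and satisfies $(m, N) = 1$; combined with $(m, n_1) = 1$ and the fact that any prime dividing both $n$ and $N$ divides $N$, we obtain $(m, n) = 1$ and hence $(m, nN) = 1$, as required.

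The only real obstacle is the hypothesis $(M, N) = 1$ in Lemma~\ref{Lemma - relatively prime for quadratic forms}, which forbids taking $M = nN$ directly; the resolution is exactly the splitting above, separating the $N$-coprime part $n_1$ of $n$ (handled by the earlier lemma) from the primes shared with $N$ (handled for free by $(a, N) = 1$). Should one wish to avoid invoking Lemma~\ref{Lemma - relatively prime for quadratic forms}, the same $m$ can be built by a direct Chinese-remainder argument, choosing the residues of $(x_0, y_0)$ prime-by-prime—using $y_0 \equiv 0$ and $x_0 \not\equiv 0 \pmod p$ for $p \mid N$, and the primitivity trichotomy ($p \nmid a$, or $p \nmid c$, or $p \nmid b$) for $p \mid n$ with $p \nmid N$—but there the extra care lies in simultaneously arranging $(x_0, y_0) = 1$, which is precisely what the cleaner reduction sidesteps.
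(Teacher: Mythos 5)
Your proof is correct, but it takes a genuinely different route from the paper's. The paper proves this lemma from scratch: for each prime divisor $p_i$ of $n$ not dividing $N$ it picks a pair $(x_i, y_i)$ with $y_i \in \{0, N\}$ and $p_i \nmid f(x_i, y_i)$ (using primitivity via $f(1,0)$, $f(1,N)$, $f(1+N,N)$), then assembles $x$ by the Chinese remainder theorem with $x \equiv 1 \pmod N$, chooses an auxiliary prime $q$ by Dirichlet's theorem, and sets $y = N p_{k+1} \cdots p_\ell q$, $m = f(x,y)$, verifying $(x,y)=1$ and $(m,nN)=1$ directly. You instead reduce to Lemma~\ref{Lemma - relatively prime for quadratic forms} applied with $M = n_1$, the prime-to-$N$ part of $n$, combined with two observations, both valid: the leading coefficient $m = f(p,r)$ of any $\Gamma_0(N)$-translate of $f$ is automatically properly $N$-represented (this is in effect the $\Leftarrow$ direction of the first lemma of Section~3, since $ps - qr = 1$ and $N \mid r$ force $(p,r) = (p,N) = 1$), and $(m,N)=1$ comes free from $m \equiv ap^2 \pmod N$ --- which is exactly where the hypothesis $(a,N)=1$ enters, correctly compensating for the constraint $(M,N)=1$ in the cited lemma. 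Since Lemma~\ref{Lemma - relatively prime for quadratic forms} is established independently in Section~2, there is no circularity. Comparing the two: your reduction is shorter and avoids duplicating the CRT-plus-Dirichlet machinery, which the paper's proof essentially repeats in a variant form; the paper's self-contained construction, on the other hand, delivers the slightly stronger conclusion that the representing pair can be taken with $x \equiv 1 \pmod N$ rather than merely $(x,N)=1$, a refinement in the spirit of the $\Gamma_1(N)$ condition in case~(ii) of the introduction, though the lemma as stated does not require it. One small point you handled properly: positive-definiteness of $f$ (implicit in the paper, since everything lives in $\mathrm{QF}(D)$ with $D<0$) is needed both for $m \in \mathbb N$ and for Lemma~\ref{Lemma - relatively prime for quadratic forms} to be applicable at all.
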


\begin{proof}
Given a prime $p$ not dividing $N$, at least one of $f(1, 0)$, $f(1, N)$, $f(1 + N, N)$ is not divisible by $p$. Hence for each prime divisor $p_i$ ($i = 1, \ldots, \ell$) of $n$ not dividing $N$, we deduce by relabeling the indices if necessary that $p_i$ does not divide $f(x_i, y_i)$ for all $i$, where
\begin{eqnarray*}
x_i = 1 \mbox{ or } 1 + N, & & y_i = N \qquad (1 \leq i \leq k), \\
x_i = 1, & & y_i = 0 \qquad (k + 1 \leq i \leq \ell).
\end{eqnarray*}
By the Chinese remainder theorem, there exists $x \in \mathbb Z$ such that
\[
\left\{
\begin{array}{ll}
x \equiv x_i & (\bmod \ {p_i}) \qquad (1 \leq i \leq \ell), \\
x \equiv 1 & (\bmod \ N).
\end{array}
\right.
\]
By Dirichlet's theorem on arithmetic progressions, we can find a prime $q$ such that $(q, x) = 1$ and $p_{k + 1} \cdots p_\ell q \equiv 1 \ (\bmod \ {p_1 \cdots p_k})$.

Put $y = N p_{k + 1} \cdots p_\ell q$ and $m = f(x, y)$. Then
\begin{eqnarray*}
n \equiv f(x_i, y_i) \not \equiv 0 \ (\bmod \ {p_i}) & & (1 \leq i \leq \ell), \\
n \equiv f(1, 0) = a \ (\bmod \ N), & & 
\end{eqnarray*}
whence $(m, nN) = 1$. Moreover, we have $x \equiv 1 \ (\bmod \ N)$, $y \equiv 0 \ (\bmod \ N)$, and $(x, y) = 1$ because
\[ (x, p_i) \ = \ (x_i, p_i) \ = \ 1 \qquad (k + 1 \leq i \leq \ell). \]
\end{proof}

\begin{lemma}
Given a negative integer $D \equiv 0, 1 \ (\bmod \ 4)$, let $\chi : (\mathbb Z / D\mathbb Z)^\times \rightarrow \{ \pm 1 \}$ be a homomorphism given by $\chi ([m]) = (\frac{D}{m})$. Let $N \in \mathbb N$ and let $f(x, y) = ax^2 + bxy + cy^2$ be a primitive form of discriminant $D$ with $(a, N) = 1$. Then

(1) $H := \{ [m] \in (\mathbb Z / D\mathbb Z)^\times \, | \, \mbox{$m$ is $N$-represented by the principal form of discriminant $D$} \}$ is a subgroup of $\mathrm{ker}(\chi)$.

(2) $\{ [m] \in (\mathbb Z / D\mathbb Z)^\times \, | \, \mbox{$m$ is $N$-represented by $f(x, y)$} \}$ is a coset of $H$ in $\mathrm{ker}(\chi)$.
\end{lemma}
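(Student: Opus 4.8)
The plan is to adapt the classical genus-theoretic argument (cf.\ Cox \cite{Cox}) to the $N$-representation setting. Throughout write $f_0$ for the principal form of discriminant $D$, and for a primitive form $g = \alpha x^2 + \beta xy + \gamma y^2$ of discriminant $D$ with $(\alpha, N) = 1$ put
\[
S_g \ := \ \{ [m] \in (\mathbb Z / D\mathbb Z)^\times \ | \ m \text{ is $N$-represented by } g \},
\]
so that $H = S_{f_0}$ and the set in (2) is $S_f$. The whole argument rests on two ingredients: first, that $S_g \subseteq \mathrm{ker}(\chi)$ for every such $g$; and second, a multiplicativity property, namely that $N$-representation is compatible with the Dirichlet composition. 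Granting these, parts (1) and (2) reduce to elementary group theory.

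For the multiplicativity I would argue through ideals, exactly as in the proof of Theorem \ref{Theorem - isomorphism}. For $g = \alpha x^2 + \beta xy + \gamma y^2$ one has the norm identity $\alpha\, g(x, y) = \mathrm{N}(\alpha x + \tfrac{\beta + \sqrt D}{2} y)$, so that $\alpha x + \tfrac{\beta + \sqrt D}{2} y$ lies in the proper ideal $\mathfrak b_g := \mathbb Z \alpha + \mathbb Z \tfrac{\beta + \sqrt D}{2}$ of norm $\alpha$. If $m = g(x_1, y_1)$ is an $N$-representation, i.e.\ $(x_1, N) = 1$ and $N \mid y_1$, then $\delta_1 := \alpha x_1 + \tfrac{\beta + \sqrt D}{2} y_1 \equiv \alpha x_1 \pmod{N\mathcal O}$ with $(\alpha x_1, N) = 1$. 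Doing the same for a second primitive form $g'$, whose leading coefficient we may take coprime to both $\alpha$ and $N$ by Lemma \ref{Lemma - relatively prime for quadratic forms}, and a value $m' = g'(x_2, y_2)$, the product $\delta_1 \delta_2$ lies in $\mathfrak b_g \mathfrak b_{g'} = \mathfrak b_{g * g'}$, the ideal attached to the composition $g * g'$, and satisfies $\delta_1 \delta_2 \equiv \alpha\alpha' x_1 x_2 \pmod{N\mathcal O}$. Reading off the coordinates of $\delta_1\delta_2$ in the $\mathbb Z$-basis of $\mathfrak b_{g*g'}$ then exhibits $mm'$ as a value $(g*g')(X, Y)$ with $(X, N) = 1$ and $N \mid Y$; that is, $[m][m'] \in S_{g*g'}$. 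Since $N$-representation is a $\Gamma_0(N)$-class invariant, this says $S_g \cdot S_{g'} \subseteq S_{[g][g']}$ inside $\mathcal C(D, \Gamma_0(N))$.

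One further elementary observation is the symmetry $S_g = S_{\bar g}$, where $\bar g = \alpha x^2 - \beta xy + \gamma y^2$ is the opposite form: indeed $\bar g(x, -y) = g(x, y)$, and replacing $y$ by $-y$ preserves the conditions $(x, N) = 1$, $N \mid y$. Now (1) is immediate: $[1] \in H$ because $f_0(1, 0) = 1$ is an $N$-representation; $H$ is closed under multiplication because $f_0 * f_0 \sim f_0$, whence $S_{f_0} \cdot S_{f_0} \subseteq S_{f_0}$; and a nonempty multiplicatively closed subset of the finite group $\mathrm{ker}(\chi)$ is a subgroup. For (2), the preceding lemma (applied with $n = |D|$) furnishes a value $m_0$ of $f$ with $(m_0, D) = 1$, so $S_f \neq \emptyset$; fix $[m_0] \in S_f$. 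From $f * f_0 \sim f$ we get $[m_0] H \subseteq S_f$. From $S_f = S_{\bar f}$ together with $[\bar f] = [f]^{-1}$ (so $f * \bar f \sim f_0$) we get $S_f \cdot S_f \subseteq H$; in particular $[m_0]^2 \in H$, hence $[m_0]^{-1} H = [m_0] H$, and for any $[m] \in S_f$ the relation $[m][m_0] \in H$ gives $[m] \in [m_0]^{-1} H = [m_0] H$. Therefore $S_f = [m_0] H$ is a coset of $H$ in $\mathrm{ker}(\chi)$.

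It remains to justify $S_g \subseteq \mathrm{ker}(\chi)$. For $m = g(x_1, y_1)$ with $(m, D) = 1$, setting $d = \gcd(x_1, y_1)$ (which is prime to $N$) writes $m = d^2 m'$ with $m'$ properly $N$-represented by $g$ and $(m', D) = 1$; the quadratic-residue criterion proved above shows $D$ is a square modulo $m'$, and multiplicativity of the Kronecker symbol yields $\chi([m]) = \chi([d])^2 \chi([m']) = 1$. The step I expect to be the main obstacle is the bookkeeping inside the multiplicativity engine: one must verify that the $N$-conditions $(x, N) = 1$ and $N \mid y$ really do transport, under the ideal multiplication $\mathfrak b_g \mathfrak b_{g'} = \mathfrak b_{g*g'}$, into the analogous conditions on the coordinates $(X, Y)$ of the product. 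The congruence $\delta_1\delta_2 \equiv \alpha\alpha' x_1 x_2 \pmod{N\mathcal O}$ combined with $(\alpha\alpha', N) = 1$ is exactly what makes this work, but confirming it requires keeping careful track of the chosen bases of $\mathcal O$ and of $\mathfrak b_{g*g'}$; a secondary subtlety is that the even-$m$ case of the $\mathrm{ker}(\chi)$ containment must be handled with the Kronecker symbol rather than the Legendre symbol.
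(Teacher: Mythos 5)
Your proof is correct, but it takes a genuinely different route from the paper's. The paper argues entirely with explicit identities: closure of $H$ in part (1) comes from the two Brahmagupta-type composition identities for the principal form, and part (2) is proved by identifying the value set of $f$ as exactly the coset $[a]^{-1}H$, using $af(x,y) = (ax + b'y)^2 - \tfrac{D}{4}y^2$ (resp.\ its odd-discriminant analogue) in both directions --- including explicitly solving $f(x,y) \equiv c \ (\bmod \ D)$ with $(x, N) = 1$, $N \mid y$ for the reverse inclusion. You instead prove the general multiplicativity $S_g \cdot S_{g'} \subseteq S_{g \ast g'}$ via the ideal machinery of Section 2 (the claim $\mathfrak b_g \mathfrak b_{g'} = \mathfrak b_{g \ast g'}$ from the proof of Theorem \ref{Theorem - isomorphism}), together with $S_{\bar g} = S_g$ and the group structure of $\mathcal C(D, \Gamma_0(N))$, and then finish by pure group theory. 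The step you flagged does close: writing $\delta_1\delta_2 = \alpha\alpha' X + \tfrac{B + \sqrt D}{2}Y$ and expanding in the basis $\{ 1, \tfrac{D + \sqrt D}{2} \}$ of $\mathcal O$ (note $\tfrac{B + \sqrt D}{2} = \tfrac{B - D}{2} + \tfrac{D + \sqrt D}{2}$ with $B \equiv D \ (\bmod \ 2)$), the congruence $\delta_1 \delta_2 \equiv \alpha\alpha' x_1 x_2 \ (\bmod \ N\mathcal O)$ forces first $N \mid Y$ and then $\alpha\alpha' X \equiv \alpha\alpha' x_1 x_2 \ (\bmod \ N)$, hence $(X, N) = 1$, while the norm identity gives $(g \ast g')(X, Y) = mm'$. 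Two points deserve explicit mention in a final write-up: the composition $f \ast \bar f$ is not directly defined (the gcd condition fails since both leading coefficients equal $a$), so, as you indicate, one must first replace $\bar f$ by a $\Gamma_0(N)$-equivalent form with leading coefficient coprime to $aN$ via Lemma \ref{Lemma - relatively prime for quadratic forms} and invoke the $\Gamma_0(N)$-class invariance of $S_g$; and $[\bar f] = [f]^{-1}$ is the corollary to Theorem \ref{Theorem - isomorphism}. As for what each approach buys: the paper's is more elementary and self-contained, and produces the explicit coset representative $[a]^{-1}$; yours is more conceptual, establishing in one stroke that the assignment $[g] \mapsto S_g$ behaves homomorphically on $\mathcal C(D, \Gamma_0(N))$ (which is morally the content of genus theory here), at the cost of leaning on the full ideal-theoretic apparatus. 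A minor bonus on your side: $f_0(1, 0) = 1$ gives $[1] \in H$ directly, more cheaply than the paper's appeal to the preceding lemma with $n = D$.
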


\begin{proof}
We first show that $\{ [m] \in (\mathbb Z / D\mathbb Z)^\times \, | \, \mbox{$m$ is $N$-represented by $f(x, y)$} \}$ is contained in $\mathrm{ker}(\chi)$. Let $m \in \mathbb N$ be $N$-represented by the form $f(x, y)$. Then we can write $m = d^2 m'$, where $m'$ is properly $N$-represented by $f(x, y)$ and $\chi([m]) = \chi([d])^2 \chi ([m']) = \chi ([m'])$. Thus we may assume that $m$ is properly $N$-represented by $f(x, y)$. By the first lemma on a previous page, we deduce that $b^2 - 4mc = D$ for some $b, c \in \mathbb Z$. The properties of the Kronecker symbol imply that
\[ \chi([m]) \ = \ (\frac{D}{m}) \ = \ (\frac{b^2}{m}) \ = \ 1 \]
and hence our claim is proved.

Put $d = (x, y)$. Then $m' := f(x/d, y/d)$ is properly represented by $f$ and $m' = \frac{m}{d^2}$ is relatively prime to $nN$.

(1) By applying the preceding lemma with $n = D$ and $a = 1$, we see that $H \subset \mathrm{ker}(\chi)$ is nonempty. The identities
\begin{eqnarray*}
(x^2 + ay^2)(z^2 + aw^2) & = & (xz - ayw)^2 + a(xw + yz)^2, \\
(x^2 + xy + ay^2)(z^2 + zw + aw^2) & = & (xz - ayw)^2+ (xz - ayw)(xw + yz + yw) + a(xw + yz + yw)^2
\end{eqnarray*}
show that $H$ is closed under multiplication, and hence $H$ is a subgroup of $\mathrm{ker}(\chi)$.

(2) If we apply the preceding lemma again with $n = D$, we may assume that $f(1, 0) = a$ is relatively prime to $ND$. Suppose that $D \equiv 0 \ (\bmod \ 4)$. Then $b$ is even and can be written as $2b'$, and then we have
\[ a f(x, y) \ = \ (ax + b'y)^2 - \frac{D}{4} y^2. \]
Note that $(ax + b'y, N) = 1$ if $(x, N) = 1$ and $y \equiv 0 \ (\bmod \ N)$. Since $(a, D) = 1$, it follows that the values in $(\mathbb Z / D\mathbb Z)^\times$ which are $N$-represented by $f(x, y)$ lie in the coset $[a]^{-1} H$.

Conversely, if $[c] \in [a]^{-1}H$, then $ac \equiv z^2 - \frac{D}{4}w^2 \ (\bmod \ D)$ for some $z, w \in \mathbb Z$ with $(z, N) = 1$, $w \equiv 0 \ (\bmod \ N)$. Using the above identity, it is easy to solve $f(x, y) \equiv c \ (\bmod \ D)$ with $(x, N) = 1$ and $y \equiv 0 \ (\bmod \ N)$. Thus the coset $[a]^{-1}H$ precisely consists of the values $N$-represented in $(\mathbb Z / D \mathbb Z)^\times$ by $f(x, y)$.

The case that $D \equiv 1 \ (\bmod \ 4)$ is similary by making use of the identity
\[ af(x, y) \ = \ (ax + \frac{b-1}{2}y)^2 + (ax + \frac{b - 1}{2}y)y + \frac{1 - D}{4}y^2. \]
\end{proof}

\begin{definition}
With the notations as above, we define the $N$-genus of a coset of $H$ in $\mathrm{ker}(\chi)$ to be the set consisting of all primitive forms $ax^2 + bxy + cy^2$ with $(a, N) = 1$ of discriminant $D$ which $N$-represent the values of the given coset modulo $D$.
\end{definition}

The preceding lemma immediately implies the following refinement of a previous theorem.

\begin{theorem}
Let $D \equiv 0, 1 \ (\bmod \ 4)$ be negative and let $H \subset \mathrm{ker}(\chi)$ be as in the preceding lemma. If $[a]H$ is a coset of $H$ in $\mathrm{ker}(\chi)$ and $p$ is an odd prime not dividing $D$, then $[p]$ is contained in $[a]H$ if and only if $p$ is $N$-represented by a $\Gamma_0(N)$-reduced form of discriminant $D$ in the genus of $[a]H$.
\end{theorem}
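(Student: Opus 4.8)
The plan is to combine the chain of equivalences underlying the previous theorem with the coset description furnished by the preceding lemma, thereby refining the statement ``$[p]\in\ker(\chi)$'' to the sharper ``$[p]$ lies in the specific coset $[a]H$.'' Throughout I would lean on two elementary observations. First, since $p$ is prime, every $N$-representation $p=Q(x_0,y_0)$ is automatically proper, because $\gcd(x_0,y_0)^2\mid p$ forces $\gcd(x_0,y_0)=1$; hence the earlier lemma characterizing proper $N$-representation is applicable. Second, if a form $Q=a'x^2+b'xy+c'y^2$ $N$-represents an integer coprime to $N$, then from $p\equiv a'x_0^2\pmod N$ with $(x_0,N)=1$ one reads off $(a',N)=1$, so that $Q$ is eligible to belong to a genus. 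I would record the (implicit) hypothesis $(p,N)=1$ at the outset, which is what makes the second observation bite.

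For the forward direction I would start from $[p]\in[a]H\subset\ker(\chi)$ and invoke the previous theorem to obtain a $\Gamma_0(N)$-reduced form $Q$ of discriminant $D$ that $N$-represents $p$. By the second observation the leading coefficient of $Q$ is prime to $N$, so part (2) of the preceding lemma applies to $Q$ and shows that the residues in $(\mathbb Z/D\mathbb Z)^\times$ which $Q$ $N$-represents constitute a single coset of $H$. This coset contains $[p]$; since $[p]\in[a]H$ and distinct cosets are disjoint, it must equal $[a]H$. By the definition of the $N$-genus this is exactly the assertion that $Q$ lies in the genus of $[a]H$, and so $p$ is $N$-represented by a $\Gamma_0(N)$-reduced form in the genus of $[a]H$.

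The converse I would argue directly. If $p$ is $N$-represented by a $\Gamma_0(N)$-reduced form $Q$ in the genus of $[a]H$, then by the definition of the genus $Q$ has $(Q(1,0),N)=1$ and $N$-represents precisely the residues of $[a]H$ modulo $D$. Applying part (2) of the preceding lemma to $Q$ identifies the coset of $N$-represented residues as $[a]H$; since $p$ is itself one of the values $N$-represented by $Q$, its class $[p]$ lies in $[a]H$.

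The one delicate point, and the step I expect to require the most care, is the coprimality bookkeeping that guarantees the reduced form produced in the forward direction genuinely belongs to the genus, namely that its leading coefficient is prime to $N$ and that its $N$-represented residues form a full coset rather than a more complicated subset. Both are secured by the two preliminary observations together with the primality of $p$ (properness of the representation and the fact that the represented value is a unit modulo $N$). Once this is in place, the equivalence is a formal consequence of part (2) of the preceding lemma and the previous theorem, in accordance with the author's remark that the refinement follows immediately.
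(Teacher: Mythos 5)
Your proof is correct and is essentially the paper's own argument: the paper offers no separate proof, remarking only that the preceding lemma immediately implies this refinement of the previous theorem, and your two directions spell out exactly that implication via part (2) of the lemma together with the disjointness of cosets. Your flagged hypothesis $(p, N) = 1$ is indeed implicitly required for the forward direction --- it is what forces the leading coefficient of the reduced form $N$-representing $p$ to be prime to $N$, hence eligible for a genus (e.g.\ for $D = -4$, $N = p = 5$ one has $[5] \in H$ yet $5$ is $5$-represented only by forms with leading coefficient divisible by $5$) --- a point the paper glosses over.
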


The genus containing the principal form is called the principal genus.

\begin{corollary}
(1) Let $m, N \in \mathbb N$ and $p$ an odd prime not dividing $m$. Then $p$ is $N$-represented by a form of discriminant $-4m$ in the principal genus if and only if for some integer $x$ with $(x, N) = 1$,
\[
\left\{
\begin{array}{ll}
p \equiv x^2 \mbox{ or } x^2 + m \ (\bmod \ {4m}) & \mbox{if $N$ is odd}, \\
p \equiv x^2 \ (\bmod \ {4m}) & \mbox{if $N$ is even}.
\end{array}
\right.
\]
(2) Let $m, N \in \mathbb N$ and $p$ an odd prime not dividing $1 - 4m$. Then $p$ is $N$-represented by a form of discriminant $1 - 4m$ in the principal genus if and only if for some integer $x$ with $(x, N) = 1$,
\[
p \equiv x^2 \ (\bmod \ {1 - 4m}).
\]
\end{corollary}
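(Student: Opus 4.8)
The plan is to deduce the corollary from the immediately preceding theorem together with an explicit computation of the subgroup $H$ attached to the principal form. First I would record that the principal genus is precisely the genus of the coset $[1]H = H$, since by definition $H$ is the set of residues $N$-represented by the principal form. Invoking the refinement theorem with $[a] = [1]$, and using part (2) of the preceding lemma (which identifies the residues $N$-represented by any form in a genus with a fixed coset of $H$), I would reduce the statement ``$p$ is $N$-represented by a form of discriminant $D$ in the principal genus'' to the congruence ``$[p] \in H$'': if $p = f(x_0, y_0)$ for a form $f$ in the principal genus with $(x_0, N) = 1$ and $N \mid y_0$, then $[p]$ lies in the coset of residues $N$-represented by $f$, which is $H$; conversely, $[p] \in H$ produces by the theorem a $\Gamma_0(N)$-reduced form in the principal genus that $N$-represents $p$. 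Since $p$ is an odd prime not dividing $D$, we have $[p] \in (\mathbb Z/D\mathbb Z)^\times$ and the hypotheses of the theorem are met. The whole corollary thus reduces to describing $H$ as an explicit set of residues modulo $D$.

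For part (1), where $D = -4m$, the principal form is $x^2 + m y^2$, and by the congruence characterization established in the proof of the lemma, $[c] \in H$ if and only if $c \equiv x^2 + m y^2 \pmod{4m}$ for some $x, y$ with $(x, N) = 1$ and $N \mid y$. The key observation is that $m y^2 \bmod 4m$ is governed by the parity of $y$: if $y$ is even then $4 \mid y^2$ and hence $m y^2 \equiv 0 \pmod{4m}$, whereas if $y$ is odd then $y^2 \equiv 1 \pmod 4$ and hence $m y^2 \equiv m \pmod{4m}$. Because $y$ must be a multiple of $N$, its parity is dictated by $N$: when $N$ is even every admissible $y$ is even, giving $H = \{[x^2] : (x, N) = 1\}$; when $N$ is odd one may take $y$ either even or odd, giving $H = \{[x^2],\, [x^2 + m] : (x, N) = 1\}$. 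Taking $c = p$ yields exactly the two congruence alternatives in the statement.

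For part (2), where $D = 1 - 4m$, the principal form is $x^2 + xy + m y^2$, and $[c] \in H$ if and only if $c \equiv x^2 + xy + m y^2 \pmod{D}$ for some $x, y$ with $(x, N) = 1$ and $N \mid y$. Completing the square gives $4(x^2 + xy + m y^2) = (2x + y)^2 - D y^2 \equiv (2x + y)^2 \pmod D$, so that every such value is a square modulo $D$; conversely, setting $y = 0$ shows that every class $[x^2]$ with $(x, N) = 1$ already lies in $H$. It remains only to reconcile these two descriptions, which amounts to the elementary fact that every residue class modulo $D$ that is coprime to $D$ contains an integer coprime to $N$ (a prime $\ell \mid N$ that also divides $D$ poses no obstruction, since a unit mod $D$ is automatically nonzero mod $\ell$, while for $\ell \nmid D$ one adjusts the representative by a multiple of $D$). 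Hence $H = \{[x^2] : (x, N) = 1\}$ with no dependence on the parity of $N$, and taking $c = p$ gives the stated congruence.

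I expect the routine but slightly delicate point to be the parity bookkeeping modulo $4m$ in part (1) --- in particular, the fact that the $N$-even versus $N$-odd dichotomy of the statement is exactly the dichotomy of whether an admissible $y$ can be taken odd --- and, in part (2), the representative-adjustment step ensuring that a square modulo $D$ can be realized as $x^2$ with $(x, N) = 1$. Neither presents a genuine difficulty, but both require care to keep the coprimality and congruence conditions consistent with the definition of $N$-representation.
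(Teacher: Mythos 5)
Your proposal is correct and follows essentially the same route as the paper: the paper's one-line proof records only the completing-the-square identity $4(x^2+xy+my^2) \equiv (2x+y)^2 \pmod{1-4m}$ for part (2), leaving implicit the reduction via the preceding theorem and lemma, the computation of $H$ modulo $4m$ in part (1), and the coprime-representative adjustment, all of which you supply correctly. In particular, your parity bookkeeping ($my^2 \equiv 0$ or $m \pmod{4m}$ according as $y$ is even or odd, with $N \mid y$ forcing $y$ even when $N$ is even) and the CRT step making $(x,N)=1$ attainable (using that $x$ is automatically a unit modulo $D$ at primes dividing $\gcd(N,D)$) are exactly the details the paper takes for granted.
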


\begin{proof}
Note that
\[
4(x^2 + xy + my^2) \equiv (2x + y)^2 \ (\bmod \ {1 - 4m}) \ \Rightarrow \ x^2 + xy + my^2 \equiv (x')^2 \ (\bmod \ {1 - 4m}),
\]
where $x' := x + 2^{-1}y \in (\mathbb Z / (1 - 4m)\mathbb Z)^\times$. 
\end{proof}

\begin{example}
Consider the case that $D = -28$ and $N = 2$. There are exactly two $\Gamma_0(2)$-reduced forms $x^2 + 7y^2$, $7x^2 + y^2$ in the $2$-genera for the discriminant $-28$. The values in $(\mathbb Z/ 28\mathbb Z)^\times$ which are $2$-represented by $x^2 + 7y^2$ are $1, 9, 25$. The values $2$-represented by $7x^2 + y^2$ in $(\mathbb Z/ 28\mathbb Z)^\times$ are $11$, $23$, $15$. Note that $\mathrm{ker}(\chi) = \{1, 9, 11, 15, 23, 25 \}$. Therefore we deduce from the preceding theorem that for any prime $p$ with $(p, 14) = 1$,
\begin{eqnarray*}
p \equiv 1, 9, 25 \ (\bmod \ {28}) & \Leftrightarrow & \mbox{$p$ is $2$-represented by $x^2 + 7y^2$}, \\
p \equiv 11, 15, 23 \ (\bmod \ {28}) & \Leftrightarrow & \mbox{$p$ is $2$-represented by $7x^2 + y^2$}.
\end{eqnarray*}
Note that $(7x^2 + y^2)(7z^2 + w^2) = (7xz - yw)^2 + 7(xw + yz)^2$ and $(7xz - yw, 2) = 1$, $xw + yz \equiv 0 \ (\bmod \ 2)$ if $(xz, 2) = 1$, $y \equiv w \equiv 0 \ (\bmod \ 2)$. From this, we further infer that for primes $p$, $q$ with $(pq, 14) = 1$,
\[ p, q \equiv 11, 15, 23 \ (\bmod \ {28}) \ \Rightarrow \ pq = x^2 + 7y^2, \ (x, 2) = 1, \ y \equiv 0 \ (\bmod \ 2). \]
In fact, there is a version of the above identity that holds for any form of discriminant $-4m$
\[
(ax^2 + 2bxy + cy^2)(az^2 + 2bzw + cw^2) \ = \ (axz + bxw + byz + cyw)^2 + m(xw - yz)^2,
\]
where $(axz + bxw + byz + cyw, N) = 1$ and $xw - yz \equiv 0 \ (\bmod \ N)$ if $(a, N) = 1$, $(xz, N) = 1$, and $y \equiv w \equiv 0 \ (\bmod \  N)$.
\end{example}

\begin{theorem}
A positive integer $m$ is $N$-represented by a primitive form $f(x, y)$ of discriminant $D < 0$ if and only if there exists a proper $\mathcal O$-ideal $\mathfrak a \in \Phi([f(x, y)])$ such that $m = \mathrm{N}(\mathfrak a)$.
\end{theorem}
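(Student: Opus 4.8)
The plan is to re-encode the statement in terms of the ideal attached to $f$ and the norm form of a binary quadratic form. Write $f(x,y)=ax^2+bxy+cy^2$ with $(a,N)=1$, set $\omega:=a\tau_Q=\frac{-b+\sqrt D}{2}\in\mathcal O$, and let $\mathfrak a_f:=\mathbb Z a+\mathbb Z\omega$. As in the proof of Proposition \ref{Proposition - bijection}, $\mathfrak a_f$ is a proper $\mathcal O$-ideal of norm $a$; since $a\mathcal O\in\mathcal P_0(\mathcal O,N)$, this gives $\Phi([f])=[\mathbb Z+\mathbb Z\tau_Q]=[\mathfrak a_f]$ in $\mathcal C_0(\mathcal O,N)$. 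The computational backbone is the norm identity $\mathrm N(ua+v\overline\omega)=a\,f(u,-v)$ for $u,v\in\mathbb Z$, obtained by expanding $(ua+v\overline\omega)(ua+v\omega)$ and using $\omega+\overline\omega=-b$ and $\omega\overline\omega=ac$. Here the conjugate ideal $\overline{\mathfrak a_f}=\mathbb Z a+\mathbb Z\overline\omega$ carries this norm form, and $\mathfrak a_f\,\overline{\mathfrak a_f}=a\mathcal O$ forces $[\overline{\mathfrak a_f}]=[\mathfrak a_f]^{-1}$; using the conjugate (rather than $\mathfrak a_f$ itself) is exactly what makes the constructed class land in $\Phi([f])$ and not in its inverse.

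For the forward implication, suppose $m=f(x_0,y_0)$ with $(x_0,N)=1$ and $N\mid y_0$. Put $\alpha:=x_0 a-y_0\overline\omega\in\overline{\mathfrak a_f}$ and $\mathfrak a:=\alpha\,\overline{\mathfrak a_f}^{-1}$. Then $\mathfrak a$ is an integral proper $\mathcal O$-ideal because $\alpha\in\overline{\mathfrak a_f}$, and $\mathrm N(\mathfrak a)=\mathrm N(\alpha)/a=f(x_0,y_0)=m$ by the norm identity. Since $N\mid y_0$ and $\overline\omega\in\mathcal O$, we get $\alpha\equiv x_0 a\pmod{N\mathcal O}$ with $(x_0 a,N)=1$, so $\alpha\mathcal O\in\mathcal P_0(\mathcal O,N)$ and hence $[\mathfrak a]=[\overline{\mathfrak a_f}]^{-1}=[\mathfrak a_f]=\Phi([f])$. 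Finally $m\equiv a x_0^2\pmod N$ yields $(m,N)=1$, so $\mathfrak a\in\mathcal I(\mathcal O,N)$ and $\mathfrak a\in\Phi([f])$ with $\mathrm N(\mathfrak a)=m$, as desired.

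Conversely, let $\mathfrak a\in\Phi([f])=[\mathfrak a_f]$ be an integral proper $\mathcal O$-ideal with $\mathrm N(\mathfrak a)=m$. Form the principal ideal $\mu\mathcal O:=\mathfrak a\,\overline{\mathfrak a_f}$; since $\mathfrak a$ is integral, $\mu\in\overline{\mathfrak a_f}\subseteq\mathcal O$, and $\mathrm N(\mu)=\mathrm N(\mathfrak a)\,\mathrm N(\overline{\mathfrak a_f})=ma$. Writing $\mathfrak a=\lambda\mathfrak a_f$ with $\lambda\mathcal O\in\mathcal P_0(\mathcal O,N)$ gives $\mu\mathcal O=\lambda a\mathcal O\in\mathcal P_0(\mathcal O,N)$; unwinding the definition of $\mathcal P_0(\mathcal O,N)$ (factor $\lambda=\beta\delta^{-1}$ with $\beta,\delta$ congruent to integers coprime to $N$ modulo $N\mathcal O$) then yields $\mu\equiv g\pmod{N\mathcal O}$ for some $g\in\mathbb Z$ with $(g,N)=1$. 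Now expand $\mu=ua+v\overline\omega$ in the $\mathbb Z$-basis $\{1,\theta\}$ of $\mathcal O$, where $\theta=\frac{D+\sqrt D}{2}$ and $\overline\omega=\frac{D-b}{2}-\theta$; comparing coefficients in $\mu-g\in N\mathcal O=N\mathbb Z+N\mathbb Z\theta$ forces $N\mid v$ and then $(u,N)=1$. Since $\mathrm N(\mu)=a\,f(u,-v)=ma$, we obtain $f(u,-v)=m$ with $(u,N)=1$ and $N\mid(-v)$, exhibiting $m$ as $N$-represented by $f$.

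The main obstacle lies in the converse: one must pass from the \emph{abstract} class identity $[\mathfrak a]=\Phi([f])$ in $\mathcal C_0(\mathcal O,N)$ to an explicit generator $\mu$ of a principal ideal whose residue modulo $N\mathcal O$ is a rational integer coprime to $N$, and then check that this single congruence is equivalent to the representing vector satisfying both $(u,N)=1$ and $N\mid v$. This is precisely where quotienting by $\mathcal P_0(\mathcal O,N)$ rather than by $\mathcal P(\mathcal O,N)$ is indispensable: it is the $\mathcal P_0$-relation that records the congruence data $x\not\equiv 0$, $y\equiv 0$ modulo $N$ built into the notion of $N$-representation. Keeping the conjugate ideal $\overline{\mathfrak a_f}$ throughout — so that the class produced is $\Phi([f])$ and not $\Phi([f])^{-1}$ — is the other point demanding steady bookkeeping.
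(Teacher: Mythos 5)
Your proposal is correct, but it travels a genuinely different route from the paper's. The paper's forward direction first extracts $d=\gcd(x_0,y_0)$, invokes the earlier lemma that a properly $N$-represented integer can be made the leading coefficient of a $\Gamma_0(N)$-equivalent form, and then simply exhibits the ideal $\mathbb Z ad+\mathbb Z ad\tau$ of norm $ad^2=m$; its converse re-runs the surjectivity construction of Proposition \ref{Proposition - bijection} (writing $\mathfrak a=t(\mathbb Z+\mathbb Z\tau_0)$ with $t$ the least positive integer in $\mathfrak a$, and noting $a_0\mid t$, so $m=\mathrm N(\mathfrak a)=t^2/a_0=a_0s^2=Q_0(s,0)$), producing an $N$-representation of the special shape $(s,0)$ by a form in the class of $f$, which transfers to $f$ by the $\Gamma_0(N)$-invariance remark. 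You instead keep $f$ fixed throughout and work with $\mathfrak a_f=\mathbb Z a+\mathbb Z\omega$, its conjugate, and the norm identity $\mathrm N(ua+v\overline\omega)=a\,f(u,-v)$: a representing vector is converted into the ideal $\alpha\,\overline{\mathfrak a_f}^{\,-1}$, and conversely the class identity is converted into a generator $\mu$ of $\mathfrak a\,\overline{\mathfrak a_f}$ whose congruence modulo $N\mathcal O$ is decoded in the basis $\{1,\theta\}$ to recover the vector $(u,-v)$ for $f$ itself. What your route buys is self-containedness and transparency — neither the leading-coefficient lemma nor the surjectivity construction is needed, and the exact role of $\mathcal P_0(\mathcal O,N)$ and of the conjugate ideal is laid bare — while the paper's route is shorter because it recycles those lemmas, at the cost of only ever exhibiting representations $Q_0(s,0)$ of an equivalent form. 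One small point to patch in your converse: from $\mu\mathcal O\in\mathcal P_0(\mathcal O,N)$ you conclude $\mu\equiv g\pmod{N\mathcal O}$ with $(g,N)=1$, but a generator is determined only up to a unit, and for $D=-3,-4$ a unit need not be congruent to a rational integer modulo $N\mathcal O$ (e.g.\ $i$ modulo $2\mathbb Z[i]$); the fix is simply to take the generator $\beta\delta^{-1}$ itself, which is harmless since the ideal, its norm, and membership in $\overline{\mathfrak a_f}$ are unit-invariant — this is the same implicit choice the paper makes in the injectivity step of Proposition \ref{Proposition - bijection}, so your argument is at the paper's own level of rigor once that sentence is worded carefully.
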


\begin{proof}
($\Rightarrow$) Let $m = f(x_0, y_0)$ for some $x_0, y_0 \in \mathbb Z$ with $(x_0, N) = 1$, $y_0 \equiv 0 \ (\bmod \ N)$. Put $d = (x_0, y_0)$. Then $a := m/d^2$ is properly $N$-represented by $f(x, y)$. Without loss of generality we may assume that $f(x, y) = ax^2 + bxy + cy^2$ for some $b, c \in \mathbb Z$ by a previous lemma. Since $\Phi([f(x, y)]) = [\mathbb Za + \mathbb Za\tau]$ and $\mathrm{N}(\mathbb Za + \mathbb Z a \tau) = a$ with $\tau = \frac{-b + \sqrt{D}}{2a}$, we see that $\mathrm{N}(\mathbb Zad + \mathbb Zad\tau) = ad^2 = m$ and $\mathbb Zad + \mathbb Zad\tau \in \Phi([f(x, y)])$.

($\Leftarrow$) Let $f(x, y) = ax^2 + bxy + cy^2$ be a primitive form of discriminant $D$ with $(a, N) = 1$ and let $\tau \in \mathbb H$ satisfy $f(\tau, 1) = 0$. From the proof of the surjectivity of $\Phi$, we see that $\mathfrak a = t(\mathbb Z + \mathbb Z \tau)$ for some $t \in \mathbb Z$ with $(t, N) = 1$. Then we have $\Phi([f(x, y)]) = [\mathbb Za + \mathbb Za\tau] = [\mathfrak a]$. We claim that $m$ is $N$-represented by $f(x, y)$. Because $t\tau \in \mathfrak a \subset \mathcal O = \mathbb Z + \mathbb Za\tau$, we put $t = as$ for some $s \in \mathbb Z$. Then we have $(s, N) = 1$ and
\[ m \ = \ \mathrm{N}(\mathfrak a) \ = \ \frac{t^2}{a} \ = \ as^2 \ = \ f(s, 0). \]
\end{proof}

\begin{corollary}
Let $\mathcal O$ be an order in an imaginary quadratic field and $N \in \mathbb N$. Given a positive integer $n$, every ideal class in $C_0(\mathcal O, N)$ contains a proper $\mathcal O$-ideal whose norm is relatively prime to $n$.
\end{corollary}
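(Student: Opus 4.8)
The plan is to reduce the statement to the two results immediately preceding it: the norm-representation theorem, which identifies the norms of proper $\mathcal O$-ideals in a fixed class as exactly the positive integers $N$-represented by the associated form, and the earlier lemma that produces, for a primitive form whose leading coefficient is prime to $N$, a properly $N$-represented value coprime to any prescribed modulus. Once these are in hand the corollary is essentially a matter of assembling the dictionary $\Phi$ with these two facts.

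First I would fix $\mathcal O$, let $D$ be its discriminant, and take an arbitrary class $\mathfrak c \in \mathcal C_0(\mathcal O, N)$. Since $\Phi : \mathcal C(D, \Gamma_0(N)) \to \mathcal C_0(\mathcal O, N)$ is a bijection by Proposition \ref{Proposition - bijection}, I can write $\mathfrak c = \Phi([f])$, where $f(x, y) = ax^2 + bxy + cy^2$ is a primitive form of discriminant $D$ with $(a, N) = 1$; the very definition of $\mathcal C(D, \Gamma_0(N))$ guarantees that such a representative exists.

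Next I would apply the earlier lemma that produces a suitable $N$-representable value, taking the prescribed modulus to be the given $n$ and the form to be this $f$. Because $(a, N) = 1$, that lemma yields an integer $m \in \mathbb N$ with $(m, nN) = 1$, in particular $(m, n) = 1$, such that $m$ is properly $N$-represented, and hence $N$-represented, by $f$. Finally, invoking the norm-representation theorem for $f$ and this $m$, I obtain a proper $\mathcal O$-ideal $\mathfrak a \in \Phi([f]) = \mathfrak c$ with $\mathrm{N}(\mathfrak a) = m$; since $(m, n) = 1$, this $\mathfrak a$ is the ideal demanded by the corollary.

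I expect essentially no serious obstacle here, since the heavy lifting is already carried out in the cited results. The only points requiring a moment's care are the choice of a representative form with $(a, N) = 1$, which is what permits the application of the $N$-representation lemma, and the observation that the lemma's conclusion $(m, nN) = 1$ holds with no coprimality hypothesis relating $n$ and $N$. It is precisely this last feature that lets us drop the assumption $(M, N) = 1$ present in Lemma \ref{Lemma - relatively prime for ideals} and obtain the statement for an arbitrary positive integer $n$.
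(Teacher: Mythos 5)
Your proposal is correct and is exactly the argument the paper intends: its one-line proof (``it follows directly from the preceding theorem and a previous lemma'') is precisely your assembly of the surjectivity of $\Phi$, the lemma producing a properly $N$-represented value $m$ with $(m, nN) = 1$, and the norm-representation theorem. Your closing remark is also on point --- the absence of any coprimality hypothesis between $n$ and $N$ in that lemma is what strengthens Lemma \ref{Lemma - relatively prime for ideals} to arbitrary $n$.
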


\begin{proof}
It follows directly from the preceding theorem and a previous lemma.
\end{proof}

\section{The fundamental region for $\Gamma_0(p)$}

Throughout this section, let $p \geq 5$ denote a prime number. The purpose of this section is to construct the fundamental region $\mathfrak F_{\Gamma_0(p)}$ for $\Gamma_0(p)$ which satisfies the following properties:

\begin{enumerate}
\item $|\mathrm{Re}(\tau)| \leq 1/2$ for all $\tau \in \mathfrak F_{\Gamma_0(p)}$

\item the imaginary part of $\tau \in \mathfrak F_{\Gamma_0(p)}$ is maximal in the orbit $\Gamma_0(p)\tau$

\item the real part of $\tau \in \mathfrak F_{\Gamma_0(p)}$ is minimal among the points $\tau' \in \Gamma_0(p)\tau$ with $|\mathrm{Re}(\tau')| \leq 1/2$

\item for any $\tau \in \mathbb H$, there exists a unique point $\tau' \in \mathfrak F_{\Gamma_0(p)}$ such that $\tau' \in \Gamma_0(p) \tau$.
\end{enumerate}

Given a congruence subgroup, we can explicitly construct its open fundamental region consisting of points whose imaginary part is maximal in its orbit. This can be done by applying the method given in Ferenbaugh's paper \cite[Section 3]{Fer}. For a matrix $\gamma = (\begin{smallmatrix} a & b \\ c & d \end{smallmatrix})$, we write $a_\gamma := a$, $b_\gamma := b$, and so on. As in his paper, let
\begin{eqnarray*}
\mathrm{arc}(\gamma) & = & \{ \tau \in \mathbb H \, \big| \, |\tau - a_\gamma/c_\gamma| = 1/|c_\gamma| \}, \\
\mathrm{inside}(\gamma) & = & \{ \tau \in \mathbb H \, \big| \, |\tau - a_\gamma/c_\gamma| < 1/|c_\gamma| \}, \\
\mathrm{outside}(\gamma) & = & \{ \tau \in \mathbb H \, \big| \, |\tau - a_\gamma/c_\gamma| > 1/|c_\gamma| \}
\end{eqnarray*}
for any $\gamma \in \mathrm{SL}_2(\mathbb Z)$ with $c_\gamma \neq 0$. If $\gamma = \pm(\begin{smallmatrix} 1 & b \\ 0 & 1 \end{smallmatrix})$ with $b > 0$, then we define
\begin{eqnarray*}
\mathrm{arc}(\gamma) & = & \{ \tau \in \mathbb H \, \big| \, \mathrm{Re}(\tau) = b/2 \}, \\
\mathrm{inside}(\gamma) & = & \{ \tau \in \mathbb H \, \big| \, \mathrm{Re}(\tau) > b/2 \}, \\
\mathrm{outside}(\gamma) & = & \{ \tau \in \mathbb H \, \big| \, \mathrm{Re}(\tau) < b/2 \}.
\end{eqnarray*}
On the other hand, if $\gamma = \pm(\begin{smallmatrix} 1 & b \\ 0 & 1 \end{smallmatrix})$ with $b < 0$, then we define $\mathrm{arc}(\gamma)$ in the same way and reverse the definitions of $\mathrm{inside}(\gamma)$ and $\mathrm{outside}(\gamma)$.

An open fundamental region for a congruence subgroup $\Gamma$ of $\mathrm{SL}_2(\mathbb Z)$ is defined to be an open subset $R$ of $\mathbb H$ with the properties:
\begin{enumerate}
\item there do not exist $\gamma \in \Gamma$ and $\tau, \tau' \in R$ such that $\tau \neq \tau'$ and $\tau = \gamma(\tau')$

\item for any $\tau \in \mathbb H$, there exists $\gamma \in \Gamma$ such that $\gamma(\tau) \in \bar{R}$.
\end{enumerate}

\begin{theorem}
Let $\Gamma$ be a congruence subgroup containing $(\begin{smallmatrix} 1 & 1 \\ 0 & 1 \end{smallmatrix})$. Then
\[ R_\Gamma \ := \ \bigcap_{\gamma \in \Gamma - \{ \pm I \}} \mathrm{outside}(\gamma) \]
is an open fundamental region for $\Gamma$ such that

(1) $|\mathrm{Re}(\tau)| \leq 1/2$ for all $\tau \in R_\Gamma$

(2) the imaginary part of $\tau \in \overline{R_\Gamma}$ is maximal in the orbit $\Gamma \tau$.
\end{theorem}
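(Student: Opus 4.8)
The plan is to translate every membership condition ``$\tau \in \mathrm{outside}(\gamma)$'' into a statement about imaginary parts, and then read off both the fundamental-region axioms and properties (1)--(2) from this dictionary. First I would record the elementary computation that for $\gamma = \left(\begin{smallmatrix} a & b \\ c & d \end{smallmatrix}\right)$ with $c \neq 0$ one has $\mathrm{Im}(\gamma^{-1}(\tau)) = \mathrm{Im}(\tau)/|c\tau - a|^2 = \mathrm{Im}(\tau)/\left(|c|^2|\tau - a/c|^2\right)$, so that
\[ \tau \in \mathrm{outside}(\gamma) \iff |\tau - a_\gamma/c_\gamma| > 1/|c_\gamma| \iff \mathrm{Im}(\gamma^{-1}(\tau)) < \mathrm{Im}(\tau), \]
with equality exactly on $\mathrm{arc}(\gamma)$. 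For the parabolic generators $\pm T^b$ (where $T = \left(\begin{smallmatrix} 1 & 1 \\ 0 & 1 \end{smallmatrix}\right) \in \Gamma$) the imaginary part is preserved and $\mathrm{outside}(T^b)$ instead constrains the real part. Consequently $\tau \in R_\Gamma$ if and only if $\mathrm{Im}(\delta(\tau)) < \mathrm{Im}(\tau)$ for every $\delta \in \Gamma$ with $c_\delta \neq 0$ together with $|\mathrm{Re}(\tau)| < 1/2$; taking closures, $\tau \in \bigcap_\gamma \overline{\mathrm{outside}(\gamma)}$ if and only if $\mathrm{Im}(\delta(\tau)) \leq \mathrm{Im}(\tau)$ for all $\delta \in \Gamma$, i.e.\ the imaginary part of $\tau$ is maximal in its orbit.

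The geometric point I would emphasise is that each $\mathrm{outside}(\gamma)$ is an \emph{open hyperbolic half-plane}: its boundary $\mathrm{arc}(\gamma)$ is a geodesic (a semicircle centred on $\mathbb R$, or a vertical line), so $R_\Gamma$ is an intersection of geodesically convex sets and is itself convex. Two consequences follow. \emph{Local finiteness}: fixing $\tau_0$ at height $y_0$, only those $\gamma$ with $|c_\gamma| \leq 2/y_0$ can have $\mathrm{arc}(\gamma)$ meet a small neighbourhood of $\tau_0$ (a semicircle of radius $1/|c_\gamma|$ reaches height at most $1/|c_\gamma|$), and among these the centre $a_\gamma/c_\gamma$ is confined to a bounded interval, leaving finitely many distinct arcs near $\tau_0$; the same holds trivially for the vertical lines. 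Hence $R_\Gamma$ is locally a finite intersection of open half-planes, so it is \emph{open}, and it is nonempty since $iy \in R_\Gamma$ for every $y > 1$ (indeed $|iy - a/c|^2 = (a/c)^2 + y^2 > 1 \geq 1/c^2$).

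Next I would verify the two fundamental-region axioms. For the first, suppose $\tau, \tau' \in R_\Gamma$ with $\tau' = \gamma(\tau)$ and $\gamma \neq \pm I$. If $\gamma = \pm T^b$ then $\tau' = \tau + b$, but $|\mathrm{Re}(\tau)|, |\mathrm{Re}(\tau')| < 1/2$ force $|b| < 1$, whence $b = 0$, a contradiction. If $c_\gamma \neq 0$, then $\tau \in \mathrm{outside}(\gamma^{-1})$ gives $\mathrm{Im}(\tau') = \mathrm{Im}(\gamma(\tau)) < \mathrm{Im}(\tau)$, while $\tau' \in \mathrm{outside}(\gamma)$ gives $\mathrm{Im}(\tau) = \mathrm{Im}(\gamma^{-1}(\tau')) < \mathrm{Im}(\tau')$, again a contradiction. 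So no two distinct points of $R_\Gamma$ are $\Gamma$-equivalent.

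For the second axiom, given $\tau_0 \in \mathbb H$ I would first choose a point $\tau$ of maximal imaginary part in the orbit $\Gamma \tau_0$; this exists because $\{ |c\tau_0 + d| : (c,d) \in \mathbb Z^2 \setminus \{0\} \}$ has a positive minimum, so the orbit's imaginary parts are bounded above and a maximum is attained. Applying a suitable power of $T$ I may assume $|\mathrm{Re}(\tau)| \leq 1/2$ without changing the maximal imaginary part, which by the dictionary places $\tau \in \bigcap_\gamma \overline{\mathrm{outside}(\gamma)}$. The crux, which I regard as the main obstacle, is to upgrade this to $\tau \in \overline{R_\Gamma}$, i.e.\ to prove $\bigcap_\gamma \overline{\mathrm{outside}(\gamma)} = \overline{R_\Gamma}$: the inclusion ``$\supseteq$'' is immediate, but ``$\subseteq$'' fails for general intersections of closed sets. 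Here convexity resolves it. Fixing any $p \in R_\Gamma$ (all inequalities strict) and any $\tau$ lying in every closed half-plane (all inequalities weak), the geodesic segment $[p,\tau)$ lies in each open half-plane $\mathrm{outside}(\gamma)$ by convexity, hence in $R_\Gamma$, so $\tau \in \overline{R_\Gamma}$. This establishes the second axiom. Finally, property (1) of the theorem is immediate since $R_\Gamma \subseteq \mathrm{outside}(T) \cap \mathrm{outside}(T^{-1})$ forces $|\mathrm{Re}(\tau)| < 1/2$, and property (2) is precisely the closure identity $\overline{R_\Gamma} = \bigcap_\gamma \overline{\mathrm{outside}(\gamma)}$ combined with the dictionary characterising the right-hand side as the set of points whose imaginary part is maximal in the orbit.
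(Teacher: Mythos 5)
Your proposal is correct, and it actually supplies more than the paper does: the paper's entire proof of this theorem is the single line ``See \cite[Theorem 3.3]{Fer} and its proof,'' deferring everything to Ferenbaugh. The dictionary $\tau \in \mathrm{outside}(\gamma) \Leftrightarrow \mathrm{Im}(\gamma^{-1}(\tau)) < \mathrm{Im}(\tau)$ (with equality on $\mathrm{arc}(\gamma)$) is the same mechanism underlying Ferenbaugh's construction --- it is already visible in part (2) of the statement --- so your overall route is the expected one; what you make explicit are the two genuinely delicate points that the citation hides. First, openness of $R_\Gamma$ is not automatic for an infinite intersection of open sets; your local-finiteness count (only $|c_\gamma| \leq 2/y_0$ can matter near a point at height $y_0$, and the centers $a_\gamma/c_\gamma$ are then confined to a bounded set, so only finitely many distinct arcs appear) is exactly the needed discreteness input. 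Second, and more substantially, axiom (2) requires the inclusion $\bigcap_\gamma \overline{\mathrm{outside}(\gamma)} \subseteq \overline{R_\Gamma}$, which fails for general intersections of closed sets; your resolution via hyperbolic convexity is sound, since each $\mathrm{arc}(\gamma)$ is a geodesic (semicircle centered on $\mathbb{R}$ or vertical line), and a geodesic segment from a point $p \in R_\Gamma$ (your $p = iy$, $y > 1$, guarantees nonemptiness) to a point $\tau$ of the closed intersection can meet a given boundary geodesic at most once unless it lies inside it, forcing $[p, \tau) \subseteq R_\Gamma$. One small imprecision to touch up: the closed intersection is characterized by ``$\mathrm{Im}$ maximal in the orbit \emph{and} $|\mathrm{Re}(\tau)| \leq 1/2$,'' since the translation elements $\pm T^b$ constrain the real part rather than the height; your final sentence glosses over the real-part condition, but this is harmless because property (2) only needs the containment of $\overline{R_\Gamma}$ in the maximal-height set. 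In sum, each approach buys what one would expect: the paper's citation is economical, while your argument is self-contained and makes explicit where discreteness (for openness and for the existence of a maximal-height orbit representative) and geodesic convexity (for the closure identity) enter.
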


\begin{proof}
See \cite[Theorem 3.3]{Fer} and its proof.
\end{proof}

Thus the fundamental region $\mathfrak F_{\Gamma_0(p)}$ that we try to find lies between $R_{\Gamma_0(p)}$ and $\overline{R_{\Gamma_0(p)}}$.

\begin{lemma}
\[ R_{\Gamma_0(p)} \ = \ \{ \tau \in \mathbb H \, \big| \, |\mathrm{Re}(\tau)| < 1/2, \ 
|\tau - k/p| > 1/p \mbox{ for all } k = \pm 1, \pm 2, \ldots, \pm [p/2] \}. \]
\end{lemma}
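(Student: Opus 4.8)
The starting point is the identity $R_{\Gamma_0(p)} = \bigcap_{\gamma \in \Gamma_0(p) - \{\pm I\}} \mathrm{outside}(\gamma)$ supplied by the preceding theorem, and the plan is to organize the elements $\gamma = (\begin{smallmatrix} a & b \\ c & d \end{smallmatrix})$ of $\Gamma_0(p)$ according to their lower-left entry $c$, which is divisible by $p$. The inclusion of $R_{\Gamma_0(p)}$ in the right-hand set is the easy half. The two translations $\pm(\begin{smallmatrix} 1 & \pm 1 \\ 0 & 1 \end{smallmatrix})$ contribute the half-planes whose intersection is $|\mathrm{Re}(\tau)| < 1/2$; and for each $k$ with $1 \le |k| \le [p/2]$ one exhibits $\gamma \in \Gamma_0(p)$ with $c = p$ and $a = k$ (choose $d$ with $kd \equiv 1 \pmod p$ and set $b = (kd-1)/p$), whose $\mathrm{outside}(\gamma)$ is exactly $\{|\tau - k/p| > 1/p\}$. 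Since $R_{\Gamma_0(p)}$ is contained in every $\mathrm{outside}(\gamma)$, it satisfies all the listed inequalities.

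The substance lies in the reverse inclusion: given $\tau$ in the right-hand set I must verify $\tau \in \mathrm{outside}(\gamma)$ for every $\gamma \neq \pm I$. Translations with $|b| \ge 2$ are harmless because $|\mathrm{Re}(\tau)| < 1/2 < |b|/2$, so it remains to treat $\gamma$ with $c = pc' \neq 0$, and after replacing $\gamma$ by $-\gamma$ I may assume $c' \ge 1$. Then $\overline{\mathrm{inside}(\gamma)}$ is the closed disk $\Delta_\gamma$ of radius $1/(pc')$ about the real point $a/(pc')$, with $\gcd(a,pc') = 1$. If $c' = 1$ this disk is already $\{|\tau - a/p| \le 1/p\}$ with $p \nmid a$. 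If $c' \ge 2$, I would first enclose $\Delta_\gamma$ in a disk $H_k := \{|\tau - k/p| \le 1/p\}$: this containment holds precisely when $|a - kc'| \le c' - 1$, and since the interval $[a - (c'-1),\, a + (c'-1)]$ has length $2(c'-1) \ge c'$ it contains a multiple $kc'$; a short sign argument, using that $a \neq 0$, shows one may always take $k \neq 0$. In every case the problem is reduced to the covering claim that for each integer $k \neq 0$,
\[ H_k \cap S \ \subseteq \ \bigcup_{1 \le |k'| \le [p/2]} H_{k'}, \qquad S := \{\tau \in \mathbb H \mid |\mathrm{Re}(\tau)| < 1/2\}. \]
Granting this, $\tau$ lies in $S$ but in none of the $H_{k'}$, so $\tau \notin H_k \supseteq \Delta_\gamma = \overline{\mathrm{inside}(\gamma)}$, which is exactly $\tau \in \mathrm{outside}(\gamma)$.

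The covering claim is proved by splitting on the position of $k$, and this is where the strict bound $|\mathrm{Re}(\tau)| < 1/2$ interacts with the circles. If $1 \le |k| \le [p/2]$ then $H_k$ is itself one of the disks on the right. If $p \mid k$ (with $k \neq 0$) or $|k| \ge [p/2] + 2$, then every point of $H_k$ has real part of absolute value at least $(|k|-1)/p > 1/2$, so $H_k \cap S = \emptyset$. The remaining case $|k| = [p/2] + 1$ is the crux, since here $H_k$ genuinely protrudes past $\mathrm{Re}(\tau) = 1/2$ into the strip. The decisive observation is that, because $p$ is odd, the midpoint of $[p/2]/p$ and $([p/2]+1)/p$ equals $\frac{2[p/2]+1}{2p} = \frac12$, so the line $\mathrm{Re}(\tau) = 1/2$ is the perpendicular bisector of these two centers; hence any $\tau \in H_{[p/2]+1}$ with $\mathrm{Re}(\tau) < 1/2$ is at least as close to $[p/2]/p$ as to $([p/2]+1)/p$ and therefore lies in $H_{[p/2]}$, with the symmetric statement covering $k = -([p/2]+1)$ by $H_{-[p/2]}$.

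I expect this perpendicular-bisector comparison to be the main obstacle: it is precisely the place where the boundary disks that overrun $\mathrm{Re}(\tau) = \pm 1/2$ must be absorbed by the neighboring disks $H_{\pm[p/2]}$, and it is the reason the lemma uses strict inequalities. Once this geometric step is secured, assembling the three cases of the covering claim, feeding them through the reduction of $\Delta_\gamma$ to some $H_k$, and adjoining the translation argument for $|\mathrm{Re}(\tau)| < 1/2$ completes the reverse inclusion, and hence the equality.
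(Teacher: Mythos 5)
Your proof is correct, and although it starts from the same identity $R_{\Gamma_0(p)} = \bigcap_{\gamma \in \Gamma_0(p) - \{\pm I\}} \mathrm{outside}(\gamma)$ and handles the families $c_\gamma = 0$ and $|c_\gamma| = p$ exactly as the paper does, your treatment of the remaining matrices is genuinely different. The paper dispatches $|c_\gamma| \geq 2p$ by an imaginary-part estimate: any point of the candidate region with $|\mathrm{Re}(\tau)| > 1/2p$ has $\mathrm{Im}(\tau) > \sqrt{3}/2p$, while $\mathrm{inside}(\gamma)$ is a half-disk of height at most $1/2p$; the complementary case $|\mathrm{Re}(\tau)| \leq 1/2p$ is simply declared clear. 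You instead prove the disk containment $\overline{\mathrm{inside}(\gamma)} \subseteq H_k$ for some integer $k \neq 0$ (where $H_k := \{\tau \in \mathbb H : |\tau - k/p| \leq 1/p\}$) via the criterion $|a - kc'| \leq c' - 1$, and then establish a covering claim for the disks $H_k$ themselves; both steps check out, including the sign argument guaranteeing $k \neq 0$. This buys you two things the paper leaves implicit. First, your $k \neq 0$ containment argument substantiates precisely the paper's unproved ``it is clear'' case: disks with $|c_\gamma| \geq 2p$ hovering near the cusp $0$ are swallowed by $H_{\pm 1}$, which is not obvious without a computation of exactly your kind. Second, your perpendicular-bisector step --- absorbing $H_{\pm([p/2]+1)}$ into $H_{\pm[p/2]}$ because $2[p/2] + 1 = p$, so that the line $\mathrm{Re}(\tau) = 1/2$ bisects the segment between the centers --- is exactly the point the paper glosses with ``Thus we see'' when it passes from the infinite family of conditions $|\tau - k/p| > 1/p$ for all $k$ prime to $p$ down to the finite list $|k| \leq [p/2]$: the disks at $|k| = [p/2] + 1$ genuinely protrude into the open strip, so this absorption is needed, and you are right to identify it as the crux and to note its dependence on the strict inequality $|\mathrm{Re}(\tau)| < 1/2$. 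What the paper's route buys is brevity and a direct parallel with the classical maximal-imaginary-part argument for $\mathrm{SL}_2(\mathbb Z)$; what yours buys is a uniform, purely disk-geometric argument with no glossed cases.
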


\begin{proof}
Note that
\[
\bigcap_{\gamma \in \Gamma_0(p) - \{ \pm I \} \atop c_\gamma = 0} \mathrm{outside}(\gamma) \ = \ \{ \tau \in \mathbb H \, \big| \, |\mathrm{Re}(\tau)| < 1/2 \}
\]
and that
\[
\bigcap_{\gamma \in \Gamma_0(p) - \{ \pm I \} \atop |c_\gamma| = p} \mathrm{outside}(\gamma) \ = \ \{ \tau \in \mathbb H \, \big| \, |\tau - k/p| > 1/p \mbox{ for all } k \in \mathbb Z \mbox{ with } (k, p) = 1 \}
\]
because
\[
\gamma \in \Gamma_0(p), \ c_\gamma = \pm p \ \Longrightarrow \ \mathrm{outside}(\gamma) \ = \ \{ \tau \in \mathbb H \, \big| \, |\tau \mp a_\gamma / p| > 1/p \}.
\]
Thus we see that
\[ \bigcap_{\gamma \in \Gamma_0(p) - \{ \pm I \} \atop |c_\gamma| \leq p} \mathrm{outside}(\gamma) \ = \ \{ \tau \in \mathbb H \, \big| \, |\mathrm{Re}(\tau)| < 1/2, \ |\tau - k/p| > 1/p \mbox{ for all } k = \pm 1, \pm 2, \ldots, \pm [p/2] \}. \]
Let $\tau$ be a point contained in the set right above. If $|\mathrm{Re}(\tau)| \leq 1 / 2p$, then it is clear that $\tau \in \mathrm{outside}(\gamma)$ for any $\gamma \in \Gamma_0(p)$ with $|c_\gamma| \geq 2p$. If $|\mathrm{Re}(\tau)| > 1 / 2p$, then we see that $\mathrm{Im}(\tau) > \sqrt{3}/2p$ and so we deduce that $\tau \in \mathrm{outside}(\gamma)$ for any $\gamma \in \Gamma_0(p)$ with $|c_\gamma| \geq 2p$ because every point in $\mathrm{inside}(\gamma)$ has its imaginary part less than $1 / 2p$.
\end{proof}

%

Let $S_p = \{ \pm 1, \pm 2, \ldots, \pm \frac{p - 1}{2} \}$. For any $x \in \mathbb Z$ with $(x, p) = 1$, we denote by $x^{-1} \in S_p$ (respectively, $\langle x \rangle \in S_p$) the unique integer such that $xx^{-1} \equiv 1 \pmod p$ (respectively, $x \equiv \langle x \rangle \pmod p)$. For any $k \in S_p$, we define
\[
\gamma_k \ = \ \begin{pmatrix}
k & \frac{kk^{-1} - 1}{p} \\ p & k^{-1}
\end{pmatrix} \ \in \ \Gamma_0(p).
\]
Then we have shown in the above lemma that
\[ R_{\Gamma_0(p)} \ = \ \{ \tau \in \mathbb H \, \big| \, |\mathrm{Re}(\tau)| < 1/2 \} \bigcap \ \big(
\bigcap_{k \in S_p} \mathrm{outside}(\gamma_k) \big). \]

Notice that for any $k \in S_p$, the matrix $\gamma_k$ maps $\mathrm{arc}(\gamma_{-k^{-1}})$ onto $\mathrm{arc}(\gamma_k)$ taking endpoints to endpoints as follows:
\[ \gamma_{k}\Big(\frac{-k^{-1} \pm 1}{p}\Big) \ = \ \frac{k \mp 1}{p}. \]
Observe also that an elliptic point of order $2$ can occur only at the top points of the arcs and that an elliptic point of order $3$ can occur only at the points where the lines and the arcs intersect, namely the points $\frac{k}{p} - \frac{1}{2p} + \frac{\sqrt{3}}{2p}i$ for $k \in S_p - \{ 1 \}$.

From these, we deduce that the top point of $\mathrm{arc}(\gamma_k)$ is an elliptic point of order $2$ if and only if $\mathrm{arc}(\gamma_{-k^{-1}})$ $=$ $\mathrm{arc}(\gamma_k)$ if and only if $k^2 \equiv -1 \pmod p$. So if we set
\[ E_p^{(2)} \ := \ \{ k \in S_p \, | \, k^2 \equiv -1 \pmod p \}, \]
then the points $\frac{k}{p} + \frac{1}{p}i$ with $k \in E_p^{(2)}$ are all the inequivalent elliptic points of order $2$ for $\Gamma_0(p)$. We further observe that for such a $k$, the matrix $\gamma_k$ maps $\mathrm{arc}(\gamma_k)$ onto itself with the direction reversed. Hence, if $\tau \in \partial R_{\Gamma_0(p)} \cap \mathrm{arc}(\gamma_k)$ for some $k \in E_p^{(2)}$, then we need to choose $\tau$ such that $\mathrm{Re}(\tau) \leq \frac{k}{p}$ in order to construct $\mathfrak F_{\Gamma_0(p)}$ satisfying the prescribed property (3). On the other hand, if $\tau \in \partial R_{\Gamma_0(p)} \cap (\mathrm{arc}(\gamma_{-k^{-1}}) \cup \mathrm{arc}(\gamma_k))$ for some $k \in S_p - E_p^{(2)}$, then we need to choose $\tau$ such that $\tau \in \mathrm{arc}(\gamma_{k_{(2)}})$, where
\[ k_{(2)} \ := \ \min \{ k, \, -k^{-1} \}. \]
For example, if $k = 1$, then $k_{(2)} = -1$ and so we discard the points $\tau$ with $|\tau - 1/p| = 1/p$.

Now it remains to determine which points $\frac{k}{p} - \frac{1}{2p} + \frac{\sqrt{3}}{2p}i$ with $k \in S_p - \{ 1 \}$ are equivalent to each other under $\Gamma_0(p)$. For any $k \in S_p$ we see by a straightforward computation that
\[ \gamma_{k}\Big( \frac{1 - k^{-1}}{p} - \frac{1}{2p} + \frac{\sqrt{3}}{2p}i \Big) \ = \ \frac{k}{p} - \frac{1}{2p} + \frac{\sqrt{3}}{2p}i. \]
So the point $\frac{k}{p} - \frac{1}{2p} + \frac{\sqrt{3}}{2p}i$ is $\Gamma_0(p)$-equivalent to the point $\frac{1 - k^{-1}}{p} - \frac{1}{2p} + \frac{\sqrt{3}}{2p}i$ for any $k \in S_p$. Notice that the map $f: S_p - \{ 1 \} \longrightarrow S_p - \{ 1 \}$ taking $k$ to $\langle 1 - k^{-1} \rangle$ satisfies $f^3 = \mathrm{id}$, from which we infer that for any $k \in S_p - \{ 1 \}$ we have either $f(k) = k$ or ($k \neq f(k)$ and $f(k) \neq f^2(k)$ and $k \neq f^2(k)$). We see that
\begin{eqnarray*}
f(k) = k & \Longleftrightarrow & k + k^{-1} \equiv 1 \pmod p \\
& \Longleftrightarrow & k + k^{-1} = 1 \mbox{ or } 1 - p \\
& \Longleftrightarrow & k + k^{-1} = 1
\end{eqnarray*}
because $|k + k^{-1}| \leq p - 2$ by the assumption $p \geq 5$. Thus if $k^2 - k + 1 \equiv 0 \pmod p$, then $\gamma_k$ fixes the point $\frac{k}{p} - \frac{1}{2p} + \frac{\sqrt{3}}{2p}i$. On the other hand, if $k^2 - k + 1 \not\equiv 0 \pmod p$, then we see that the three distinct points $\frac{k}{p} - \frac{1}{2p} + \frac{\sqrt{3}}{2p}i$, $\frac{\langle 1 - k^{-1} \rangle}{p} - \frac{1}{2p} + \frac{\sqrt{3}}{2p}i$, $\frac{\langle 1 - (1 - k^{-1})^{-1} \rangle}{p} - \frac{1}{2p} + \frac{\sqrt{3}}{2p}i$ are equivalent to each other under $\Gamma_0(p)$. Therefore, if we set
\[ E_p^{(3)} \ := \ \{ k \in S_p \, | \, k^2 - k + 1 \equiv 0 \ (\bmod \ p) \}, \]
then the points $\frac{k}{p} - \frac{1}{2p} + \frac{\sqrt{3}}{2p}i$ with $k \in E_p^{(3)}$ are all the inequivalent elliptic points of order $3$ for $\Gamma_0(p)$.
Moreover, if $\tau$ is one of the three points above for some $k \in S_p - (\{ 1 \} \cup E_p^{(3)})$, then $\tau$ must be chosen as $\tau = \frac{k_{(3)}}{p} - \frac{1}{2p} + \frac{\sqrt{3}}{2p}i$, where
\[ k_{(3)} \ := \ \min \{ k, \, \langle 1 - k^{-1} \rangle, \, \langle 1 - (1 - k^{-1})^{-1} \rangle \}. \]
For example, if $k = -1$, then $k_{(3)} = -\frac{p-1}{2}$ and so the point $\tau = -\frac{1}{2} + \frac{\sqrt{3}}{2p}i$ must be chosen and the points $\tau = \pm\frac{3}{2p} + \frac{\sqrt{3}}{2p}i$ must be discarded.

Combining all of the above discussion, we conclude that $\mathfrak F_{\Gamma_0(p)}$ is the set of all points $\tau \in \mathbb H$ satisfying the following properties:
\begin{enumerate}
\item $|\mathrm{Re}(\tau)| \leq 1/2$

\item $|\tau - k/p| \geq 1/p$ for all $k \in S_p$

\item $|\mathrm{Re}(\tau)| = 1/2 \Longrightarrow \mathrm{Re}(\tau) = -1/2$

\item $|\tau - 1/p| \neq 1/p$

\item $|\tau - k/p| = 1/p$ for some $k \in E_p^{(2)}$ $\Longrightarrow$ $\mathrm{Re}(\tau) \leq k/p$

\item $|\tau - k/p| = 1/p$ for some $k \in S_p - (\{ \pm 1 \} \cup E_p^{(2)})$ $\Longrightarrow$ $\mathrm{Re}(\tau) \leq (2k_{(2)} + 1)/ 2p$

\item $\tau \neq (2k - 1) / 2p + i\sqrt{3}/2p$ for all $k \in S_p - (\{ 1 \} \cup E_p^{(3)})$ with $k \neq k_{(3)}$.
\end{enumerate}


%

\end{document}